\theoremstyle{plain}
\newtheorem{theorem}{Theorem}
\newtheorem{proposition}{Proposition}
\newtheorem{lemma}{Lemma}
\newtheorem{definition}{Definition}
\newtheorem{corollary}{Corollary}
\theoremstyle{remark}
\newtheorem{remark}{Remark}
\numberwithin{equation}{section}
\numberwithin{lemma}{section}
\DeclareMathOperator{\sgn}{Sgn}
\begin{document}

\title[]{A Grobman--Hartman theorem for a differential equation with piecewise constant generalized argument}
\author[]{Manuel Pinto}
\author[]{Gonzalo Robledo}
\address{Departamento de Matem\'aticas, Facultad de Ciencias, Universidad de
  Chile, Casilla 653, Santiago, Chile}
\email{pintoj@uchile.cl,grobledo@uchile.cl}
\thanks{This work was supported by FONDECYT  1120709}
\subjclass{34A30,34D09,34K34}
\keywords{Differential equations, piecewise constants arguments, topological equivalence, exponential dichotomy}
\date{June 2015}

\begin{abstract}
We obtain sufficient conditions ensuring the existence of a uniformly continuous and H\"older continuous homeomorphism between 
the solutions of a
linear system of differential equations with piecewise constant argument of generalized type and the solutions of 
the quasilinear corresponding system. We use a definition (recently introduced by M. Akhmet) of exponential 
dichotomy for those systems combined with technical assumptions on the nonlinear part. Our result generalizes
a previous work of G. Papaschinopoulos.
\end{abstract}

\maketitle

\section{Introduction}
The purpose of this article is to study the strong topological equivalence (see \emph{e.g.}, \cite{Jiang,Jiang1,Palmer,Shi} for definitions) 
between the solutions of the linear differential equation with piecewise constant arguments of generalized type:
\begin{equation}
\label{lineal-efes}
\begin{array}{lcl}
\dot{y}(t)&=&A(t)y(t)+A_{0}(t)y(\gamma(t)),
\end{array}
\end{equation}
and the family of nonlinear systems
\begin{equation} 
\label{sistema1}
\begin{array}{lcl}
\dot{x}(t)&=&A(t)x(t)+A_{0}(t)x(\gamma(t))+f(t,x(t),x(\gamma(t))), 
\end{array}
\end{equation}
provided that (\ref{lineal-efes}) admits an exponential dichotomy, the 
matrices $A(\cdot)$ and $A_{0}(\cdot)$ and 
$f\colon \mathbb{R}\times\mathbb{R}^{n}\times\mathbb{R}^{n}\to \mathbb{R}^{n}$ are such that
\begin{itemize}
 \item[\textbf{(A1)}] There exist positive constants $M$ and $M_{0}$ such that 
 \begin{displaymath}
 \sup\limits_{-\infty<t<+\infty}||A(t)|| \leq M \quad \textnormal{and} \quad \sup\limits_{-\infty<t<+\infty}||A_{0}(t)|| \leq M_{0},
 \end{displaymath}
 where $||\cdot||$ denotes a matrix norm,
 \item[\textbf{(A2)}] there exists a positive constant $\mu$  such that
 \begin{displaymath}
|f(t,x,y)|\leq \mu \quad \textnormal{for any} \quad (t,x,y)\in \mathbb{R}\times\mathbb{R}^{n}\times \mathbb{R}^{n}, 
\end{displaymath} 
where $|\cdot|$ denotes a vector norm.
 \item[\textbf{(A3)}] there exist positive constants $\ell_{1}$ and $\ell_{2}$ such that if $x,x',y,y'\in \mathbb{R}^{n}$
\begin{displaymath}
|f(t,x,y)-f(t,x',y')| \leq \ell_{1}|x-x'|+\ell_{2}|y-y'| \quad \textnormal{for any} \quad t\in \mathbb{R}.
\end{displaymath}
\end{itemize}

The study of systems with piecewise constant arguments begin with Myshkis \cite{Myshkis}, which considers $\gamma(t)=[t]$
(the integer part), this case and other variations were usually known as DEPCA (Differential Equations with 
Piecewise Constant Argument) in the literature. A generalization was made by Akhmet \cite{Akhmet-2011}, which introduces the 
DEPCAG (Differential Equations with Piecewise Constant Generalized Argument) by considering two sequences $\{t_{i}\}_{i\in \mathbb{Z}}$ and $\{\zeta_{i}\}_{i\in \mathbb{Z}}$, which satisfy: 
\begin{itemize}
\item[\textbf{(B1)}] $t_{i}<t_{i+1}$ and $t_{i}\leq \zeta_{i}\leq t_{i+1}$ for any $i\in \mathbb{Z}$,
\item[\textbf{(B2)}] $t_{i}\to \pm \infty$ as $i\to \pm \infty$,
\item[\textbf{(B3)}] $\gamma(t)=\zeta_{i}$ for $t\in [t_{i},t_{i+1})$, 
\item[\textbf{(B4)}] there exists a constant $\theta>0$ such that 
$$
t_{i+1}-t_{i}=\theta_{i}\leq \theta, \quad \textnormal{for any} \quad i\in \mathbb{Z}.
$$
\end{itemize} 

There exists an intensive theoretical research in DEPCAG equations (see, for instance, the monographies \cite{Akhmet-2011,Dai,Wiener}),
which has been accompanied with applications in engineering, life sciences and numerical analysis of ODE--DDE 
systems \cite{Akhmet-2013,Chiu,Gyori,Huang,Nakata,Pinto1,Pinto-Robledo,Seuret,Veloz,Yuan-Favard}.

\subsection{Topological equivalence}
The concept of topological equivalence was introduced by Palmer in \cite{Palmer} and can be seen as a generalization 
of the well known Grobman--Hartman's theorem to a nonautonomous framework.

\begin{definition}
\label{TopEq}
The systems \textnormal{(\ref{lineal-efes})} and \textnormal{(\ref{sistema1})} are topologically equivalent
if there exists a function $H\colon \mathbb{R}\times \mathbb{R}^{n}\to \mathbb{R}^{n}$ with the properties
\begin{itemize}
\item[(i)] For each fixed $t\in \mathbb{R}$, $u\mapsto H(t,u)$ is an homeomorphism of $\mathbb{R}^{n}$,
\item[(ii)] $H(t,u)-u$ is bounded in $\mathbb{R}\times \mathbb{R}^{n}$,
\item[(iii)] if $x(t)$ is a solution of \textnormal{(\ref{sistema1})}, then $H[t,x(t)]$ is a solution 
of \textnormal{(\ref{lineal-efes})},
\end{itemize}
In addition, the function $L(t,u)=H^{-1}(t,u)$ has properties \textnormal{(i)--(iii)} also.
\end{definition}

The concept of strongly topologically equivalence was introduced by Shi and Xiong \cite{Shi}, who realized that, in several
examples of topological equivalence, the maps $u\mapsto H(t,u)$ and $u\mapsto L(t,u)$ could have properties sharper than continuity.

\begin{definition}
\label{StrTopEq}
The systems \textnormal{(\ref{lineal-efes})} and \textnormal{(\ref{sistema1})} are strongly topologically equivalent
if they are topologically equivalent and $H$ and $L$ are uniformly continuous for all $t$.
\end{definition}

\subsection{Exponential dichotomy}
The exponential dichotomy property can be viewed as a generalization of the hiperbolicity property of linear
autonomous systems and plays an important role in the study of linear systems.

\begin{definition}\textbf{(}see \cite{Coppel}\textbf{)}
\label{eidi}
The system
\begin{equation}
\label{ode2}
x'=A(t)x
\end{equation}
has an $\widetilde{\alpha}$--exponential dichotomy if there exists a projection $P$ ($P^{2}=P$) and two constants 
$\tilde{K}\geq 1$,$\tilde{\alpha}>0$ such that $\Phi(t)$, the Cauchy matrix of \textnormal{(\ref{ode2})}, satisfies
 \begin{equation}
\label{edest}
\left\{\begin{array}{rcl}
||\Phi(t)P\Phi^{-1}(s)||\leq \tilde{K}e^{-\tilde{\alpha}(t-s)} & \textnormal{if} & t\geq s\\
||\Phi(t)(I-P)\Phi^{-1}(s)||\leq \tilde{K}e^{-\tilde{\alpha}(s-t)} &\textnormal{if}& s > t.
\end{array}\right.
\end{equation}
\end{definition}

There are not a univoque definition of exponential dichotomy in a DEPCAG framework. The main dificulty is that
the transition matrix $Z(t,\tau)$ of (\ref{lineal-efes}) can be constructed only when certain technical conditions
are satisfied (see section 2). We will consider two definitions:
\begin{definition} \textnormal{(}Akhmet \cite{Akhmet-2012,Akhmet-2013}\textnormal{)}
\label{ED1}
The linear DEPCAG \textnormal{(\ref{lineal-efes})} has an $\alpha$--exponential dichotomy on $(-\infty,\infty)$ if there exists a projection 
$P$ and some constants $K\geq 1$ and $\alpha>0$, such that its transition matrix $Z(t,s)$ verifies
\begin{equation}
\label{ED-Ineg}
||Z_{p}(t,s)|| \leq Ke^{-\alpha|t-s|} 
\end{equation}
where $Z_{p}(t,s)$ is defined by
\begin{equation}
\label{FG}
Z_{p}(t,s)=\left\{\begin{array}{rcl}
Z(t,0)PZ(0,s) & \textnormal{if} & t \geq s\\
-Z(t,0)\{I-P\}Z(0,s) &\textnormal{if}& s > t.
\end{array}\right.
\end{equation}
\end{definition}

\begin{definition}
\label{EDP}
 The linear DEPCAG \textnormal{(\ref{lineal-efes})} has an exponential dichotomy on $(-\infty,\infty)$ if the system of difference equations
\begin{equation}
\label{difference}
y_{n+1}=Z(t_{n+1},t_{n})y_{n} 
\end{equation}
has a discrete exponential dichotomy, which means that there exists a projection $\hat{P}$, $\hat{K}\geq 1$ and $0<r<1$ such that $Y_{n}$, the
Cauchy matrix of \textnormal{(\ref{difference})} verifies
\begin{displaymath}
\left\{\begin{array}{rcl}
||Y_{n}\hat{P}Y_{m}^{-1}||\leq \hat{K}r^{n-m} & \textnormal{if} & n\geq m\\
||Y_{n}\{I-\hat{P}\}Y_{m}^{-1}||\leq \hat{K}r^{m-n} &\textnormal{if}& m>n.
\end{array}\right.
\end{displaymath}
\end{definition}

\begin{remark}
\label{about-dic}
Notice that:
\begin{itemize}
\item[i)] Definition \ref{ED1} has been recently introduced by Akhmet in \cite{Akhmet-2012,Akhmet-2013} in order to study
the existence of almost periodic solutions of almost periodic perturbations of (\ref{lineal-efes}). Definition \ref{EDP} is 
employed in \cite{Castillo} with similar purposes. It is important to note that Definition \ref{ED1} is oriented 
to a global treatement of (\ref{lineal-efes}) while Definition \ref{EDP} allows the reduction to (\ref{difference}).\\
\item[ii)] A particular but distinguished case of Definition \ref{EDP} restricted to $\gamma(t)=[t]$ was previously introduced by 
Papaschinopoulos \cite{Papaschinopoulos,Papaschinopoulos2}.\\
\item[iii)] Definitions \ref{ED1} and \ref{EDP} are independent and none implies the other. A deeper study about the relationship 
between definitions above remains to be done. Some preliminar comparative examples are presented in \cite{Castillo}.
\end{itemize}
\end{remark}

\subsection{Background and developments}
The seminal paper of Palmer \cite{Palmer} proves that if (\ref{ode2}) has an exponential dichotomy (\ref{edest}) and the perturbed system
\begin{equation}
\label{ode1}
x'=A(t)x+f(t,x),
\end{equation}
satisfies
\begin{equation}
\label{estimaciones}
|f(t,x)|\leq \tilde{\mu} \quad \textnormal{and} \quad |f(t,x_{1})-f(t,x_{2})|\leq \tilde{\ell}|x_{1}-x_{2}| \quad 
\textnormal{for all}\quad t,x,x_{1},x_{2},
\end{equation}
then (\ref{ode2}) and (\ref{ode1}) are topologically equivalent provided that $2\tilde{\ell} \tilde{K} \leq \tilde{\alpha}$.

Palmer's result of topological equivalence has been generalized in several directions: ordinary differential 
equations \cite{Barreira,Jiang,Jiang1,Shi}, difference equations \cite{BV-06,Castaneda,Kurzweil-Papas,Papas2},
impulsive equations \cite{Pinto-Fenner,Xia-2013} and time-scales systems \cite{Potzche,Xia}.

In a DEPCA framework, there exists a result of topological equivalence obtained by G. Papaschinopoulos 
\cite[Proposition 1]{Papaschinopoulos} for the special case $\gamma(t)=[t]$ by following the lines of the Palmer's work
and introducing its \emph{ad-hoc} definition of exponential dichotomy for (\ref{lineal-efes}).

This work generalizes the topological equivalence result of \cite{Papaschinopoulos} in several directions. Firstly, we consider a general piecewise constant argument of advanced/delayed type. Secondly, we obtain conditions for strongly and H\"older strongly topological equivalence. Thirdly, instead of Papaschinopoulos's definition of exponential dichotomy of (\ref{lineal-efes}), we use Definition \ref{ED1}, which allows a global treatment and considers limit cases that cannot be trated by the Papaschinopoulos's definition. More technical generalizations will be explained later. 

\subsection{Outline} Section 2 introduces technical notation, recalls the variation
of parameters formula presented in \cite{Pinto} and states a 
result (Theorem \ref{boundeness}) about existence and uniqueness of bounded solutions for bounded perturbations of (\ref{lineal-efes}).
Section 3 states the two main results (Theorems \ref{intermedio} and \ref{intermedio2}) of stronlgy topological equivalence. Sections 4 and 5 state technical intermediate 
results. The proof of the main results is finished in section 6. 
\section{Technical preliminaries}
In order to make the article self--contained, we will recall some previous notation and results obtained in \cite{Pinto}.
\begin{definition}\cite{Akhmet-2013,Wiener}
A continuous function $u(t)$ is solution of \textnormal{(\ref{lineal-efes})} or \textnormal{(\ref{sistema1})}
if:
\begin{itemize}
\item[(i)] The derivative $u'(t)$ exists at each point $t\in \mathbb{R}$ with the possible exception of the 
points $t_{i}$, $i\in \mathbb{Z}$, where the one side derivatives exists;
\item[(ii)] The equation is satisfied for $u(t)$ on each interval $(t_{i},t_{i+1})$, and it holds for the right
derivative of $u(t)$ at the points $t_{i}$. 
\end{itemize}
\end{definition}

Without loss of generality, we will assume that the Cauchy matrix of (\ref{ode2}) satisfies $\Phi(0)=I$. As usual, the transition matrix related to $A(t)$ will be denoted by $\Phi(t,s)=\Phi(t)\Phi^{-1}(s)$.

In \cite{Akhmet-2011,Pinto}, the following $n\times n$ matrices are introduced:
\begin{equation}
\label{simbolo-0}
J(t,\tau)= I+\int_{\tau}^{t}\Phi(\tau,s)A_{0}(s)\,ds,
\end{equation}
\begin{equation}
\label{simbolo}
E(t,\tau)= \Phi(t,\tau)+\int_{\tau}^{t}\Phi(t,s)A_{0}(s)\,ds=\Phi(t,\tau)J(t,\tau).
\end{equation}

Given a set of $n\times n$ matrices $\mathcal{Q}_{k}$ ($k=1,\ldots,m$), we will consider the product in the backward and
 forward sense as follows:
\begin{displaymath}
\prod\limits_{k=1}^{\leftarrow m}\mathcal{Q}_{k}=\left\{
\begin{array}{cl}
\mathcal{Q}_{m}\cdots \mathcal{Q}_{2}\mathcal{Q}_{1} &\quad \textnormal{if} \quad m\geq 1\\
I & \quad \textnormal{if} \quad m<1.
\end{array}\right.
\end{displaymath}
and
\begin{displaymath}
\prod\limits_{k=1}^{\rightarrow m}\mathcal{Q}_{k}=\left\{
\begin{array}{cl}
\mathcal{Q}_{1}\mathcal{Q}_{2}\cdots \mathcal{Q}_{m} &\quad \textnormal{if} \quad m\geq 1\\
I & \quad \textnormal{if} \quad m<1.
\end{array}\right.
\end{displaymath}

\subsection{Notation and facts related to the sequences $\{t_{i}\}$ and $\{\zeta_{i}\}$}
The following notation will be useful:
\begin{enumerate}
 \item[$\bullet$] For any $k\in \mathbb{Z}$, we define $I_{k}=[t_{k},t_{k+1})$, $I_{k}^{+}=[t_{k},\zeta_{k}]$ and
$I_{k}^{-}=[\zeta_{k},t_{k+1})$.
 \item[$\bullet$] For any $t\in \mathbb{R}$, we define $i(t)\in \mathbb{Z}$ as the unique integer
 such that $t\in I_{i}=[t_{i},t_{i+1})$.
 \item[$\bullet$] The number of the terms of the sequence $\{t_{i}\}$ contained in the interval $(\tau,t)$ will be denoted by $i(\tau,t)$.
 \item[$\bullet$] For any $k\in \mathbb{Z}$ and any matrix $t\mapsto Q(t)\in M_{n}(\mathbb{R})$, we define the numbers:
\begin{displaymath}
\rho_{k}^{+}(Q)=\exp\Big(\int_{t_{k}}^{\zeta_{k}}|Q(s)|\,ds\Big), \quad \textnormal{and} \quad\rho_{k}^{-}(Q)=\exp\Big(\int_{\zeta_{k}}^{t_{k+1}}|Q(s)|\,ds\Big).
\end{displaymath}
\end{enumerate}

Some examples of functions $\gamma(t)$ and its corresponding sequences $\{t_{k}\}$ and $\{\zeta_{k}\}$ 
satisfying \textbf{(B1)--(B4)} are summarized in the following 
table (see \cite{Wiener} for details):

\begin{center}
\begin{tabular}{|c|c|c|l|l|}
\hline
$\gamma(t)$ & $\{t_{k}\}$ & $\{\zeta_{k}\}$ & Restrictions & Comments \\
\hline
$[t]$ & $k$ &  $k$ &  & completely delayed\\
\hline
$[t-j]$ & $k$ & $k-j$ & $j\in \mathbb{Z}^{+}$ & completely delayed\\
\hline
$[t+j]$ & $k$ & $k+j$ & $j\in \mathbb{Z}^{+}$ & completely advanced\\
\hline
$[t+1/2]$ & $k$ & $k+1/2$&  & advanced/delayed\\
\hline
$2[(t+1)/2]$ & $2k$& $2k+1$ & & advanced/delayed\\
\hline
$\alpha h[t/(\alpha h)]$ & $k\alpha h$& $k\alpha h$ & $\alpha>0$,$h>0$ & completely delayed\\
\hline
$m[(t+j)/m]$ &  $mk-j$ & $mk$ & $m>j>0$ &  advanced/delayed\\
\hline
\end{tabular}
\end{center}
\bigskip

It is interesting to point out that the last two examples are functions $t\mapsto \gamma(t)$ 
employed in DEPCAG equations while the previous ones are classical examples used in DEPCA equations.
The qualitative difference is that, in the first examples, the sequences $\{t_{k}\}$ and $\{\zeta_{k}\}$
are strictly determined, while in last cases they are dependent of the parameters $\alpha$ and $m$ respectively, which induce 
$\alpha$--parameter (resp. $m$--parameter) dependent families of sequences $\{t_{k}\}$ and $\{\zeta_{k}\}$. 

\begin{lemma}
\label{EUUG}
For any $s$ and $t$, it follows that
\begin{equation}
\label{borne}
|\gamma(s)-t|\leq \theta + |t-s|,
\end{equation}
where $\theta$ is the same stated in \textnormal{\textbf{(B4)}}.
\end{lemma}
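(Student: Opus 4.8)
The plan is to reduce the whole estimate to a single geometric observation: the value $\gamma(s)$ always lies in the \emph{same} interval of the partition $\{t_i\}$ as the point $s$ itself, so that $\gamma(s)$ and $s$ can differ by no more than the length of that interval, which is controlled by $\theta$ via \textbf{(B4)}. Once this is established, the claimed bound follows from a one-line application of the triangle inequality.

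First I would fix $s\in\mathbb{R}$ and invoke \textbf{(B2)} to pick the unique integer $i=i(s)$ with $s\in I_{i}=[t_{i},t_{i+1})$. By the definition of the generalized argument in \textbf{(B3)} we have $\gamma(s)=\zeta_{i}$, and by \textbf{(B1)} the point $\zeta_{i}$ satisfies $t_{i}\leq\zeta_{i}\leq t_{i+1}$. Consequently both $s$ and $\gamma(s)=\zeta_{i}$ belong to the closed interval $[t_{i},t_{i+1}]$.

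Next I would use \textbf{(B4)}: since this interval has length $t_{i+1}-t_{i}=\theta_{i}\leq\theta$ and both $s$ and $\gamma(s)$ lie inside it, I obtain the key localization estimate
\begin{displaymath}
|\gamma(s)-s|\leq\theta_{i}\leq\theta .
\end{displaymath}
Finally, the triangle inequality yields
\begin{displaymath}
|\gamma(s)-t|\leq|\gamma(s)-s|+|s-t|\leq\theta+|t-s|,
\end{displaymath}
which is exactly \eqref{borne}, and since $s$ and $t$ were arbitrary the proof is complete.

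There is no serious obstacle here; the statement is essentially a bookkeeping lemma. The only point requiring genuine care — though it is elementary — is the localization step, namely the verification that $\gamma(s)$ and $s$ share the same interval $[t_{i},t_{i+1}]$. This is immediate from \textbf{(B1)}--\textbf{(B3)}, but it is the substantive content of the bound; everything else is the triangle inequality together with the uniform mesh bound \textbf{(B4)}.
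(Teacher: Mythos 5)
Your proof is correct and follows essentially the same route as the paper: identify the interval $[t_{i(s)},t_{i(s)+1})$ containing $s$, observe via \textbf{(B1)}--\textbf{(B3)} that $\gamma(s)=\zeta_{i(s)}$ lies in the same interval so that $|\gamma(s)-s|\leq\theta_{i(s)}\leq\theta$ by \textbf{(B4)}, and conclude with the triangle inequality. The only cosmetic difference is that the paper writes out the chain of inequalities bounding $\gamma(s)-s$ explicitly, whereas you phrase it as a localization of both points to a common interval of length at most $\theta$.
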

\begin{proof}
As $s\in [t_{i(s)},t_{i(s)+1})$, it follows that $\gamma(s)=\zeta_{i(s)}$. Now \textbf{(B1)} implies that
\begin{displaymath}
t_{i(s)}-t_{i(s)+1}\leq \zeta_{i(s)}-t_{i(s)+1}<\gamma(s)-s<\zeta_{i(s)}-t_{i(s)}<t_{i(s)+1}-t_{i(s)} 
\end{displaymath}
and \textbf{(B4)} implies that $|\gamma(s)-s|\leq \theta$.

Finally, (\ref{borne}) follows from $|\gamma(s)-t|\leq |\gamma(s)-s|+|s-t|$.
\end{proof}

\subsection{Complementary assumptions about $A$ and $A_{0}$}
Throughout this article, we will asume that
\begin{itemize}
 \item[\textbf{(C)}] There exists $\nu^{+}>0$ and $\nu^{-}>0$ such that the 
matrices $A(t)$ and $A_{0}(t)$ satisfy the properties:
\begin{equation}
\label{prop-mat2}
\sup\limits_{k\in \mathbb{Z}}\rho_{k}^{+}(A)\ln\rho_{k}^{+}(A_{0})\leq \nu^{+}<1 \quad \textnormal{and} \quad
\sup\limits_{k\in \mathbb{Z}}\rho_{k}^{-}(A)\ln\rho_{k}^{-}(A_{0})\leq \nu^{-}<1.
\end{equation}
\end{itemize}

Notice that \textbf{(A1)} and \textbf{(B4)} imply that
 \begin{equation}
\label{prop-mat1}
\rho(A)=\sup\limits_{k\in \mathbb{Z}}\rho_{k}^{+}(A)\rho_{k}^{-}(A)<+\infty.
\end{equation}

An important consequence of \textbf{(C)} is the following result:
\begin{lemma}\cite[Lemma 4.3]{Pinto}
\label{est-mt}
If \textnormal{(\ref{prop-mat2})} is verified, it follows that
\begin{displaymath}
|\Phi(t,s)|\leq \rho(A) \quad \textnormal{for any} \quad t,s\in I_{i}.
\end{displaymath}
and $J(t,s)$ is nonsingular for any $t,s\in I_{i}$. 
\end{lemma}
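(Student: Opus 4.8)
The plan is to prove the two assertions separately: first the uniform bound on $|\Phi(t,s)|$, then the nonsingularity of $J(t,s)$.

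For the bound I would use the elementary Gronwall estimate for the Cauchy matrix of (\ref{ode2}) in both time directions. Since, for a fixed vector $v$, the function $y(\tau)=\Phi(\tau,s)v$ solves $\dot y=A(\tau)y$, the differential inequality $\big|\tfrac{d}{d\tau}|y(\tau)|\big|\le|A(\tau)|\,|y(\tau)|$ integrates, for any $t,s$ in the same interval $I_i$, to
\[
|\Phi(t,s)|\le \exp\Big(\Big|\int_s^t|A(u)|\,du\Big|\Big),
\]
the case $t<s$ being covered by integrating the same inequality backward. Because $[\min(s,t),\max(s,t)]\subseteq[t_i,t_{i+1}]$, the exponent is at most $\int_{t_i}^{t_{i+1}}|A(u)|\,du=\ln\rho_i^{+}(A)+\ln\rho_i^{-}(A)$, so $|\Phi(t,s)|\le\rho_i^{+}(A)\rho_i^{-}(A)\le\rho(A)$ by the definition in (\ref{prop-mat1}). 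This step is routine.

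For the nonsingularity I would show that $|J(t,\tau)-I|<1$ and conclude by the Neumann series that $J(t,\tau)=I+\big(J(t,\tau)-I\big)$ is invertible. From (\ref{simbolo-0}),
\[
|J(t,\tau)-I|=\Big|\int_\tau^t\Phi(\tau,s)A_0(s)\,ds\Big|.
\]
The decisive point is to estimate this integral \emph{not} through the crude bound $|\Phi(\tau,s)|\le\rho(A)$, but by splitting the range at $\zeta_i$ and using on each side the sharp one–sided estimate. With the base point $\tau=\zeta_i$ (which is exactly the case needed to build the transition matrix of (\ref{lineal-efes}) on $I_i$), for $s\in I_i^{+}$ one has $|\Phi(\zeta_i,s)|\le\rho_i^{+}(A)$ and for $s\in I_i^{-}$ one has $|\Phi(\zeta_i,s)|\le\rho_i^{-}(A)$, so the contribution of $I_i^{\pm}$ is controlled by $\rho_i^{\pm}(A)\int_{I_i^{\pm}}|A_0|=\rho_i^{\pm}(A)\ln\rho_i^{\pm}(A_0)$. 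This is precisely the matched product appearing in \textbf{(C)}, giving a bound $\le\nu^{\pm}<1$ on each piece, hence $|J(t,\zeta_i)-I|\le\max\{\nu^{+},\nu^{-}\}<1$.

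The main obstacle is exactly this \emph{matching}: assumption \textbf{(C)} pairs $\rho_i^{+}(A)$ with $\ln\rho_i^{+}(A_0)$ and $\rho_i^{-}(A)$ with $\ln\rho_i^{-}(A_0)$, and the estimate only closes if the transition matrix is referred to the base point $\zeta_i$ and integrated over a single side $I_i^{+}$ or $I_i^{-}$. A careless use of the global bound $\rho(A)$, or an integration that straddles $\zeta_i$ from a base point away from $\zeta_i$, introduces a spurious extra factor $\rho_i^{\pm}(A)\ge1$ and fails to produce a constant $<1$. Thus the heart of the argument is to organize the computation so that each one–sided integral carries the correctly matched exponential factor, after which (\ref{prop-mat2}) delivers the estimate $<1$ and the invertibility of $J$ follows.
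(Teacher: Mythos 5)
First, a point of comparison: the paper does not prove Lemma \ref{est-mt} at all --- it is imported verbatim from \cite[Lemma 4.3]{Pinto} --- so there is no internal proof to measure you against; your argument has to stand on its own. Your first half does: the two-sided Gronwall estimate gives $|\Phi(t,s)|\le\exp\big(\int_{t_i}^{t_{i+1}}|A(u)|\,du\big)=\rho_i^{+}(A)\rho_i^{-}(A)\le\rho(A)$ for $t,s\in I_i$, and this indeed uses only \textbf{(A1)} and \textbf{(B4)} via (\ref{prop-mat1}), not (\ref{prop-mat2}). Your reading of why hypothesis \textbf{(C)} is shaped as a \emph{matched} product $\rho_i^{\pm}(A)\ln\rho_i^{\pm}(A_0)$ is also exactly right, and the Neumann-series estimate $|J(t,\zeta_i)-I|\le\nu^{\pm}<1$ is correct as far as it goes.

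The gap is that the lemma asserts nonsingularity of $J(t,s)$ for \emph{any} $t,s\in I_i$, whereas your estimate only closes when the base point is $\zeta_i$ (or, by the same computation, when $t$ and $s$ lie on the same side of $\zeta_i$). In the straddling case $s\in[t_i,\zeta_i)$, $t\in(\zeta_i,t_{i+1})$, splitting $\int_s^t\Phi(s,u)A_0(u)\,du$ at $\zeta_i$ gives at best $\nu^{+}+\rho_i^{+}(A)\,\nu^{-}$, and the spurious factor $\rho_i^{+}(A)\ge 1$ that you yourself warn about cannot be removed by reorganizing the integral, since the base point $s$ is genuinely away from $\zeta_i$; so $|J(t,s)-I|<1$ is simply not available there and the Neumann argument does not cover the full statement. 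You flag this and argue that $\tau=\zeta_i$ is "the case needed", which is true --- every inverse appearing in the transition matrix (\ref{transition2})--(\ref{transition2-inv}) is of the form $E(\cdot,\zeta_k)^{-1}$ --- but as a proof of the lemma as stated it is incomplete: either you must supply a different argument for the straddling pairs (e.g. a factorization of $J(t,s)$ through $\zeta_i$, which does not come for free because $E$ is not a cocycle), or you should explicitly weaken the conclusion to nonsingularity of $J(t,\zeta_i)$ for $t\in I_i$ and note that this suffices for Proposition \ref{pitaron}.
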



\subsection{Variation of parameters formula}
Throughout the rest of this section, it will be assumed that \textbf{(A)},\textbf{(B)} and \textbf{(C)} are satisfied.

A distinguished feature of DEPCAG systems is that their solutions could be noncontinuable in several cases. In this context, the 
condition \textbf{(C)} is introduced in \cite{Pinto} in order to provide sufficient conditions ensuring the continuability
of the solutions of (\ref{lineal-efes}) to $(-\infty,+\infty)$. Furthermore, condition \textbf{(C)} and Lemma \ref{est-mt}
imply that $J(t,s)$ and $E(t,s)$ are nonsingular for any $t,s\in I_{i}$, which allow to construct the transition matrix 
for (\ref{lineal-efes}) and to derive the variation of parameters formula.

\begin{proposition}\cite[p.239]{Pinto}
\label{pitaron}
For any $t\in I_{j}$,$\tau\in I_{i}$, the solution of \textnormal{(\ref{lineal-efes})} with $z(\tau)=\xi$
is defined by
$$
z(t)=Z(t,\tau)\xi,
$$
where $Z(t,\tau)$ is defined by
\begin{equation}
\label{transition2}
\begin{array}{rcl}
Z(t,\tau)&=&E(t,\zeta_{j})E(t_{j},\zeta_{j})^{-1}\prod\limits_{k=i+2}^{\leftarrow j}E(t_{k},\gamma(t_{k-1}))E(t_{k-1},\gamma(t_{k-1}))^{-1}\\\\
&& E(t_{i+1},\gamma(\tau))E(\tau,\gamma(\tau))^{-1},
\end{array}
\end{equation}
when $t>\tau$ and by
\begin{equation}
\label{transition2-inv}
\begin{array}{rcl}
Z(t,\tau)&=&E(t,\zeta_{j})E(t_{j},\zeta_{j})^{-1}\prod\limits_{k=i+2}^{\rightarrow j}E(t_{k},\gamma(t_{k}))E(t_{k},\gamma(t_{k-1}))^{-1}\\\\
&& E(t_{i},\gamma(\tau))E(\tau,\gamma(\tau))^{-1},
\end{array}
\end{equation}
when $t<\tau$.
\end{proposition}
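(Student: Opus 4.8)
The plan is to reduce the evaluation of $Z(t,\tau)$ to a single interval on which the deviating argument $\gamma$ is constant, to integrate the resulting inhomogeneous linear equation in closed form, and then to splice the local solutions together by continuity at the nodes $t_k$. On the interval $I_i=[t_i,t_{i+1})$ assumption \textbf{(B3)} gives $\gamma(s)\equiv\zeta_i$, so (\ref{lineal-efes}) reduces to $\dot y=A(t)y+A_0(t)\,y(\zeta_i)$, an inhomogeneous linear system whose inhomogeneity $y(\zeta_i)$ is a \emph{fixed} vector. Applying the classical variation of parameters formula for $\dot y=A(t)y$ with base point $\zeta_i$ and pulling the constant vector out of the integral, I obtain for every $t\in\overline{I_i}$
\[
y(t)=\Phi(t,\zeta_i)\,y(\zeta_i)+\Big(\int_{\zeta_i}^{t}\Phi(t,s)A_0(s)\,ds\Big)y(\zeta_i)=E(t,\zeta_i)\,y(\zeta_i),
\]
where the last equality is exactly the definition (\ref{simbolo}) of $E$. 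This identity, which holds up to $t=t_{i+1}$ by continuity of the solution, is the fundamental building block: it transports the solution between any point of $I_i$ and the single deviating point $\zeta_i=\gamma|_{I_i}$.

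Next I would invert the building block. Taking $t=\tau\in I_i$ gives $y(\tau)=E(\tau,\gamma(\tau))\,y(\gamma(\tau))$, and Lemma \ref{est-mt} guarantees that $J(\tau,\gamma(\tau))$ — hence $E(\tau,\gamma(\tau))=\Phi(\tau,\gamma(\tau))J(\tau,\gamma(\tau))$, since $\Phi$ is always invertible — is nonsingular. Therefore $y(\gamma(\tau))=E(\tau,\gamma(\tau))^{-1}y(\tau)$, and substituting back yields the one--interval transition
\[
y(t)=E(t,\gamma(\tau))\,E(\tau,\gamma(\tau))^{-1}\,y(\tau),\qquad t,\tau\in\overline{I_i}.
\]
Specialising $t=t_{i+1}$ produces precisely the rightmost factor $E(t_{i+1},\gamma(\tau))E(\tau,\gamma(\tau))^{-1}$ of (\ref{transition2}), while specialising $\tau=t_j$ produces its leftmost factor $E(t,\zeta_j)E(t_j,\zeta_j)^{-1}$.

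Finally I would assemble the global formula by concatenation over successive intervals. Since a solution is continuous at each node $t_k$, the value $y(t_k)$ obtained as the left--hand limit from $I_{k-1}$ coincides with the initial value on $I_k$; hence applying the building block on $I_{k-1}$ from $t_{k-1}$ to $t_k$ gives exactly the interior factor $E(t_k,\gamma(t_{k-1}))E(t_{k-1},\gamma(t_{k-1}))^{-1}$. For $\tau\in I_i$ and $t\in I_j$ with $t>\tau$, an induction on $i(\tau,t)$, the number of nodes in $(\tau,t)$, then telescopes the one--interval maps on $I_i,I_{i+1},\dots,I_j$ into the ordered backward product (\ref{transition2}); one checks directly, e.g. for $i=0$, $j=2$, that the exit factor through the right node $t_{i+1}$, the interior products through each $t_k$, and the entry factor through the left node $t_j$ reproduce the stated expression. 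The case $t<\tau$ is obtained by running the same construction in the opposite orientation — equivalently by inverting the forward transition via $Z(t,\tau)=Z(\tau,t)^{-1}$, which is legitimate once uniqueness of solutions is invoked — and gives the forward product (\ref{transition2-inv}).

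I expect the genuine difficulty to lie entirely in the endpoint and index bookkeeping rather than in any analytic estimate. The deviating point $\zeta_i$ may equal the right endpoint $t_{i+1}\notin I_i$, so both the invertibility supplied by Lemma \ref{est-mt} and the continuity splicing must be stated on the closure $\overline{I_i}$; and, crucially, in the backward orientation each interval $I_k$ is \emph{entered through its right node and exited through its left node}, the reverse of the forward case, so the roles of $t_k$ and $t_{k+1}$ in the entry/exit factors and the order of the product are interchanged. Aligning these indices so that the telescoped chain collapses to the precise form (\ref{transition2-inv}) is the one step that demands care; the underlying integrals, once the building block $y(t)=E(t,\zeta_i)y(\zeta_i)$ is in hand, are entirely routine.
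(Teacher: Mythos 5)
Your construction is correct and is essentially the standard one: the paper itself offers no proof of this proposition (it is quoted from \cite[p.~239]{Pinto}), and the argument given there is precisely your interval-by-interval scheme — integrate $\dot y=A(t)y+A_0(t)y(\zeta_i)$ on $\overline{I_i}$ to get the building block $y(t)=E(t,\zeta_i)y(\zeta_i)$, invert via the nonsingularity of $J$ (Lemma \ref{est-mt}), and telescope through the nodes by continuity. Your closing caveat about the backward case is apt: the factors in (\ref{transition2-inv}) are grouped differently from what a naive inversion of (\ref{transition2}) produces, but since all such groupings are telescoping representations of the same composite map $y(\tau)\mapsto y(t)$, uniqueness of solutions makes them equal as matrices, so nothing is lost.
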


\begin{remark}
\label{about-z}
A direct consequence of Proposition \ref{pitaron} is that the operator $Z(\cdot,\cdot)$ verifies
\begin{equation}
\label{MT1}
Z(t,\tau)Z(\tau,s)=Z(t,s) \quad \textnormal{and} \quad Z(t,s)=Z(s,t)^{-1}.
\end{equation}

In addition, by using the facts 
\begin{displaymath}
E(\tau,\tau)=I \quad \textnormal{and} \quad \frac{\partial E}{\partial t}(t,\tau)=A(t)E(t,\tau)+A_{0}(t)
\end{displaymath}
combined with Proposition \ref{pitaron}, we can deduce that:
\begin{equation}
\label{MT2}
\frac{\partial Z}{\partial t}(t,\tau)=A(t)Z(t,\tau)+A_{0}(t)Z(\gamma(t),\tau).
\end{equation}
\end{remark}




\begin{proposition}[Th. 3.1, \cite{Pinto}]
\label{vp}
For any $j> i$, $t\in I_{j}$ and $\tau\in I_{i}$, the solution of 
\begin{equation}
\label{perturbado}
\begin{array}{lcl}
\dot{x}(t)&=&A(t)x(t)+A_{0}(t)x(\gamma(t))+g(t),
\end{array}
\end{equation}
with $z(\tau)=\xi$
is defined by
\begin{displaymath}
\begin{array}{rcl}
x(t)&=&\displaystyle Z(t,\tau)\xi+\int_{\tau}^{\zeta_{i}}Z(t,\tau)\Phi(\tau,s)g(s)\,ds+\sum\limits_{r=i+1}^{j}\int_{t_{r}}^{\zeta_{r}}Z(t,t_{r})\Phi(t_{r},s)g(s)\,ds\\\\
&&\displaystyle +\sum\limits_{r=i}^{j-1}\int_{\zeta_{r}}^{t_{r+1}}Z(t,t_{r+1})\Phi(t_{r+1},s)g(s)\,ds\\\
&&\displaystyle +\sgn(t-\zeta_{j})\int_{\min\{\zeta_{j},t\}}^{\max\{\zeta_{j},t\}}\hspace{-0.3cm}\Phi(t,s)g(s)\,ds,
\end{array}
\end{displaymath}
when $\tau\in I_{i}^{+}=[t_{i},\zeta_{i})$.
\end{proposition}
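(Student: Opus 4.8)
The plan is to construct the solution interval by interval and then glue the pieces together using continuity at the nodes $t_k$, exploiting the fact that on each $I_k=[t_k,t_{k+1})$ the deviating argument is frozen at $\gamma(t)=\zeta_k$. On $I_k$ equation (\ref{perturbado}) therefore reduces to an ordinary linear nonhomogeneous system $\dot{x}=A(t)x+A_{0}(t)x(\zeta_k)+g(t)$ in which $x(\zeta_k)$ enters only as a constant vector. First I would apply the classical variation of parameters formula with base point $t_k$,
\[
x(t)=\Phi(t,t_k)x(t_k)+\Big(\int_{t_k}^{t}\Phi(t,u)A_{0}(u)\,du\Big)x(\zeta_k)+\int_{t_k}^{t}\Phi(t,u)g(u)\,du,
\]
and rewrite the middle term through the matrices (\ref{simbolo-0}) and (\ref{simbolo}); evaluating at $t=\zeta_k$ produces an implicit linear equation for $x(\zeta_k)$ whose coefficient matrix is nonsingular precisely because Lemma \ref{est-mt} guarantees, under \textbf{(C)}, that $J(\zeta_k,t_k)$ and $E(\zeta_k,t_k)$ are invertible. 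Solving it expresses $x(\zeta_k)$, and then $x(t_{k+1}^{-})$, in terms of $x(t_k)$ and an integral of $g$ over $I_k$.

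Second, continuity of the solution at $t_{k+1}$ yields a one-step recursion of the form $x(t_{k+1})=Z(t_{k+1},t_k)x(t_k)+G_k$, where the forcing term $G_k$ splits into a contribution over $I_k^{+}=[t_k,\zeta_k]$ weighted by $\Phi(t_k,\cdot)$ and a contribution over $I_k^{-}=[\zeta_k,t_{k+1}]$ weighted by $\Phi(t_{k+1},\cdot)$. This split is the crux of the matter: forcing acting before the sampling instant $\zeta_k$ alters the value $x(\zeta_k)$ that is fed back through $A_{0}(t)x(\gamma(t))$ and must be propagated backward to the left node $t_k$, whereas forcing acting after $\zeta_k$ does not influence $x(\zeta_k)$ and is propagated forward to the right node $t_{k+1}$; disentangling the two using the identity $E(t,\tau)=\Phi(t,\tau)J(t,\tau)$ and the invertibility above is where I expect the main technical effort to lie.

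Third, I would iterate this recursion from the index $i$ (the interval of $\tau$) up to $j$ (the interval of $t$), and collect the accumulated transition factors into the single matrices $Z(t,t_r)$ and $Z(t,t_{r+1})$ displayed in the statement by repeated use of the cocycle property $Z(t,t_r)=Z(t,s)Z(s,t_r)$ from (\ref{MT1}). The telescoping produces exactly the two sums $\sum_{r=i+1}^{j}$ and $\sum_{r=i}^{j-1}$. The two endpoint intervals are then handled separately: on $I_i$ the hypothesis $\tau\in I_i^{+}=[t_i,\zeta_i)$ means the base point is $\tau$ rather than $t_i$, which turns the first $I_i^{+}$ contribution into the term $\int_{\tau}^{\zeta_i}Z(t,\tau)\Phi(\tau,s)g(s)\,ds$; on $I_j$ the portion of $g$ lying between $\zeta_j$ and $t$ reaches $t$ directly through $\Phi(t,\cdot)$ without crossing a node, giving the final term $\sgn(t-\zeta_j)\int_{\min\{\zeta_j,t\}}^{\max\{\zeta_j,t\}}\Phi(t,s)g(s)\,ds$.

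As an alternative that avoids the recursive bookkeeping, I could instead take the displayed formula as an ansatz and verify it directly: checking $x(\tau)=\xi$, differentiating in $t$ on each open interval $(t_r,t_{r+1})$ while using the identity (\ref{MT2}) to reproduce the $A(t)x(t)+A_{0}(t)x(\gamma(t))$ part and the Leibniz rule on the final integral to reproduce $g(t)$, and confirming continuity at the nodes. Uniqueness of solutions for (\ref{perturbado}), which follows from the continuability theory of \cite{Pinto} under \textbf{(C)}, would then identify this ansatz with the actual solution. I expect the verification route to be cleaner to write, with the only delicate point being the differentiation of the $\sgn(t-\zeta_j)$ term across $t=\zeta_j$.
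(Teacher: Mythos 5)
Your proposal is sound, but note that the paper itself offers no proof of Proposition \ref{vp}: it is imported verbatim as Theorem 3.1 of \cite{Pinto}, so there is no internal argument to compare against. Your interval-by-interval construction is exactly the mechanism the authors gesture at immediately after the statement, where they display the one-step recursion
$x(t_{n+1})=Z(t_{n+1},t_{n})x(t_{n})+\int_{t_{n}}^{\zeta_{n}}Z(t_{n+1},t_{n})\Phi(t_{n},s)g(s)\,ds+\int_{\zeta_{n}}^{t_{n+1}}\Phi(t_{n+1},s)g(s)\,ds$; iterating it and using the cocycle property, as you describe, telescopes into the two sums, and the endpoint intervals give the remaining terms. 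Two small remarks: the implicit equation for $x(\zeta_k)$ is obtained most cleanly by taking $\zeta_k$ itself as base point, which gives $x(t)=E(t,\zeta_{k})x(\zeta_{k})+\int_{\zeta_{k}}^{t}\Phi(t,s)g(s)\,ds$ directly, with coefficient matrix $J(t_{k},\zeta_{k})$ rather than $J(\zeta_{k},t_{k})$ (Lemma \ref{est-mt} covers both); and the $\sgn$ term is not actually delicate, since for every position of $t$ relative to $\zeta_{j}$ it equals $\int_{\zeta_{j}}^{t}\Phi(t,s)g(s)\,ds$, which differentiates smoothly and, together with (\ref{MT2}), makes your alternative verification route go through.
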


It is important to emphasize that, when we consider any interval $I_{k}=[t_{k},t_{k+1})$, we have
the corresponding system of difference equations
\begin{displaymath}
\begin{array}{rcl}
x(t_{n+1})&=&\displaystyle Z(t_{n+1},t_{n})x(t_{n})+\int_{t_{n}}^{\zeta_{n}}Z(t_{n+1},t_{n})\Phi(t_{n},s)g(s)\,ds\\
&&\displaystyle +\int_{\zeta_{n}}^{t_{n+1}}\Phi(t_{n+1},s)g(s)\,ds,
\end{array}
\end{displaymath}
which plays a key role to obtain the solution of (\ref{perturbado}). This non-homogeneous difference equation justifies Definition
\ref{EDP}. The most studied case is $t_{n}=n$, 
that arises when $\gamma(t)=[t]$.

\begin{lemma}
\label{UBS}
If the linear DEPCAG \textnormal{(\ref{lineal-efes})} has an $\alpha$--exponential dichotomy on $(-\infty,\infty)$, then
the unique solution bounded on $(-\infty,+\infty)$ is the null solution.
\end{lemma}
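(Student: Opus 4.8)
The plan is to exploit the representation of every solution through the transition matrix together with the cocycle identity \eqref{MT1}. First I would record that, under \textbf{(A)}, \textbf{(B)}, \textbf{(C)}, Proposition \ref{pitaron} guarantees that any solution $z(t)$ of \eqref{lineal-efes} has the form $z(t)=Z(t,0)\xi$ with $\xi=z(0)$, and that Lemma \ref{est-mt} (through the invertibility it provides) yields the group properties $Z(t,\tau)Z(\tau,s)=Z(t,s)$ and $Z(s,t)=Z(t,s)^{-1}$ of \eqref{MT1} on all of $\mathbb{R}$. Assuming $z(t)$ is bounded, say $|z(t)|\le C$ for every $t$, I would split $\xi=P\xi+(I-P)\xi$ using the dichotomy projection $P$ of Definition \ref{ED1}.

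The central step is to recognize the stable and unstable components of $\xi$ as evaluations of $Z_{p}$ along the bounded orbit. For $t\ge s$ the first branch of \eqref{FG} combined with the cocycle identity gives $Z_{p}(t,s)z(s)=Z(t,0)PZ(0,s)Z(s,0)\xi=Z(t,0)P\xi$, and for $s>t$ the second branch gives $-Z_{p}(t,s)z(s)=Z(t,0)(I-P)Z(0,s)Z(s,0)\xi=Z(t,0)(I-P)\xi$. I would then fix $t=0$ in the first identity and let $s\to-\infty$: since $|0-s|=-s$ for $s<0$, the estimate \eqref{ED-Ineg} yields $|P\xi|=|Z_{p}(0,s)z(s)|\le K e^{\alpha s}C\to 0$, whence $P\xi=0$. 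Symmetrically, fixing $t=0$ in the second identity and letting $s\to+\infty$ gives $|(I-P)\xi|\le K e^{-\alpha s}C\to 0$, whence $(I-P)\xi=0$. Adding these, $\xi=0$, so $z(t)=Z(t,0)\cdot 0=0$ for all $t$, which is the assertion.

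I do not expect a deep obstacle here: once the cited machinery is invoked, the proof reduces to a short limiting argument driven by \eqref{ED-Ineg}. The genuinely load-bearing facts are the global invertibility and cocycle property of $Z(\cdot,\cdot)$, which rest on condition \textbf{(C)} via Lemma \ref{est-mt} and are already available from \cite{Pinto}; the only place demanding care is the sign bookkeeping between the two branches of \eqref{FG}, i.e. correctly attaching the minus sign to the $(I-P)$ piece so that both projected orbits are identified with $\pm Z_{p}(0,s)z(s)$ before passing to the limit.
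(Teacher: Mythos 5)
Your argument is correct and follows essentially the same route as the paper's: both exploit the dichotomy projection $P$ to split $\xi$ and use the exponential estimate to force each projected component to vanish. In fact, your explicit computation $Z_{p}(0,s)z(s)=P\xi$ (resp.\ $-Z_{p}(0,s)z(s)=(I-P)\xi$) followed by the limits $s\to\mp\infty$ supplies precisely the step the paper compresses into ``the conclusion follows easily from those properties,'' so your write-up is, if anything, the more complete of the two.
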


\begin{proof}
By following the lines of Coppel \cite{Coppel}, let us 
note that (\ref{ED-Ineg}) is equivalent to:  
\begin{displaymath}
\begin{array}{rl}
|Z(t,0)P\nu|\leq Ke^{-\alpha(t-s)}|Z(s,0)P\nu|  & \textnormal{for} \quad t\geq s\\\\
|Z(t,0)(I-P)\nu| \leq Ke^{-\alpha(s-t)}|Z(s,0)(I-P)\nu| &  \textnormal{for} \quad t<s.
\end{array}
\end{displaymath} 
for any arbirtary $\nu \in \mathbb{R}^{n}$. Let us assume that $P$ has rank $k$, then, the first inequality
says that there is a $k$--dimensional vector space of initial conditions, such that it corresponding solutions
converge to $0$ when $t\to +\infty$ (and are divergent when $s\to -\infty$). The second 
inequality says that there is a complementary $(n-k)$--dimensional
space, whose corresponding solutions are divergent when $s\to +\infty$ (and converge to $0$ when $t\to -\infty$).
The conclusion follows easily from those properties.
\end{proof}

Now, let us define the Green function corresponding to (\ref{lineal-efes}) in the interval $(-\infty,\infty)$:
\begin{definition}
\label{Green2}
Given $t\in (\zeta_{j},t_{j+1})$ and $Z_{p}(t,\tau)$ introduced in \textnormal{(\ref{FG})}, let us define
\begin{displaymath}
\widetilde{G}(t,s)=\left\{\begin{array}{rcl}
Z_{p}(t,t_{r})\Phi(t_{r},s)&\quad \textnormal{if} \quad & s\in [t_{r},\zeta_{r}) \quad \textnormal{for any} \quad r\in \mathbb{Z},\\
Z_{p}(t,t_{r+1})\Phi(t_{r+1},s)& \quad \textnormal{if} \quad & s\in [\zeta_{r},t_{r+1}) \quad \textnormal{for any} \quad r\in \mathbb{Z}\setminus\{j\},\\
\Phi(t,s) &\quad \textnormal{if} \quad & s\in [\zeta_{j},t),\\
0&\quad \textnormal{if} \quad & s\in [t,t_{j+1}),\\
\end{array}\right.
\end{displaymath}
and if $t\in [t_{j},\zeta_{j}]$
\begin{displaymath}
\widetilde{G}(t,s)=\left\{\begin{array}{rcl}
Z_{p}(t,t_{r})\Phi(t_{r},s)&\quad \textnormal{if} \quad & s\in [t_{r},\zeta_{r}) \quad \textnormal{for any} \quad r\in \mathbb{Z}\setminus\{j\},\\
Z_{p}(t,t_{r+1})\Phi(t_{r+1},s)& \quad \textnormal{if} \quad & s\in [\zeta_{r},t_{r+1}) \quad \textnormal{for any} \quad r\in \mathbb{Z},\\
0 &\quad \textnormal{if} \quad & s\in [t_{j},t),\\
-\Phi(t,s)&\quad \textnormal{if} \quad & s\in [t,\zeta_{j}),\\
\end{array}\right.
\end{displaymath}
\end{definition}

It is important to observe that $\widetilde{G}$ takes into account delayed and advanced intervals.

\begin{proposition}
\label{cotas-green}
If the DEPCAG \textnormal{(\ref{lineal-efes})} has an $\alpha$--exponential dichotomy \textnormal{(\ref{ED-Ineg})}, then $\widetilde{G}$ satisfies 
\begin{equation}
\label{cota-imp}
|\widetilde{G}(t,s)|\leq K\rho^{*} e^{-\alpha|t-s|}, \quad \textnormal{where} \quad \rho^{*}=\rho(A)e^{\alpha \theta}.
\end{equation}
\end{proposition}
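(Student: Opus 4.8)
The plan is to verify the bound (\ref{cota-imp}) piece by piece, running through the cases in Definition \ref{Green2}. The two building blocks are the dichotomy estimate (\ref{ED-Ineg}), namely $|Z_{p}(t,\tau)|\leq Ke^{-\alpha|t-\tau|}$, and the intra-interval estimate of Lemma \ref{est-mt}, namely $|\Phi(t,s)|\leq \rho(A)$ whenever $t$ and $s$ belong to the same interval $I_{i}$. Writing $j=i(t)$, it suffices to treat the two cases $t\in(\zeta_{j},t_{j+1})$ and $t\in[t_{j},\zeta_{j}]$ of Definition \ref{Green2}; within each, $\widetilde{G}(t,s)$ is either an ``off-diagonal'' product $Z_{p}(t,t_{r})\Phi(t_{r},s)$ or $Z_{p}(t,t_{r+1})\Phi(t_{r+1},s)$, a ``diagonal'' term $\pm\Phi(t,s)$ with $s,t\in I_{j}$, or $0$.

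For the off-diagonal pieces I would estimate the product of norms directly. Consider a term $Z_{p}(t,t_{r})\Phi(t_{r},s)$ with $s\in[t_{r},\zeta_{r})$. Here $s$ and $t_{r}$ both lie in $I_{r}$, so Lemma \ref{est-mt} gives $|\Phi(t_{r},s)|\leq\rho(A)$, while (\ref{ED-Ineg}) gives $|Z_{p}(t,t_{r})|\leq Ke^{-\alpha|t-t_{r}|}$. The only point requiring care is converting the exponent $|t-t_{r}|$ into $|t-s|$: since $s,t_{r}\in I_{r}$ and $I_{r}$ has length at most $\theta$ by \textbf{(B4)}, we have $|t_{r}-s|\leq\theta$, whence the triangle inequality yields $|t-t_{r}|\geq|t-s|-\theta$ and therefore
\begin{displaymath}
|Z_{p}(t,t_{r})\Phi(t_{r},s)|\leq K\rho(A)e^{-\alpha(|t-s|-\theta)}=K\rho(A)e^{\alpha\theta}e^{-\alpha|t-s|}=K\rho^{*}e^{-\alpha|t-s|}.
\end{displaymath}
The term $Z_{p}(t,t_{r+1})\Phi(t_{r+1},s)$ with $s\in[\zeta_{r},t_{r+1})$ is handled identically, using $|\Phi(t_{r+1},s)|\leq\rho(A)$ (valid up to the right endpoint of $\overline{I_{r}}$ by continuity, or directly because the constants $\rho_{r}^{\pm}(A)$ already involve closed-interval integrals) and $|t_{r+1}-s|\leq\theta$.

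It remains to dispose of the diagonal and zero pieces. When $\widetilde{G}(t,s)=\pm\Phi(t,s)$ the relevant $s$ and $t$ both lie in $I_{j}$, so $|\Phi(t,s)|\leq\rho(A)$ and $|t-s|\leq\theta$; since $K\geq 1$, the latter gives $e^{\alpha\theta}e^{-\alpha|t-s|}\geq 1$ and hence $|\Phi(t,s)|\leq\rho(A)\leq K\rho(A)e^{\alpha\theta}e^{-\alpha|t-s|}=K\rho^{*}e^{-\alpha|t-s|}$. The pieces where $\widetilde{G}(t,s)=0$ trivially obey the bound. Combining the three situations gives (\ref{cota-imp}) uniformly in $t,s$. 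I do not expect a genuine obstacle here: the argument is essentially bookkeeping over the branches of Definition \ref{Green2}, and the only substantive idea is the $\theta$--shift in the triangle inequality, which is exactly what manufactures the constant $\rho^{*}=\rho(A)e^{\alpha\theta}$; the mildest subtlety is justifying the $\Phi$--bound at the right endpoint $t_{r+1}$, noted above.
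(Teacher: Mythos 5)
Your proposal is correct and follows essentially the same route as the paper: a case analysis over the branches of Definition \ref{Green2}, bounding $Z_{p}$ by the dichotomy estimate (\ref{ED-Ineg}), bounding $\Phi$ inside a single interval by Lemma \ref{est-mt}, and absorbing the shift between $t_{r}$ (or $t_{r+1}$) and $s$ via $|t-t_{r}|\geq|t-s|-\theta$, which is exactly how the factor $e^{\alpha\theta}$ in $\rho^{*}$ arises; the diagonal case $K\geq 1$, $|t-s|\leq\theta$ is also identical. Your unified treatment with absolute values (instead of the paper's separate $j>r$ and $j<r$ subcases) and your remark about the right endpoint $t_{r+1}$ are harmless refinements, not a different method.
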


\begin{proof}
Without loss of generality, let us assume that $\zeta_{j}<t<t_{j+1}$. If $s\notin [t_{j},t_{j+1}]$, there exists
$r\neq j$ such that either $s\in [t_{r},\zeta_{r}]$  or $s\in [\zeta_{r},t_{j+1}]$.

Firstly, if $s\in [t_{r},\zeta_{r}]$ and $j>r$, we have that $t>t_{r}$. This fact, combined with Lemma \ref{est-mt}, eq.(\ref{ED-Ineg}) and  
Definition \ref{Green2} imply
\begin{displaymath}
\begin{array}{rcl}
|\widetilde{G}(t,s)|&=&|Z_{p}(t,t_{r})\Phi(t_{r},s)|\\ 
&\leq& Ke^{-\alpha(t-t_{r})}\rho(A)\\
&\leq& Ke^{-\alpha(t-s)}\rho(A)e^{\alpha \theta}.
\end{array}
\end{displaymath}

Secondly, if $s\in [t_{r},\zeta_{r}]$ and $j<r$, we have that $t\leq t_{r}\leq s$. As before, we can deduce that 
\begin{displaymath}
\begin{array}{rcl}
|\widetilde{G}(t,s)|&\leq& Ke^{-\alpha(t_{r}-t)}\rho(A)\\
&\leq& Ke^{-\alpha(t_{r}-s)}\rho(A)\\
&\leq& Ke^{-\alpha(t-s)}\rho^{*}.
\end{array}
\end{displaymath}

The reader can obtain similar estimations in the case $s\in [\zeta_{r},t_{j+1}]$. Finally, if $s\in I_{j}$, by using $K\geq 1$
combined with Lemma \ref{est-mt}, we can deduce that
\begin{displaymath}
\begin{array}{rcl}
|\widetilde{G}(t,s)|&\leq & \rho(A)\\
&\leq & K\rho(A)e^{\alpha|t-s|}e^{-\alpha|t-s|}\\
&\leq & Ke^{-\alpha|t-s|}\rho^{*},
\end{array}
\end{displaymath} 
and the Lemma follows.
\end{proof}

\begin{remark}
\label{tetp}
Notice that if $\theta$ is arbitrarily small, then $\rho^{*}$ is arbitrarily close to one and equation (\ref{cota-imp}) is close to
$$
|\widetilde{G}(t,s)|\leq Ke^{-\alpha|t-s|},
$$
which is the estimation of the Green's function in the ODE case.

\end{remark}

\begin{theorem}
\label{boundeness}
If DEPCAG \textnormal{(\ref{lineal-efes})} has an $\alpha$--exponential dichotomy and the series
\begin{equation}
\label{serie1}
\sum\limits_{r=-\infty}^{k}PZ(0,t_{r})\int_{t_{r}}^{\zeta_{r}}\Phi(t_{r},s)\,ds, \quad
\sum\limits_{r=-\infty}^{k}PZ(0,t_{r+1})\int_{\zeta_{r}}^{t_{r+1}}\Phi(t_{r+1},s)\,ds, 
\end{equation}
and
\begin{equation}
\label{serie2}
\sum\limits_{r=k}^{+\infty}(I-P)Z(0,t_{r})\int_{t_{r}}^{\zeta_{r}}\hspace{-0.3cm}\Phi(t_{r},s)\,ds, \hspace{0.1cm}
\sum\limits_{r=k}^{+\infty}(I-P)Z(0,t_{r+1})\int_{\zeta_{r}}^{t_{r+1}}\hspace{-0.5cm}\Phi(t_{r+1},s)\,ds, 
\end{equation}
are absolutely convergent for any integer $k$, then for each bounded function $t\mapsto g(t)$, the 
system \textnormal{(\ref{perturbado})} has a unique solution bounded 
on $\mathbb{R}$, defined by
$$
x_{g}^{*}(t)=\int_{-\infty}^{\infty}\widetilde{G}(t,s)g(s)\,ds
$$
and the map $g\mapsto x_{g}$ is Lipschitz satisfying
$$
|x_{g}^{*}|_{\infty}\leq \frac{2K\rho^{*}}{\alpha}|g|_{\infty},
$$
\end{theorem}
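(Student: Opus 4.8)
The plan is to verify directly that the integral formula defines a bounded solution, to read off the norm estimate from the Green's function bound, and then to obtain uniqueness and the Lipschitz property from linearity.

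First I would establish absolute convergence and the estimate simultaneously. Using the bound of Proposition \ref{cotas-green}, for any bounded $g$ one has
$$
|x_{g}^{*}(t)| \leq \int_{-\infty}^{\infty}|\widetilde{G}(t,s)||g(s)|\,ds \leq K\rho^{*}|g|_{\infty}\int_{-\infty}^{\infty}e^{-\alpha|t-s|}\,ds = \frac{2K\rho^{*}}{\alpha}|g|_{\infty}.
$$
This shows the defining integral converges absolutely and yields the claimed bound $|x_{g}^{*}|_{\infty}\leq \frac{2K\rho^{*}}{\alpha}|g|_{\infty}$.

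The heart of the argument is to show that $x_{g}^{*}$ actually solves \textnormal{(\ref{perturbado})}. I would connect the Green's function integral with the variation of parameters formula of Proposition \ref{vp} by decomposing $\int_{-\infty}^{\infty}\widetilde{G}(t,s)g(s)\,ds$ according to the piecewise definition of $\widetilde{G}$, separating the contribution of the projection $P$ (the stable/forward part, where $s<t$) from that of $I-P$ (the unstable/backward part, where $s>t$). The absolute convergence of the series \textnormal{(\ref{serie1})} and \textnormal{(\ref{serie2})} is precisely what guarantees that the ``initial datum'' at $t=0$ obtained by summing these interval-by-interval contributions is well defined; equivalently, it legitimizes passing to the limit $\tau\to-\infty$ in the stable part and taking the tail contribution from $+\infty$ in the unstable part of the formula in Proposition \ref{vp}. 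Having identified $x_{g}^{*}$ with a variation-of-parameters solution carrying this admissible initial condition, I would verify the equation on each interval $(t_{i},t_{i+1})$ by differentiating under the integral sign, using identity \textnormal{(\ref{MT2})} together with $E(\tau,\tau)=I$ and $\tfrac{\partial E}{\partial t}(t,\tau)=A(t)E(t,\tau)+A_{0}(t)$; the jump terms of $\widetilde{G}$ at $s=t$, $s=t_{r}$ and $s=\zeta_{r}$ combine to produce exactly the forcing $g(t)$ and to ensure continuity across the nodes $t_{i}$. This bookkeeping, which must also correctly handle the advanced/delayed piecewise-constant term $A_{0}(t)x(\gamma(t))$, is where the real difficulty lies and is the main obstacle I anticipate.

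Uniqueness then follows immediately from Lemma \ref{UBS}: if $x_{1}$ and $x_{2}$ are both bounded solutions of \textnormal{(\ref{perturbado})}, their difference $x_{1}-x_{2}$ is a bounded solution of the homogeneous system \textnormal{(\ref{lineal-efes})}, hence identically zero. Finally, the Lipschitz property of $g\mapsto x_{g}^{*}$ is a consequence of linearity: since $s\mapsto\widetilde{G}(t,s)$ does not depend on $g$, the map is linear, so $x_{g_{1}}^{*}-x_{g_{2}}^{*}=x_{g_{1}-g_{2}}^{*}$, and applying the estimate of the first step to $g_{1}-g_{2}$ gives $|x_{g_{1}}^{*}-x_{g_{2}}^{*}|_{\infty}\leq \frac{2K\rho^{*}}{\alpha}|g_{1}-g_{2}|_{\infty}$, which is the asserted Lipschitz constant.
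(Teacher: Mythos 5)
Your proposal is correct in outline, and the existence half coincides with the paper's: the paper likewise gets boundedness and the constant $2K\rho^{*}/\alpha$ straight from Proposition \ref{cotas-green}, writes $x_{g}^{*}$ as the interval-by-interval series hidden in Definition \ref{Green2}, and verifies the equation via (\ref{MT2}) together with the vanishing of $\int_{\zeta_{j}}^{\zeta_{j}}\Phi(\zeta_{j},s)g(s)\,ds$ (the paper is no more explicit about this bookkeeping than you are, so flagging it as the main difficulty is fair). Where you genuinely diverge is uniqueness. You subtract two bounded solutions of (\ref{perturbado}), observe the difference is a bounded solution of (\ref{lineal-efes}), and invoke Lemma \ref{UBS}; the paper instead takes an arbitrary bounded solution, expands it by the variation-of-parameters formula of Proposition \ref{vp} with $\tau=0$, and uses the absolute convergence of (\ref{serie1})--(\ref{serie2}) to split the finite sums $\sum_{r=i+1}^{j}$ into differences of infinite tails, so that the solution is exhibited explicitly as $Z(t,0)\{x(0)+x_{1}+x_{2}\}+x_{g}^{*}(t)$ before Lemma \ref{UBS} is applied. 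Your route is shorter and equally valid; its one consequence worth noting is that the series hypotheses then play no essential role in the uniqueness step (and the convergence of the Green-function integral already follows from Proposition \ref{cotas-green} alone), whereas in the paper those hypotheses are exactly what make the infinite sums without the $Z(t,0)$ prefactor well defined. So your suggestion that the series convergence is what ``legitimizes'' the initial datum is closer to the paper's use of it than to what your own argument actually requires --- a harmless misattribution, not an error.
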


\begin{proof}
Without loss of generality, let us assume that $0\in [t_{i},\zeta_{i})$ and $t\in [\zeta_{j},t_{j+1})$ with $j>i$. 

\noindent\emph{Step 1:} We will prove that
\begin{displaymath}
\begin{array}{rcl}
x_{g}^{*}(t)&=&\displaystyle \sum\limits_{r=-\infty}^{j}\int_{t_{r}}^{\zeta_{r}}Z(t,0)PZ(0,t_{r})\Phi(t_{r},s)g(s)\,ds\\\\
&&\displaystyle+\sum\limits_{r=-\infty}^{j-1}\int_{\zeta_{r}}^{t_{r+1}}Z(t,0)PZ(0,t_{r+1})\Phi(t_{r+1},s)g(s)\,ds\\\\
&&\displaystyle -\sum\limits_{r=j+1}^{+\infty}\int_{t_{r}}^{\zeta_{r}}Z(t,0)(I-P)Z(0,t_{r})\Phi(t_{r},s)g(s)\,ds\\\\
&&\displaystyle -\sum\limits_{r=j+1}^{+\infty}\int_{\zeta_{r}}^{t_{r+1}}Z(t,0)(I-P)Z(0,t_{r+1})\Phi(t_{r+1},s)g(s)\,ds+\int_{\zeta_{j}}^{t}\Phi(t,s)g(s)\,ds,
\end{array}
\end{displaymath}
is a bounded solution of (\ref{perturbado}). Indeed, by using eq.(\ref{MT2}) combined with $\int_{\zeta_{j}}^{\zeta_{j}}\Phi(\zeta_{j},s)g(s)\,ds=0$, 
it is easy to see that $t\mapsto x^{*}(t)$ is solution of (\ref{perturbado}). On the other hand, a careful reading of Definition \ref{Green2} shows that
$$
x_{g}^{*}(t)=\int_{-\infty}^{+\infty}\widetilde{G}(t,s)g(s)\,ds,
$$ 
and the boundedness follows from Proposition \ref{cotas-green}.

\noindent\emph{Step 2:} We will prove that $x_{g}^{*}(t)$ is the unique bounded solution of (\ref{perturbado}). Indeed, let $t\mapsto x(t)$
be a bounded solution. By using Proposition \ref{vp} with $\tau=0$, we have that
\begin{displaymath}
\begin{array}{rcl}
x(t)&=&\displaystyle Z(t,0)x(0)+\int_{0}^{\zeta_{i}}Z(t,0)\Phi(0,s)g(s)\,ds+\sum\limits_{r=i+1}^{j}\int_{t_{r}}^{\zeta_{r}}Z(t,t_{r})\Phi(t_{r},s)g(s)\,ds\\\\
&&\displaystyle +\sum\limits_{r=i}^{j-1}\int_{\zeta_{r}}^{t_{r+1}}Z(t,t_{r+1})\Phi(t_{r+1},s)g(s)\,ds+\int_{\zeta_{j}}^{t}\Phi(t,s)g(s)\,ds,
\end{array}
\end{displaymath}
which can be written as
\begin{displaymath}
\begin{array}{rcl}
x(t)&=&\displaystyle Z(t,0)\Big\{x(0)+\int_{0}^{\zeta_{i}}\Phi(0,s)g(s)\,ds+\sum\limits_{r=i+1}^{j}P\int_{t_{r}}^{\zeta_{r}}Z(0,t_{r})\Phi(t_{r},s)g(s)\,ds\\\\
&&\displaystyle +\sum\limits_{r=i}^{j-1}P\int_{\zeta_{r}}^{t_{r+1}}Z(0,t_{r+1})\Phi(t_{r+1},s)g(s)\,ds\Big\}+\int_{\zeta_{j}}^{t}\Phi(t,s)g(s)\,ds\\\\
&&+Z(t,0)\Big\{\sum\limits_{r=i+1}^{j}(I-P)\int_{t_{r}}^{\zeta_{r}}Z(0,t_{r})\Phi(t_{r},s)g(s)\,ds\\\\
&&+\sum\limits_{r=i}^{j-1}(I-P)\int_{\zeta_{r}}^{t_{r+1}}Z(0,t_{r+1})\Phi(t_{r+1},s)g(s)\,ds\Big\}.
\end{array}
\end{displaymath}

By using (\ref{serie1})--(\ref{serie2}), we have that
\begin{displaymath}
\begin{array}{rlc}
\sum\limits_{r=i+1}^{j}P\int_{t_{r}}^{\zeta_{r}}Z(0,t_{r})\Phi(t_{r},s)g(s)\,ds&=&\sum\limits_{r=-\infty}^{j}P\int_{t_{r}}^{\zeta_{r}}Z(0,t_{r})\Phi(t_{r},s)g(s)\,ds-\\\\
&&\sum\limits_{r=-\infty}^{i}P\int_{t_{r}}^{\zeta_{r}}Z(0,t_{r})\Phi(t_{r},s)g(s)\,ds,
\end{array}
\end{displaymath}
\begin{displaymath}
\begin{array}{rcl}
\sum\limits_{r=i}^{j-1}P\int_{\zeta_{r}}^{t_{r+1}}Z(0,t_{r+1})\Phi(t_{r+1},s)g(s)\,ds&=&\sum\limits_{r=-\infty}^{j-1}P\int_{\zeta_{r}}^{t_{r+1}}Z(0,t_{r+1})\Phi(t_{r+1},s)g(s)\,ds-\\\\
&&\sum\limits_{r=-\infty}^{i-1}P\int_{\zeta_{r}}^{t_{r+1}}Z(0,t_{r+1})\Phi(t_{r+1},s)g(s)\,ds
\end{array}
\end{displaymath}
and
\begin{displaymath}
\begin{array}{rcl}
\sum\limits_{r=i+1}^{j}(I-P)\int_{t_{r}}^{\zeta_{r}}Z(0,t_{r})\Phi(t_{r},s)g(s)\,ds&=&\sum\limits_{r=i+1}^{+\infty}(I-P)\int_{t_{r}}^{\zeta_{r}}Z(0,t_{r})\Phi(t_{r},s)g(s)\,ds-\\\\
&&\sum\limits_{r=j+1}^{+\infty}(I-P)\int_{t_{r}}^{\zeta_{r}}Z(0,t_{r})\Phi(t_{r},s)g(s)\,ds.
\end{array}
\end{displaymath}

Moreover, notice that
\begin{displaymath}
\begin{array}{l}
\sum\limits_{r=i}^{j-1}(I-P)\int_{\zeta_{r}}^{t_{r+1}}Z(0,t_{r+1})\Phi(t_{r+1},s)g(s)\,ds=\\\\
\sum\limits_{r=i,r\neq j}^{+\infty}(I-P)\int_{\zeta_{r}}^{t_{r+1}}Z(0,t_{r+1})\Phi(t_{r+1},s)g(s)\,ds-
\sum\limits_{r=j+1}^{+\infty}(I-P)\int_{\zeta_{r}}^{t_{r+1}}Z(0,t_{r+1})\Phi(t_{r+1},s)g(s)\,ds,
\end{array}
\end{displaymath} 
and we can see that the bounded solution $x(t)$ can be written as follows
\begin{displaymath}
x(t)=Z(t,0)\{x(0)+x_{1}+x_{2}\}+x_{g}^{*}(t),
\end{displaymath}
where 
\begin{displaymath}
\begin{array}{rcl}
x_{1}&=&\int_{0}^{\zeta_{i}}\Phi(0,s)g(s)-\sum\limits_{r=-\infty}^{i}P\int_{t_{r}}^{\zeta_{r}}Z(0,t_{r})\Phi(t_{r},s)g(s)\,ds\\\\
&&-\sum\limits_{r=-\infty}^{i-1}P\int_{\zeta_{r}}^{t_{r+1}}Z(0,t_{r+1})\Phi(t_{r+1},s)g(s)\,ds
\end{array}
\end{displaymath}
and
\begin{displaymath}
\begin{array}{rcl}
x_{2}&=&\sum\limits_{r=i+1}^{+\infty}(I-P)\int_{t_{r}}^{\zeta_{r}}Z(0,t_{r})\Phi(t_{r},s)g(s)\,ds\\\\
&&+\sum\limits_{r=i,r\neq j}^{+\infty}(I-P)\int_{\zeta_{r}}^{t_{r+1}}Z(0,t_{r+1})\Phi(t_{r+1},s)g(s)\,ds.
\end{array}
\end{displaymath}

As $t\mapsto x_{g}^{*}(t)$ is a bounded solution of (\ref{perturbado}), we have that,
$$
x(t)-x_{g}^{*}(t)=Z(t,0)\{x(0)+x_{1}+x_{2}\}
$$
is a bounded solution of (\ref{lineal-efes}). Finally, Lemma \ref{UBS} implies that $x(t)=x_{g }^{*}(t)$ and the uniqueness follows.
\end{proof}

\begin{remark}
Theorem \ref{boundeness} generalizes a classical result in the ODE case (see \emph{e.g.}, \cite{Coppel,Massera}) 
and has been previously proved by Akhmet and Yilmaz in \cite{Akhmet-2012,Akhmet-2013}. We point out that
our proof was stated in \cite{Pinto-sub} and has some technical differences: we follow a constructive approach 
to deduce the bounded solution, we consider 
the intervals $[t_{r},\zeta_{r})$ and 
$[\zeta_{r},t_{r+1})$ instead of $(\zeta_{r},\zeta_{r+1})$ and we work with different upper bounds of the transition matrix 
$Z(t_{r+1},t_{r})$. 
\end{remark}



\begin{remark}
The convergence of series (\ref{serie1})--(\ref{serie2}) can be ensured by imposing additional properties to the sequence $\{t_{r}\}_{r}$. 
For example, by $\alpha$--exponential dichotomy (\ref{ED-Ineg}) combined with $Z(0,0)=I$ and Lemma \ref{est-mt}, we conclude that
\begin{displaymath}
\begin{array}{rcl}
\sum\limits_{r=k}^{+\infty}\Big|(I-P)Z(0,t_{r+1})\int_{\zeta_{r}}^{t_{r+1}}\Phi(t_{r+1},s)\Big|\,ds &\leq &K\rho(A)\sum\limits_{r=k}^{+\infty}e^{-\alpha |t_{r+1}|},\\
\end{array}
\end{displaymath}
and the second series of (\ref{serie2}) converges if the series $S_{n}=\sum\limits_{k=r}^{n}e^{-\alpha|t_{r+1}|}$ ($n>k$)
is convergent. Now, the convergence of $S_{n}$ can be ensured in several cases. For example, if there exists $\bar{\theta}>0$ such that
$$
\bar{\theta}\leq t_{r+1}-t_{r} \quad \textnormal{for any} \quad r\in \mathbb{Z},
$$
we have that the series $S_{n}$ is dominated by a geometric one. On the other hand, it is straightforward 
to see that, if there exists $C>0$ such that
\begin{displaymath}
|t_{r+2}|-|t_{r+1}|\geq  C \quad \textnormal{or} \quad |t_{r}|\geq C|r| \quad \textnormal{for any} \quad r>R
\end{displaymath}
for any $R$ arbitrarly large, then
\begin{displaymath}
\limsup\limits_{r\to +\infty}e^{-\alpha(|t_{r+2}|-|t_{r+1}|)}<1 \quad \textnormal{or} \quad \limsup\limits_{r\to +\infty}e^{-\frac{|t_{r+1}|}{r}}<1,
\end{displaymath}
which implies the convergence of the series by the ratio test or the radical test respectively.
\end{remark}

Throughout this paper, we will assume that (\ref{serie1})--(\ref{serie2}) are convergent.

\section{Main Results}

\begin{theorem}
\label{intermedio}
If \textnormal{(\ref{lineal-efes})} has a transition matrix $Z(t,0)$ satisfying the exponential 
dichotomy \textnormal{(\ref{ED-Ineg})}, conditions \textnormal{\textbf{(A)}}, \textnormal{\textbf{(B)}} 
and \textnormal{\textbf{(C)}} are satisfied
and
\begin{equation}
\label{FPT}
2(\ell_{1}+\ell_{2})K\rho^{*}<\alpha,
\end{equation}
\begin{equation}
\label{schema0}
F_{1}(\theta)(M_{0}+\ell_{2})\theta=v<1,\quad \textnormal{with} \quad F_{1}(\theta)=\frac{e^{(M+\ell_{1})\theta}-1}{(M+\ell_{1})\theta},
\end{equation}
\begin{equation}
\label{schema0B}
F_{0}(\theta)M_{0}\theta=\tilde{v}<1, \quad \textnormal{with} \quad F_{0}(\theta)=\frac{e^{M\theta}-1}{M\theta},
\end{equation}
then \textnormal{(\ref{lineal-efes})} and \textnormal{(\ref{sistema1})} are strongly topologically equivalent.
\end{theorem}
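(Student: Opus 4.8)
The plan is to adapt Palmer's construction to the DEPCAG setting, using the Green's function $\widetilde{G}$ of Definition \ref{Green2} and the bounded--solution operator of Theorem \ref{boundeness} as the basic tools, building the two homeomorphisms from opposite ends. For $H$, fix $(\tau,\xi)$ and let $x(\cdot)=x(\cdot,\tau,\xi)$ be the solution of (\ref{sistema1}) with $x(\tau)=\xi$; by \textbf{(A2)} the forcing term $s\mapsto f(s,x(s),x(\gamma(s)))$ is bounded by $\mu$, so Theorem \ref{boundeness} produces a unique bounded solution of $\dot z=A(t)z+A_{0}(t)z(\gamma(t))-f(t,x(t),x(\gamma(t)))$, namely $z(t)=-\int_{-\infty}^{\infty}\widetilde{G}(t,s)f(s,x(s),x(\gamma(s)))\,ds$. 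I would then set $H(\tau,\xi)=\xi+z(\tau)$. By construction $y(t):=x(t)+z(t)=H[t,x(t)]$ solves (\ref{lineal-efes}), which is property (iii), while the bound $|z|_{\infty}\le (2K\rho^{*}/\alpha)\mu$ gives property (ii).

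For $L$, fix $(\tau,\eta)$, let $y(\cdot)=y(\cdot,\tau,\eta)$ solve (\ref{lineal-efes}), and seek the bounded solution $w$ of the fixed--point equation $w(t)=\int_{-\infty}^{\infty}\widetilde{G}(t,s)f(s,y(s)+w(s),y(\gamma(s))+w(\gamma(s)))\,ds$ in the space of bounded continuous functions with the supremum norm. By Proposition \ref{cotas-green} together with \textbf{(A3)}, the associated operator has Lipschitz constant $(2K\rho^{*}/\alpha)(\ell_{1}+\ell_{2})$, which is strictly less than $1$ precisely by hypothesis (\ref{FPT}); hence Banach's theorem yields a unique $w$, and I define $L(\tau,\eta)=\eta+w(\tau)$. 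Then $x(t):=y(t)+w(t)=L[t,y(t)]$ solves (\ref{sistema1}), again giving (ii) and (iii).

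The heart of the invertibility argument is a pair of uniqueness statements for bounded--difference solutions. For (\ref{lineal-efes}) this is exactly Lemma \ref{UBS}. For (\ref{sistema1}) I would show that if $x_{1},x_{2}$ are solutions with $x_{1}-x_{2}$ bounded, then $\phi:=x_{1}-x_{2}$ is the bounded solution of the linear DEPCAG forced by $f(t,x_{1},x_{1}(\gamma))-f(t,x_{2},x_{2}(\gamma))$, so Theorem \ref{boundeness} and \textbf{(A3)} give $|\phi|_{\infty}\le (2K\rho^{*}/\alpha)(\ell_{1}+\ell_{2})|\phi|_{\infty}$; by (\ref{FPT}) this forces $\phi\equiv 0$. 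With both uniqueness statements in hand the standard composition argument applies: for any solution $x$ of (\ref{sistema1}), the curve $t\mapsto L[t,H[t,x(t)]]$ is again a solution of (\ref{sistema1}) differing from $x$ by a bounded amount, hence equals $x$; evaluating at $t=\tau$ yields $L(\tau,H(\tau,\xi))=\xi$, and symmetrically $H(\tau,L(\tau,\eta))=\eta$ via Lemma \ref{UBS}. Thus each $H(\tau,\cdot)$ is a bijection with inverse $L(\tau,\cdot)$.

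What remains, and what I expect to be the genuinely hard, DEPCAG--specific part, is the uniform continuity that upgrades these bijections to homeomorphisms and then to the strong equivalence of Definition \ref{StrTopEq}. This reduces to controlling the dependence of the solution flow $x(s,\tau,\xi)$ on $\xi$ and then passing that control through the Green's--function integral. On each interval $I_{k}$ the deviating argument is frozen, $x(\gamma(t))=x(\zeta_{k})$, so the DEPCAG collapses to a parametrised ODE on $[t_{k},t_{k+1}]$; conditions (\ref{schema0}) and (\ref{schema0B}) are exactly the smallness--of--mesh requirements ensuring, through a Gronwall/contraction estimate with the factors $F_{0}(\theta)M_{0}\theta<1$ and $F_{1}(\theta)(M_{0}+\ell_{2})\theta<1$, that the one--step maps $x(t_{k})\mapsto x(\zeta_{k})$ and $x(t_{k})\mapsto x(t_{k+1})$ are well defined, invertible, and uniformly Lipschitz. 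Propagating these per--interval bounds should give a global estimate of the form $|x(s,\tau,\xi)-x(s,\tau,\xi')|\le Ce^{\lambda|s-\tau|}|\xi-\xi'|$, and the main obstacle is then to balance this exponential growth rate $\lambda$ against the decay $e^{-\alpha|t-s|}$ of $\widetilde{G}$ so that the integral defining $H$ (and the fixed point defining $L$) inherits a modulus of continuity uniform in $t$. I expect the detailed verification of this balance, deferred to Sections 4 and 5, to carry the bulk of the technical effort, with the quantitative hypotheses (\ref{FPT})--(\ref{schema0B}) precisely calibrated to make it succeed.
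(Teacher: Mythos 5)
Your construction of $H$ and $L$ and your invertibility argument coincide with the paper's: $H(t,\xi)=\xi+\chi(t;(t,\xi))$ with $\chi$ the unique bounded solution furnished by Theorem \ref{boundeness} (Lemmas \ref{lemme-0} and \ref{lema1}), $L(t,\nu)=\nu+\vartheta(t;(t,\nu))$ with $\vartheta$ obtained from the contraction that (\ref{FPT}) makes available (Lemmas \ref{lemme-0-b} and \ref{lema2}), and the compositions handled exactly as in Lemmas \ref{composition} and \ref{invertibilidad} (your ``a bounded difference of two solutions of (\ref{sistema1}) must vanish'' step is the same contraction estimate the paper applies to $J[t,x(t)]-x(t)$). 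One point you pass over silently: for $H(\tau,\xi)=\xi+z(\tau)$ to be well defined as a function of $(\tau,\xi)$ and simultaneously satisfy $H[t,x(t)]=x(t)+z(t)$ along the whole trajectory, you need the coherence identity $\chi(t;(\tau,\xi))=\chi(t;(t,x(t,\tau,\xi)))$, which the paper isolates as Remark \ref{uniqueness}; it follows from uniqueness of solutions together with uniqueness of the bounded solution, so it is harmless, but it should be stated.

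The genuine gap is in the last step, which is precisely what makes the equivalence \emph{strong}. You propose to ``balance'' the Gronwall growth rate $\lambda$ against the decay $e^{-\alpha|t-s|}$ of $\widetilde G$. Taken literally this requires $\lambda<\alpha$, and nothing in the hypotheses gives that: the rates produced by (\ref{schema0}) and (\ref{schema0B}) are $p_{1}=M+\ell_{1}+(M_{0}+\ell_{2})e^{(M+\ell_{1})\theta}/(1-v)$ and $p_{2}=M+M_{0}e^{M\theta}/(1-\tilde v)$, and Theorem \ref{intermedio2} even assumes $\alpha<\min\{p_{1},p_{2}\}$, i.e.\ the opposite inequality, in order to obtain H\"older continuity. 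The paper's actual mechanism (Lemmas \ref{end-a1} and \ref{end-a2}) never compares $\lambda$ with $\alpha$: it is a two-scale truncation. Split $\int_{\mathbb{R}}$ at $|s-t|=L$; on $|s-t|>L$ use only \textbf{(A2)} and Proposition \ref{cotas-green} to bound the contribution by $\tfrac{4K\mu\rho^{*}}{\alpha}e^{-\alpha L}$ \emph{uniformly in} $\xi,\xi'$; on $|s-t|\le L$ use \textbf{(A3)} and the Gronwall estimate to get $D(L)|\xi-\xi'|$ with $D(L)\sim e^{p_{1}L}$; then choose $L=L(\varepsilon)$ first and $\delta=\varepsilon/(4D(L))$ second. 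This yields uniform continuity but no global Lipschitz or H\"older constant, which is exactly why the conclusion of Theorem \ref{intermedio} is only strong topological equivalence. For $L$ there is the extra wrinkle that $\vartheta$ itself appears inside $f$ in the fixed-point equation, so the near-field estimate produces an additional term $\tfrac{2K\rho^{*}}{\alpha}(\ell_{1}+\ell_{2})\|\vartheta(\cdot;(t,\nu))-\vartheta(\cdot;(t,\nu'))\|_{\infty}$ that must be absorbed to the left-hand side using (\ref{FPT}); your sketch does not account for this.
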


\begin{theorem}
\label{intermedio2}
If \textnormal{(\ref{lineal-efes})} has a transition matrix $Z(t,0)$ satisfying the exponential 
dichotomy \textnormal{(\ref{ED-Ineg})}, conditions \textnormal{\textbf{(A)}}, \textnormal{\textbf{(B)}}, \textnormal{\textbf{(C)}}, 
\textnormal{(\ref{FPT})}--\textnormal{(\ref{schema0B})} are satisfied and
\begin{equation}
\label{alfa}
\alpha<M+\min\Big\{\ell_{1}+\frac{M_{0}+\ell_{2}}{1-v}e^{(M+\ell_{1})\theta},\frac{M_{0}}{1-\tilde{v}}e^{M\theta}\Big\},
\end{equation}
then the systems \textnormal{(\ref{lineal-efes})} and \textnormal{(\ref{sistema1})} are H\"older strongly 
topologically equivalent, namely, there exists constants $C_{1}>1$,$D_{1}>1$,$C_{2}\in (0,1)$ and $D_{2}\in (0,1)$ such that the maps
$H$ and $L$ are H\"older continuous in the sense:
\begin{displaymath}
|H(t,\xi)-H(t,\xi')|\leq C_{1}|\xi-\xi'|^{C_{2}} \quad \textnormal{and} \quad
|L(t,\nu)-H(t,\nu')|\leq D_{1}|\nu-\nu'|^{D_{2}}
\end{displaymath}
for any couple $(\xi,\xi')$ and $(\nu,\nu')$ verifying $|\xi-\xi'|<1$ and $|\nu-\nu'|<1$.
\end{theorem}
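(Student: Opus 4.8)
The plan is to take the strong topological equivalence furnished by Theorem \ref{intermedio} as a starting point and to upgrade the regularity of the conjugating homeomorphisms to H\"older continuity through a near/far splitting of the integrals that define them. Following the Palmer--Shi--Xiong scheme, the map $H$ has the form $H(\tau,\xi)=\xi+h(\tau,\xi)$, where
\begin{displaymath}
h(\tau,\xi)=-\int_{-\infty}^{\infty}\widetilde{G}(\tau,s)\,f\big(s,x(s;\tau,\xi),x(\gamma(s);\tau,\xi)\big)\,ds,
\end{displaymath}
$x(\cdot;\tau,\xi)$ being the solution of (\ref{sistema1}) with $x(\tau)=\xi$; indeed $w=H[\cdot,x]-x$ solves (\ref{lineal-efes}) forced by $-f$, and Theorem \ref{boundeness} identifies the bounded correction as the displayed Green integral. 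The map $L(\tau,\nu)=\nu+g(\tau,\nu)$ is built symmetrically from the solutions of (\ref{lineal-efes}). Since $H(t,\xi)-H(t,\xi')=(\xi-\xi')+\big(h(t,\xi)-h(t,\xi')\big)$, it suffices to bound $|h(t,\xi)-h(t,\xi')|$, and analogously $|g(t,\nu)-g(t,\nu')|$.

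First I would establish two-sided exponential spreading bounds for the flows. Writing $u(s)=x(s;t,\xi)$, $w(s)=x(s;t,\xi')$, the difference $u-w$ obeys the homogeneous part of (\ref{sistema1}) perturbed by the Lipschitz increments of $f$. Applying a Gronwall argument interval by interval on each $I_{k}$, treating $I_{k}^{+}=[t_{k},\zeta_{k}]$ and $I_{k}^{-}=[\zeta_{k},t_{k+1})$ separately and resolving the implicit coupling produced by the term $x(\gamma(s))$, one obtains a constant $\Gamma\geq 1$ with
\begin{equation}
\label{spread-nl}
|x(s;t,\xi)-x(s;t,\xi')|\leq \Gamma\, e^{\beta_{H}|s-t|}|\xi-\xi'|,\qquad
\beta_{H}=M+\ell_{1}+\frac{M_{0}+\ell_{2}}{1-v}\,e^{(M+\ell_{1})\theta}.
\end{equation}
The factor $(1-v)^{-1}$ is exactly where condition (\ref{schema0}) enters: within each interval the advanced/delayed value $x(\gamma(s))$ must be solved in terms of the current state, and $v<1$ makes that resolution well posed, with $F_{1}(\theta)$ producing the exponential prefactor. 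The same computation for (\ref{lineal-efes}), now without the Lipschitz constants, yields
\begin{equation}
\label{spread-l}
|y(s;t,\nu)-y(s;t,\nu')|\leq \Gamma\, e^{\beta_{L}|s-t|}|\nu-\nu'|,\qquad
\beta_{L}=M+\frac{M_{0}}{1-\tilde{v}}\,e^{M\theta},
\end{equation}
where (\ref{schema0B}) guarantees $\tilde{v}<1$. I expect this step to be the main obstacle, because the piecewise constant argument destroys the direct applicability of the classical Gronwall lemma and forces the interval bookkeeping that produces precisely the exponents appearing in (\ref{spread-nl})--(\ref{spread-l}); Lemma \ref{EUUG} is what converts the bound at $\gamma(s)$ into one at $s$ at the cost of a harmless factor $e^{\beta_{H}\theta}$.

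Next I would perform the splitting. Put $\delta=|\xi-\xi'|<1$ and exploit the two available bounds on the integrand of $h(t,\xi)-h(t,\xi')$: the uniform bound $2\mu$ from \textbf{(A2)}, and the Lipschitz bound from \textbf{(A3)} combined with (\ref{spread-nl}) and Lemma \ref{EUUG}. For a threshold $T>0$, the region $\{|s-t|\leq T\}$ is estimated with (\ref{cota-imp}) and the Lipschitz bound, the integral $\int_{0}^{T}e^{(\beta_{H}-\alpha)\sigma}\,d\sigma$ giving a contribution $\leq c_{1}\,\delta\,e^{(\beta_{H}-\alpha)T}$, while $\{|s-t|>T\}$ is estimated with $|\widetilde{G}|\leq K\rho^{*}e^{-\alpha|t-s|}$ and the bound $2\mu$, giving $\leq c_{2}\,e^{-\alpha T}$. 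Hence
\begin{displaymath}
|h(t,\xi)-h(t,\xi')|\leq c_{1}\,\delta\,e^{(\beta_{H}-\alpha)T}+c_{2}\,e^{-\alpha T}.
\end{displaymath}
The first branch of the minimum in (\ref{alfa}) asserts $\alpha<\beta_{H}$, so the choice $T=\beta_{H}^{-1}\ln(1/\delta)$ balances both terms and yields $|h(t,\xi)-h(t,\xi')|\leq C\,\delta^{\alpha/\beta_{H}}$; absorbing the identity increment gives the first estimate with $C_{2}=\alpha/\beta_{H}\in(0,1)$ and some $C_{1}>1$. Running the identical argument for $g$ with (\ref{spread-l}) and the second branch $\alpha<\beta_{L}$ of (\ref{alfa}) produces $D_{2}=\alpha/\beta_{L}\in(0,1)$ and the H\"older estimate for $L$. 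The sole role of (\ref{alfa}) is to force both exponents strictly below one, which completes the proof.
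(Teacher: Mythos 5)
Your proposal follows essentially the same route as the paper: your spreading exponents $\beta_{H}$ and $\beta_{L}$ coincide with the paper's $p_{1}$ and $p_{2}$ from its Gronwall-type Lemmas \ref{CRCI} and \ref{CRCI2}, and the near/far splitting at the threshold $T=p_{1}^{-1}\ln(1/|\xi-\xi'|)$ with the balance $e^{-\alpha T}=|\xi-\xi'|^{\alpha/p_{1}}$, together with reading (\ref{alfa}) as $\alpha<p_{1}$ and $\alpha<p_{2}$, is exactly the paper's argument. The only point you gloss over is that the argument for $L$ is not literally identical to that for $H$: since $\vartheta$ is defined implicitly by a fixed-point equation, the Lipschitz estimate in the near region also produces a term proportional to $\|\vartheta(\cdot;(t,\nu))-\vartheta(\cdot;(t,\nu'))\|_{\infty}$, which the paper absorbs into the left-hand side using the contraction condition (\ref{FPT}) (the factor $(1-\Gamma^{*})^{-1}$); this extra absorption step is routine but necessary.
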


\begin{remark}
As we stated in the introduction, we generalize Papaschinopoulos's result \cite[Proposition 1]{Papaschinopoulos} in several ways:
\begin{itemize}
\item[i)] Theorems \ref{intermedio} and \ref{intermedio2} consider a generic piecewise constant argument including the particular
delayed case $\gamma(t)=[t]$, 
\item[ii)] We obtain results sharper than topological equivalence, namely, strongly and H\"older topological equivalence,
\item[iii)] We use a recently introduced definition of exponential dichotomy,
\item[iv)] Our results don't need to assume that (\ref{ode2}) has the exponential dichotomy (\ref{edest}) and allow limit cases 
as $A(t)=0$ for any $t\in \mathbb{R}$, 
\item[v)] The smallness of $A_{0}(\cdot)$ is not always necessary as in \cite{Papaschinopoulos}, for example 
a threshold between $\theta$ and $M_{0}$ ensuring $v<1$ can be constructed. 
\end{itemize}
\end{remark}

\begin{remark}
\label{Aboutcon}
Some comments about the conditions:
\begin{itemize}
\item[i)] Inequality (\ref{FPT}) is reminiscent of the contractivity condition stated by Palmer in \cite{Palmer}. Notice that
if $\theta=0$ (\emph{i.e.}, $\rho^{*}=1$) and $\ell_{2}=0$, then (\ref{FPT}) becomes the Palmer's condition $2\ell_{1}K<\alpha$.
\item[ii)] Inequalities (\ref{schema0}) and (\ref{schema0B}) can be verified in several cases. For example, when 
$\theta$ is arbitrarily small. Indeed, notice that if $\theta\to 0^{+}$, then $F_{0}(\theta),F_{1}(\theta)\to 1$ 
and $v\approx (M_{0}+\ell_{2})\theta<1$ (resp. $\tilde{v} \approx M_{0}\theta<1$). 
\item[iii)] In the section 2 of \cite{Chiu-2010}, it is proved that the inequality (\ref{schema0}) implies 
the existence and uniqueness of the solutions of (\ref{sistema1}). Indeed, it will be useful to denote 
by $x(t,\tau,\xi)$ as the unique solution of (\ref{sistema1}) passing through $\xi$ at $t=\tau$. By uniqueness of solutions of (\ref{sistema1}), 
we know that
\begin{equation}
\label{unicidad1}
x\big(s,t,x(t,\tau,\xi)\big)=x\big(s,\tau,\xi\big). 
\end{equation}
\item[iv)] Inequality (\ref{alfa}) is related with the H\"older continuity in the classical 
strongly topological equivalence literature (see \emph{e.g.},\cite{Shi}). In addition, it is always satisfied when $\alpha<M$.
\end{itemize}
\end{remark}

The first byproduct states that strongly topological equivalence is an equivalence relation 
since the composition of homeomorphisms is an homeomorphism and its proof is left to the reader:
\begin{corollary}
\label{corolario}
Let us consider the system
\begin{equation} 
\label{sistema14}
\begin{array}{lcl}
\dot{x}(t)&=&A(t)x(t)+A_{0}(t)x(\gamma(t))+h(t,x(t),x(\gamma(t))), 
\end{array}
\end{equation}
where $A$,$A_{0}$ and $h$ satisfy \textnormal{\textbf{(A)}} and $\gamma(\cdot)$
satisfies \textnormal{\textbf{(B)}}. If the assumptions of Theorem \textnormal{\ref{intermedio}} 
(resp. Theorem \textnormal{\ref{intermedio2}}) are satisfied,
then \textnormal{(\ref{sistema1})} and \textnormal{(\ref{sistema14})} 
are strongly topologically equivalent (resp. H\"older strongly topologically equivalent).
\end{corollary}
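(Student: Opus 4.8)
The plan is to realize the equivalence between the two quasilinear systems as a composition of the equivalences that each of them enjoys with the common linear system \eqref{lineal-efes}. Since $A$, $A_{0}$ and $h$ satisfy \textbf{(A)} and the smallness hypotheses \eqref{FPT}--\eqref{schema0B} (resp. also \eqref{alfa}) are in force, Theorem \ref{intermedio} (resp. Theorem \ref{intermedio2}) applies to the pair formed by \eqref{lineal-efes} and \eqref{sistema1}, producing a homeomorphism $H_{1}$ with inverse $L_{1}=H_{1}^{-1}$, and, applied a second time, to the pair formed by \eqref{lineal-efes} and \eqref{sistema14}, producing $H_{2}$ with inverse $L_{2}=H_{2}^{-1}$. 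I would then define
\[
\bar H(t,u)=L_{2}\big(t,H_{1}(t,u)\big),\qquad \bar L(t,u)=L_{1}\big(t,H_{2}(t,u)\big),
\]
and claim that $\bar H$ furnishes the desired strong (resp. H\"older) topological equivalence between \eqref{sistema1} and \eqref{sistema14}, with $\bar L=\bar H^{-1}$.

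Next I would verify the three properties of Definition \ref{TopEq} for $\bar H$. Property (i) is immediate, since for fixed $t$ the map $\bar H(t,\cdot)$ is the composition of the homeomorphisms $H_{1}(t,\cdot)$ and $L_{2}(t,\cdot)$. For property (ii) I would telescope
\[
\bar H(t,u)-u=\big[L_{2}(t,H_{1}(t,u))-H_{1}(t,u)\big]+\big[H_{1}(t,u)-u\big];
\]
the second bracket is bounded by property (ii) for $H_{1}$, and, setting $v=H_{1}(t,u)$, the first bracket equals $L_{2}(t,v)-v$, bounded by property (ii) for $L_{2}$. For property (iii), if $x(t)$ solves \eqref{sistema1} then $H_{1}[t,x(t)]$ solves \eqref{lineal-efes}, and since $L_{2}$ sends solutions of \eqref{lineal-efes} to solutions of \eqref{sistema14}, the function $\bar H[t,x(t)]=L_{2}[t,H_{1}[t,x(t)]]$ solves \eqref{sistema14}. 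A direct substitution shows $\bar H(t,\bar L(t,u))=u=\bar L(t,\bar H(t,u))$, so $\bar L=\bar H^{-1}$ enjoys the symmetric properties. The uniform continuity required by Definition \ref{StrTopEq} then follows because the composition of two maps that are uniformly continuous in $u$ with a modulus of continuity independent of $t$ is again uniformly continuous with a $t$-independent modulus; this settles the case of Theorem \ref{intermedio}.

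The delicate point is the H\"older case. The estimate for $L_{2}$ supplied by Theorem \ref{intermedio2} is only valid when its two arguments lie within distance $1$, whereas $H_{1}$ may increase distances because $C_{1}>1$, so a naive chaining of the two H\"older inequalities is not licit. To overcome this I would argue by cases for $|\xi-\xi'|<1$. When $|H_{1}(t,\xi)-H_{1}(t,\xi')|<1$, I chain the two H\"older inequalities directly to obtain $|\bar H(t,\xi)-\bar H(t,\xi')|\le D_{1}C_{1}^{D_{2}}|\xi-\xi'|^{C_{2}D_{2}}$. When $|H_{1}(t,\xi)-H_{1}(t,\xi')|\ge 1$, I instead bound $L_{2}$ through the boundedness of $L_{2}-\mathrm{id}$: writing $B_{2}=\sup|L_{2}(t,u)-u|$ and using $|\nu-\nu'|\ge1$ gives $|L_{2}(t,\nu)-L_{2}(t,\nu')|\le(1+2B_{2})|\nu-\nu'|$, whence $|\bar H(t,\xi)-\bar H(t,\xi')|\le(1+2B_{2})C_{1}|\xi-\xi'|^{C_{2}}$; since $0<C_{2}D_{2}\le C_{2}$ and $|\xi-\xi'|<1$, the factor $|\xi-\xi'|^{C_{2}}$ is dominated by $|\xi-\xi'|^{C_{2}D_{2}}$. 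Taking $\bar C_{1}=\max\{D_{1}C_{1}^{D_{2}},(1+2B_{2})C_{1}\}>1$ and $\bar C_{2}=C_{2}D_{2}\in(0,1)$ yields the H\"older estimate on $\{|\xi-\xi'|<1\}$; the symmetric argument applied to $\bar L$ produces analogous constants $\bar D_{1}>1$ and $\bar D_{2}\in(0,1)$. I expect this case split, forced by the purely local nature of the H\"older bounds together with the expansion $C_{1}>1$, to be the only genuine obstacle; everything else reduces to composing homeomorphisms and uniformly continuous maps.
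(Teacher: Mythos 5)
Your proposal is correct and follows exactly the route the paper intends: the paper gives no proof at all, stating only that the result holds ``since the composition of homeomorphisms is an homeomorphism'' and leaving the details to the reader, and your construction $\bar H=L_{2}\circ H_{1}$, $\bar L=L_{1}\circ H_{2}$ is precisely that composition through the common linear system (\ref{lineal-efes}). Your case split for the H\"older composition (handling $|H_{1}(t,\xi)-H_{1}(t,\xi')|\geq 1$ via the boundedness of $L_{2}-\mathrm{id}$, since $C_{1}>1$ prevents naive chaining of the two local H\"older bounds) is a legitimate and necessary detail that the paper's one-line justification silently skips.
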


In the limit case $A_{0}(t)=0$, we have that assumption \textbf{(C)} is always verified since $\ln\rho_{k}^{+}(A_{0})=\ln\rho_{k}^{-}(A_{0})=0$.
In addition, the linear DEPCAG system (\ref{lineal-efes}) becomes the ODE system
(\ref{ode2}). Finally, we can see that $J(t,\tau)=I$, $E(t,\tau)=Z(t,\tau)=\Phi(t,\tau)$ and 
the Green function $\widetilde{G}(t,s)$ becomes:
\begin{displaymath}
G(t,s)=
\left\{\begin{array}{rcl}
\Phi(t)P\Phi^{-1}(s) & \textnormal{if} & t\geq s\\
-\Phi(t)(I-P)\Phi^{-1}(s) &\textnormal{if}& s>t.
\end{array}\right.
\end{displaymath}

Now, it is easy to prove the following result:
\begin{corollary}
\label{intermedio-cor}
If the system \textnormal{(\ref{ode2})} has a Cauchy matrix $\Phi(t)$ satisfying the $\tilde{\alpha}$--exponential 
dichotomy \textnormal{(\ref{edest})}, $A_{0}(t)=0$ for any $t$, conditions \textnormal{\textbf{(A)}} and \textnormal{\textbf{(B)}} 
are satisfied in this context and
\begin{equation}
\label{FPTcor}
2(\ell_{1}+\ell_{2})\tilde{K}<\tilde{\alpha},
\end{equation}
\begin{equation}
\label{schema0cor}
F_{1}(\theta)\ell_{2}\theta=v_{0}<1,
\end{equation}
then the systems \textnormal{(\ref{sistema1})} and \textnormal{(\ref{ode2})} are strongly topologically equivalent. In addition,
if $M>\tilde{\alpha}$, then the systems \textnormal{(\ref{sistema1})} and \textnormal{(\ref{ode2})} are H\"older 
strongly topologically equivalent.
\end{corollary}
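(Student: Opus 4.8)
The plan is to obtain Corollary~\ref{intermedio-cor} as a direct specialization of Theorems~\ref{intermedio} and~\ref{intermedio2} to the degenerate case $A_{0}(t)\equiv 0$, checking that each hypothesis of those theorems either becomes trivial or collapses to the stated conditions. First I would record the simplifications already noted before the statement: since $\ln\rho_{k}^{\pm}(A_{0})=0$, assumption \textbf{(C)} holds automatically; moreover $J(t,\tau)=I$ and $E(t,\tau)=Z(t,\tau)=\Phi(t,\tau)$, so the transition matrix of (\ref{lineal-efes}) coincides with the Cauchy matrix of the ODE (\ref{ode2}) and the DEPCAG Green function $\widetilde{G}$ reduces to the classical $G$. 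In particular $M_{0}=0$ in \textbf{(A1)}. It is worth stressing that, although the linear part collapses to an ODE, the perturbation $f(t,x(t),x(\gamma(t)))$ still carries the piecewise constant argument, so the statement is a genuine DEPCAG result and not a mere restatement of Palmer's theorem.

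Next I would match the dichotomy and the small-parameter conditions. With $Z=\Phi$ the kernel $Z_{p}(t,s)$ of (\ref{FG}) is exactly $G(t,s)$, so the $\alpha$--exponential dichotomy (\ref{ED-Ineg}) of the DEPCAG is precisely the $\tilde{\alpha}$--exponential dichotomy (\ref{edest}) of (\ref{ode2}) with the same projection $P$ and constants $K=\tilde{K}$, $\alpha=\tilde{\alpha}$. Putting $M_{0}=0$ in (\ref{schema0}) turns it into $F_{1}(\theta)\ell_{2}\theta<1$, i.e.\ exactly (\ref{schema0cor}) (so that $v=v_{0}$), while (\ref{schema0B}) becomes $F_{0}(\theta)\cdot 0\cdot\theta=0<1$ and is therefore automatically satisfied.

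The one point that needs care --- and the reason (\ref{FPTcor}) is weaker than a naive substitution into (\ref{FPT}) would suggest --- is the Green-function estimate. Indeed $\rho^{*}=\rho(A)e^{\alpha\theta}$ is still $>1$ in general because $A(t)\not\equiv 0$, so merely plugging $K=\tilde{K}$, $\alpha=\tilde{\alpha}$ into (\ref{FPT}) would give the stronger requirement $2(\ell_{1}+\ell_{2})\tilde{K}\rho^{*}<\tilde{\alpha}$. The improvement comes from observing that when $A_{0}\equiv 0$ the reduced Green function $G(t,s)$ displayed before the statement satisfies the sharp bound $|G(t,s)|\le \tilde{K}e^{-\tilde{\alpha}|t-s|}$ directly from (\ref{edest}), \emph{without} the interval-endpoint shift that produces the factor $\rho^{*}$ in Proposition~\ref{cotas-green}. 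Consequently the bounded-solution operator of Theorem~\ref{boundeness} has norm $\le 2\tilde{K}/\tilde{\alpha}$ rather than $2K\rho^{*}/\alpha$, the contraction constant governing the topological-equivalence fixed point is $(\ell_{1}+\ell_{2})\,2\tilde{K}/\tilde{\alpha}$, and the sharp contractivity requirement is exactly (\ref{FPTcor}). With this observation all hypotheses of Theorem~\ref{intermedio} are met, giving the strong topological equivalence of (\ref{sistema1}) and (\ref{ode2}).

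Finally, for the H\"older statement I would verify condition (\ref{alfa}) of Theorem~\ref{intermedio2}. Setting $M_{0}=0$ there, the second entry $\frac{M_{0}}{1-\tilde{v}}e^{M\theta}$ vanishes, so the minimum equals $0$ (the first entry $\ell_{1}+\frac{\ell_{2}}{1-v}e^{(M+\ell_{1})\theta}$ being strictly positive), and (\ref{alfa}) collapses to $\alpha<M$, that is $\tilde{\alpha}<M$. Hence the extra hypothesis $M>\tilde{\alpha}$ is precisely what (\ref{alfa}) requires, and Theorem~\ref{intermedio2} yields the H\"older strong topological equivalence. The whole argument is essentially a verification; the only genuinely nontrivial step is the sharpening of the Green-function bound described above, which must be justified from the explicit form of $G$ rather than from the general DEPCAG estimate.
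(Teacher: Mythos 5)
Your proposal is correct and follows exactly the route the paper intends: the paper states this corollary without proof (``it is easy to prove''), after noting precisely the specializations you list ($\textbf{(C)}$ automatic, $Z=\Phi$, $\widetilde{G}=G$), so the content is a direct specialization of Theorems~\ref{intermedio} and~\ref{intermedio2} with $M_{0}=0$. The one step the paper leaves implicit --- that (\ref{FPTcor}) carries no factor $\rho^{*}$ because the reduced Green function obeys $|G(t,s)|\le \tilde{K}e^{-\tilde{\alpha}|t-s|}$ directly from (\ref{edest}), and that $K\rho^{*}$ enters the whole chain of lemmas only through Proposition~\ref{cotas-green} --- is exactly the point you identify and justify, so nothing is missing.
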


Finally, if $A(t)=0$, we have that (\ref{lineal-efes})--(\ref{sistema1}) becomes
\begin{equation}
\label{lineal-efes-cor2}
\begin{array}{lcl}
\dot{y}(t)&=&A_{0}(t)y(\gamma(t)),
\end{array}
\end{equation}
\begin{equation} 
\label{sistema1-cor2}
\begin{array}{lcl}
\dot{x}(t)&=&A_{0}(t)x(\gamma(t))+f(t,x(t),x(\gamma(t))).
\end{array}
\end{equation}

In this context, the reader can verify that $\Phi(t,\tau)=I$ and 
\begin{displaymath}
J(t,\tau)=E(t,\tau)=I+\int_{\tau}^{t}A_{0}(s)\,ds.
\end{displaymath}

In addition, $A(t)=0$ modify the corresponding definitions of $Z(t,s)$ and $\widetilde{G}(t,s)$ 
with $\rho^{*}=e^{\alpha\theta}$ and it is easy to prove:
\begin{corollary}
\label{intermedio-cor2}
If \textnormal{(\ref{lineal-efes-cor2})} has a transition matrix $Z(t,0)$ satisfying the exponential 
dichotomy \textnormal{(\ref{ED-Ineg})}, conditions \textnormal{\textbf{(A)}} and \textnormal{\textbf{(B)}} 
are satisfied and
\begin{equation}
\label{FPT-cor2}
2(\ell_{1}+\ell_{2})Ke^{\alpha\theta}<\alpha,
\end{equation}
\begin{equation}
\label{schema0-cor2}
\tilde{F}_{1}(\theta)(M_{0}+\ell_{2})\theta=\tilde{v}_{0}<1, \quad \textnormal{with} \quad \tilde{F}_{1}(\theta)=\frac{e^{\ell_{1}\theta}-1}{\ell_{1}\theta}
\end{equation}
\begin{equation}
\label{schema0B-cor2}
(M_{0}+\ell_{2})\theta=\tilde{u}_{0}<1,
\end{equation}
then \textnormal{(\ref{lineal-efes-cor2})} and \textnormal{(\ref{sistema1-cor2})} are strongly topologically 
equivalent. In addition, if 
\begin{displaymath}
\alpha< \min\Big\{\ell_{1}+\frac{M_{0}+\ell_{2}}{1-\tilde{v}_{0}}e^{\ell_{1}\theta},\frac{M_{0}}{1-\tilde{u}_{0}}\Big\},
\end{displaymath}
then the systems \textnormal{(\ref{sistema1})} and \textnormal{(\ref{ode2})} are H\"older strongly topologically equivalent.
\end{corollary}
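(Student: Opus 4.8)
The plan is to recognise Corollary \ref{intermedio-cor2} as the specialization of Theorems \ref{intermedio} and \ref{intermedio2} to the degenerate situation $A(t)\equiv 0$, and to prove it by setting $A=0$ throughout and checking that every hypothesis demanded by those two theorems is furnished by the stated assumptions \textbf{(A)}, \textbf{(B)}, (\ref{FPT-cor2})--(\ref{schema0B-cor2}) together with the final $\alpha$--inequality. First I would record the structural collapses forced by $A=0$: the Cauchy matrix becomes $\Phi(t,s)=I$, so one may take $M=0$, the constant $\rho(A)=\sup_k\rho_k^{+}(A)\rho_k^{-}(A)$ equals $1$, and hence $\rho^{*}=\rho(A)e^{\alpha\theta}=e^{\alpha\theta}$, exactly as announced just before the statement; moreover $E(t,\tau)=J(t,\tau)=I+\int_\tau^{t}A_0(s)\,ds$, so the transition matrix and the Green function of Proposition \ref{cotas-green} are available as soon as $E$ is invertible.

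The second step is to recover the hypothesis \textbf{(C)}, which is deliberately omitted from the list. Since $A=0$ gives $\rho_k^{\pm}(A)=1$, the pair of inequalities (\ref{prop-mat2}) reduces to $\sup_k\ln\rho_k^{\pm}(A_0)<1$; but \textbf{(A1)} and \textbf{(B4)} give $\ln\rho_k^{\pm}(A_0)\le M_0\theta$, so \textbf{(C)} holds with $\nu^{\pm}\le M_0\theta$, and the bound $M_0\theta<1$ is guaranteed by (\ref{schema0B-cor2}) through $M_0\theta\le(M_0+\ell_2)\theta=\tilde u_0<1$. I would then match the surviving constants: putting $M=0$ in $F_1(\theta)=\frac{e^{(M+\ell_1)\theta}-1}{(M+\ell_1)\theta}$ returns precisely $\tilde F_1(\theta)=\frac{e^{\ell_1\theta}-1}{\ell_1\theta}$, so (\ref{schema0}) becomes (\ref{schema0-cor2}); the contractivity bound (\ref{FPT}), read with $\rho^{*}=e^{\alpha\theta}$, becomes (\ref{FPT-cor2}); and since $F_0(\theta)=\frac{e^{M\theta}-1}{M\theta}\to 1$ as $M\to 0^{+}$, the linear smallness condition (\ref{schema0B}) degenerates to $M_0\theta<1$, once more supplied by (\ref{schema0B-cor2}). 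With \textbf{(A)}, \textbf{(B)}, \textbf{(C)}, (\ref{FPT}), (\ref{schema0}) and (\ref{schema0B}) all in force, Theorem \ref{intermedio} applies directly and gives the strong topological equivalence of (\ref{lineal-efes-cor2}) and (\ref{sistema1-cor2}).

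For the H\"older statement I would substitute $M=0$ into (\ref{alfa}). Its first branch $M+\ell_1+\frac{M_0+\ell_2}{1-v}e^{(M+\ell_1)\theta}$ collapses, with $v=\tilde v_0$, to $\ell_1+\frac{M_0+\ell_2}{1-\tilde v_0}e^{\ell_1\theta}$, reproducing the first term of the corollary's threshold, while the second branch $M+\frac{M_0}{1-\tilde v}e^{M\theta}$ collapses to $\frac{M_0}{1-\tilde v}$ with $\tilde v=M_0\theta$; the requirement $\tilde u_0<1$ ensures $\tilde v=M_0\theta<1$, so this constant is finite and the displayed $\alpha$--inequality is the value of (\ref{alfa}) at $M=0$. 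Theorem \ref{intermedio2} then delivers the H\"older strong topological equivalence.

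The proof presents no conceptual obstacle; its only delicate points are bookkeeping ones. One must interpret $F_0(\theta)$ at $M=0$ as the removable-singularity value $F_0(0^{+})=1$ rather than as an indeterminate $0/0$, and one must remember to re-derive \textbf{(C)} from (\ref{schema0B-cor2}) since it is not listed among the hypotheses. Both verifications are immediate, which is why the corollary is asserted to be easy.
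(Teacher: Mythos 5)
Your proposal is correct and takes essentially the same route as the paper, whose entire proof consists of the single observation that \textbf{(C)} survives the specialization $A\equiv 0$ (since $\rho_{k}^{\pm}(A)=1$ and $\ln\rho_{k}^{\pm}(A_{0})\leq M_{0}\theta\leq\tilde{u}_{0}<1$), everything else being the direct substitution into Theorems \ref{intermedio} and \ref{intermedio2} that you carry out explicitly. The one bookkeeping caveat is the second branch of the H\"older threshold: your literal limit of (\ref{alfa}) yields $M_{0}/(1-M_{0}\theta)$, which is smaller than the corollary's $M_{0}/(1-\tilde{u}_{0})$, so to cover the full stated range of $\alpha$ you should use, as in Lemma \ref{CRCI3}, the inflated but still valid growth bound $\tilde{p}_{2}=M_{0}/(1-\tilde{u}_{0})$ in the H\"older argument (any exponential bound exceeding $\alpha$ serves there).
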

\begin{proof}
We only need to prove that \textbf{(C)} is satisfied with $A(t)=0$. Indeed, notice that $\rho(A)=1$ combined with 
\textbf{(A1)} and (\ref{schema0B-cor2}) imply that
\begin{displaymath}
\ln \rho_{k}^{\pm}(A_{0})\leq M_{0}\theta<\tilde{u}_{0}<1 
\end{displaymath}
and (\ref{prop-mat2}) follows.

\end{proof}

\begin{remark}
It is interesting to see that if $\theta \to 0^{+}$, then the (step) 
function $\gamma(t)$ converges uniformly to the identity function. This case is important in numerical 
approximation for solutions of differential delay equations (see \emph{e.g.}, \cite{Gyori} for details). Moreover, 
the authors are working in the problem of the approximation of the solutions of the ODE systems 
\begin{equation}
\label{ode11}
y'=A_{0}(t)y
\end{equation}
and 
\begin{equation}
\label{ode22}
x'=A_{0}(t)x+f(t,x,x),
\end{equation} 
uniformly on $(-\infty,+\infty)$ by solutions of (\ref{lineal-efes-cor2})--(\ref{sistema1-cor2}) 
when $\theta\to 0^{+}$ and some preliminar results are presented in \cite{Gonzalez}. In this framework, these expected approximation results combined with corollaries
\ref{intermedio-cor} and \ref{intermedio-cor2} could help to deduce and generalize (by an alternative approach) 
the classical Palmer's result \cite{Palmer} about topological equivalence between (\ref{ode11}) and (\ref{ode22}). Notice that conditions 
\textbf{(A)},\textbf{(B)},\textbf{(C)} and inequalities (\ref{FPT-cor2})--(\ref{schema0B-cor2}) "converge`` to those stated in Palmer's article. See 
Remarks \ref{tetp} and \ref{Aboutcon}.
\end{remark}

\section{Some Lemmas}
Throughout this section, we will assume that the system \textnormal{(\ref{lineal-efes})} has a transition 
matrix $Z(t,0)$ satisfying the exponential dichotomy \textnormal{(\ref{ED-Ineg})}. 

\begin{lemma}
\label{lemme-0}
For any solution $x(t,\tau,\xi)$ of \textnormal{(\ref{sistema1})} passing
through $\xi$ at $t=\tau$, there exists a unique bounded solution $t\mapsto \chi(t;(\tau,\xi))$ of
\begin{equation}
\label{auxiliar} 
\begin{array}{lcl}
\dot{z}(t)&=&A(t)z(t)+A_{0}(t)z(\gamma(t))-f(t,x(t,\tau,\xi),x(\gamma(t),\tau,\xi)).
\end{array}
\end{equation}
\end{lemma}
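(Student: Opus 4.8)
The plan is to recognize equation (\ref{auxiliar}) as a particular instance of the inhomogeneous linear DEPCAG (\ref{perturbado}) and then to invoke Theorem \ref{boundeness} directly. First I would fix the pair $(\tau,\xi)$ and regard the solution $t\mapsto x(t,\tau,\xi)$ of (\ref{sistema1}) as a known function; its global existence and uniqueness is guaranteed by (\ref{schema0}), as recorded in Remark \ref{Aboutcon} iii). Setting
$$
g(t)=-f\big(t,x(t,\tau,\xi),x(\gamma(t),\tau,\xi)\big),
$$
equation (\ref{auxiliar}) is precisely (\ref{perturbado}) with this choice of forcing term.

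The key observation is that $g$ is bounded. By assumption \textbf{(A2)} we have $|f(t,u,v)|\leq \mu$ for every $(t,u,v)\in\mathbb{R}\times\mathbb{R}^{n}\times\mathbb{R}^{n}$, hence $|g(t)|\leq \mu$ for all $t\in\mathbb{R}$, regardless of the values taken by $x(t,\tau,\xi)$ and $x(\gamma(t),\tau,\xi)$. Thus $g$ is a bounded function with $|g|_{\infty}\leq \mu$, and this estimate is uniform in the parameters $(\tau,\xi)$ since $\mu$ does not depend on them.

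With $g$ bounded, Theorem \ref{boundeness} applies verbatim: the standing hypothesis of this section supplies the $\alpha$--exponential dichotomy (\ref{ED-Ineg}), and the series (\ref{serie1})--(\ref{serie2}) are assumed convergent throughout the paper, so (\ref{perturbado}) admits a unique solution bounded on $\mathbb{R}$. This solution is the desired $\chi(t;(\tau,\xi))$, given explicitly by
$$
\chi(t;(\tau,\xi))=\int_{-\infty}^{\infty}\widetilde{G}(t,s)\,g(s)\,ds=-\int_{-\infty}^{\infty}\widetilde{G}(t,s)\,f\big(s,x(s,\tau,\xi),x(\gamma(s),\tau,\xi)\big)\,ds,
$$
and satisfying the uniform bound $|\chi(\cdot;(\tau,\xi))|_{\infty}\leq \tfrac{2K\rho^{*}}{\alpha}\,\mu$.

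Since the argument is essentially a substitution into an already-established result, there is no genuine obstacle here. The only point demanding care is verifying that the forcing term is uniformly bounded in $t$, which is immediate from \textbf{(A2)} whose very purpose is to make $f$ globally bounded. I would emphasize, however, that the uniformity of $|g|_{\infty}\leq\mu$ in $(\tau,\xi)$ is not incidental: it delivers a bound on $\chi(\cdot;(\tau,\xi))$ independent of the initial data, which is exactly what is needed later to control the map $H$ and establish property (ii) of Definition \ref{TopEq}.
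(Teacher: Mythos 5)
Your proposal is correct and follows exactly the paper's own argument: substitute $g(t)=-f(t,x(t,\tau,\xi),x(\gamma(t),\tau,\xi))$, note that \textbf{(A2)} gives $|g|_{\infty}\leq\mu$, and invoke Theorem \ref{boundeness} to obtain the unique bounded solution together with the bound $2K\rho^{*}\mu\alpha^{-1}$. No discrepancies to report.
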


\begin{proof}
By using Theorem \ref{boundeness} with $g(t)=-f(t,x(t,\tau,\xi),x(\gamma(t),\tau,\xi))$, we have that  
\begin{displaymath}
\chi(t;(\tau,\xi))=-\int_{-\infty}^{\infty}\widetilde{G}(t,s)f(s,x(s,\tau,\xi),x(\gamma(s),\tau,\xi))\,ds
\end{displaymath}
is the unique bounded solution of (\ref{auxiliar}). In addition, \textbf{(A2)} implies 
that $|\chi(t;(\tau,\xi))|\leq 2K\rho^{*} \mu \alpha^{-1}$.
\end{proof}

\begin{remark}
\label{uniqueness}
By uniqueness of solutions of (\ref{sistema1}) and equation (\ref{unicidad1}) with $s=t$ and $s=\gamma(t)$, we know that
\begin{displaymath}
x\big(t,t,x(t,\tau,\xi)\big)=x\big(t,\tau,\xi\big) \quad \textnormal{and}\quad  x\big(\gamma(t),t,x(t,\tau,\xi)\big)=x\big(\gamma(t),\tau,\xi\big),
\end{displaymath} 
this fact implies that system (\ref{auxiliar}) can be written as
\begin{displaymath}
\begin{array}{lcl}
\dot{z}(t)&=&A(t)z(t)+A_{0}(t)z(\gamma(t))\\\\
&&-f(t,x(t,t,x(t,\tau,\xi)),x(\gamma(t),t,x(t,\tau,\xi)))
\end{array}
\end{displaymath}
and Lemma \ref{lemme-0} implies that
\begin{equation}
\label{uniqueness1}
\chi(t;(\tau,\xi))=\chi(t;(t,x(t,\tau,\xi))).
\end{equation} 
\end{remark}

\begin{lemma}
\label{lemme-0-b}
For any solution $y(t,\tau,\nu)$ of \textnormal{(\ref{lineal-efes})} passing through $\nu$ at $t=\tau$, there 
exists a unique bounded solution $t\mapsto \vartheta(t;(\tau,\nu))$ of
\begin{equation}
\label{auxiliar2} 
\begin{array}{lcl}
\dot{w}(t)&=&A(t)w(t)+A_{0}(t)w(\gamma(t))\\\\
&&+f(t,y(t,\tau,\nu)+w(t),y(\gamma(t),\tau,\nu)+w(\gamma(t))). 
\end{array}
\end{equation}
\end{lemma}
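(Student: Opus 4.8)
The plan is to recast the problem as a fixed point equation and apply the Banach contraction principle, using Theorem \ref{boundeness} to invert the linear part and Proposition \ref{cotas-green} together with the contractivity hypothesis (\ref{FPT}) to supply the contraction constant. The crucial difference with Lemma \ref{lemme-0} is that the forcing term in (\ref{auxiliar2}) now depends on the unknown $w$ itself, evaluated both at $t$ and at $\gamma(t)$, so Theorem \ref{boundeness} cannot be applied in one stroke; instead it will be used to solve the linearised equation at each iteration.

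First I would work in the Banach space $\mathcal{B}$ of bounded continuous functions $w\colon \mathbb{R}\to \mathbb{R}^{n}$ equipped with the supremum norm $|\cdot|_{\infty}$, and define the operator $T\colon \mathcal{B}\to \mathcal{B}$ by
$$
(Tw)(t)=\int_{-\infty}^{\infty}\widetilde{G}(t,s)\,f\big(s,y(s,\tau,\nu)+w(s),y(\gamma(s),\tau,\nu)+w(\gamma(s))\big)\,ds.
$$
For each fixed $w\in\mathcal{B}$, assumption \textbf{(A2)} bounds the map $s\mapsto f(s,y(s,\tau,\nu)+w(s),y(\gamma(s),\tau,\nu)+w(\gamma(s)))$ by $\mu$, so Theorem \ref{boundeness} guarantees that $Tw$ is well defined, that it is the unique bounded solution of the linear DEPCAG perturbed by this forcing term, and that $|Tw|_{\infty}\leq 2K\rho^{*}\mu/\alpha$. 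In particular $T$ maps $\mathcal{B}$ into itself.

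Next I would show that $T$ is a contraction. For $w_{1},w_{2}\in\mathcal{B}$, assumption \textbf{(A3)} together with the elementary bounds $|w_{1}(s)-w_{2}(s)|\leq |w_{1}-w_{2}|_{\infty}$ and $|w_{1}(\gamma(s))-w_{2}(\gamma(s))|\leq |w_{1}-w_{2}|_{\infty}$ controls the integrand by $(\ell_{1}+\ell_{2})|w_{1}-w_{2}|_{\infty}\,|\widetilde{G}(t,s)|$. Proposition \ref{cotas-green} gives $|\widetilde{G}(t,s)|\leq K\rho^{*}e^{-\alpha|t-s|}$, whence $\int_{-\infty}^{\infty}|\widetilde{G}(t,s)|\,ds\leq 2K\rho^{*}/\alpha$, so that
$$
|Tw_{1}-Tw_{2}|_{\infty}\leq \frac{2(\ell_{1}+\ell_{2})K\rho^{*}}{\alpha}\,|w_{1}-w_{2}|_{\infty}.
$$
Hypothesis (\ref{FPT}) says precisely that this constant is strictly less than $1$.

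Finally I would invoke the Banach fixed point theorem to obtain a unique $\vartheta(\cdot;(\tau,\nu))\in\mathcal{B}$ with $T\vartheta=\vartheta$; by the characterisation in Theorem \ref{boundeness} this fixed point is exactly a bounded solution of (\ref{auxiliar2}), and conversely any bounded solution of (\ref{auxiliar2}) must agree with $T$ applied to itself and hence equals $\vartheta$, which yields both existence and uniqueness. The only delicate point is the dependence of the forcing on the advanced/delayed value $w(\gamma(t))$: here I would emphasise that the crude estimate $|w_{1}(\gamma(s))-w_{2}(\gamma(s))|\leq |w_{1}-w_{2}|_{\infty}$ absorbs the piecewise constant argument with no extra hypothesis, so that the contraction constant is governed solely by $\ell_{1}+\ell_{2}$ and by (\ref{FPT}). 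This is essentially the sole obstacle, the remainder being a direct adaptation of Lemma \ref{lemme-0}.
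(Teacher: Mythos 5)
Your proposal is correct and follows essentially the same route as the paper: the paper also works in the Banach space of bounded continuous functions, defines the same integral operator via the Green function $\widetilde{G}$, uses Theorem \ref{boundeness} for well-definedness, and obtains the contraction constant $2K\rho^{*}(\ell_{1}+\ell_{2})/\alpha<1$ from \textbf{(A3)}, Proposition \ref{cotas-green} and (\ref{FPT}). Your added remark that any bounded solution must coincide with the fixed point (via the uniqueness clause of Theorem \ref{boundeness}) merely makes explicit a step the paper leaves to the reader.
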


\begin{proof}
Let $BC$ be the Banach space of bounded and continuous functions $\varphi\colon \mathbb{R}\to \mathbb{R}^{n}$ with 
supremun norm. By Theorem \ref{boundeness}, we 
know that the map $\Gamma \colon BC \to BC$:
\begin{displaymath}
\Gamma \varphi(t)=\int_{-\infty}^{\infty}\widetilde{G}(t,s)f(s,y(s,\tau,\nu)+\varphi(s),y(\gamma(s),\tau,\nu)+\varphi(\gamma(s))) \,ds,
\end{displaymath}
is well defined. Now, notice that \textbf{(A3)} implies 
\begin{displaymath}
\begin{array}{rcl}
|\Gamma \varphi(t) - \Gamma \phi(t)| &\leq & \int_{\mathbb{R}}|\widetilde{G}(t,s)|\{\ell_{1}|\varphi(s)-\phi(s)|+\ell_{2}|\varphi(\gamma(s))-\phi(\gamma(s))|\}\,ds\\\\
                                         &\leq & \displaystyle \frac{2K\rho^{*}}{\alpha}(\ell_{1}+\ell_{2})||\varphi-\phi||,
\end{array}
\end{displaymath}
and (\ref{FPT}) implies that $\Gamma$ is  a contraction, having a unique fixed point satisfying
\begin{equation}
\label{fijo}
\begin{array}{l}
\vartheta(t;(\tau,\nu))=\\\\
\displaystyle \int_{-\infty}^{+\infty}\widetilde{G}(t,s)f(s,y(s,\tau,\nu)+\vartheta(s;(\tau,\nu)),y(\gamma(s),\tau,\nu)+\vartheta(\gamma(s);(\tau,\nu)))\,ds
\end{array}
\end{equation}
and the reader can easily verify that is a bounded solution of (\ref{auxiliar}).
\end{proof}

\begin{remark}
\label{uniquenessB}
Similarly as in Remark \ref{uniqueness}, the reader can verify that
\begin{equation}
\label{uniqueness1B}
\vartheta(t;(\tau,\nu))=\vartheta(t;(t,y(t,\tau,\nu))).
\end{equation} 
\end{remark}

\begin{lemma}
\label{lema1}
There exists a unique function 
$H\colon \mathbb{R}\times \mathbb{R}^{n}\to \mathbb{R}^{n}$, satisfying:
\begin{enumerate}
 \item[(i)] $H(t,x)-x$ is bounded in $\mathbb{R}\times \mathbb{R}^{n}$,
 \item[(ii)] For any solution $t\mapsto x(t)$ of \textnormal{(\ref{sistema1})}, then $t\mapsto H[t,x(t)]$ 
is a solution of \textnormal{(\ref{lineal-efes})} satisfying
\begin{equation}
\label{cota1}
|H[t,x(t)]-x(t)|\leq 2\mu K\rho^{*}\alpha^{-1}
\end{equation}
\end{enumerate}
\end{lemma}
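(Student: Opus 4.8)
The plan is to build $H$ directly from the bounded solution furnished by Lemma \ref{lemme-0}. Given a solution $x(t)=x(t,\tau,\xi)$ of (\ref{sistema1}), let $\chi(t;(\tau,\xi))$ be the unique bounded solution of (\ref{auxiliar}) and set, tentatively, $H[t,x(t)]=x(t)+\chi(t;(\tau,\xi))$. The first step is a direct computation: adding equation (\ref{sistema1}) for $x(t)$ to equation (\ref{auxiliar}) for $\chi$, the two copies of $f$ cancel and one obtains
$$\frac{d}{dt}\big(x(t)+\chi(t)\big)=A(t)\big(x(t)+\chi(t)\big)+A_{0}(t)\big(x(\gamma(t))+\chi(\gamma(t))\big),$$
so $x(t)+\chi(t;(\tau,\xi))$ is a solution of the linear system (\ref{lineal-efes}). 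This already yields property (ii), and the estimate (\ref{cota1}) is immediate, since $|H[t,x(t)]-x(t)|=|\chi(t;(\tau,\xi))|\le 2\mu K\rho^{*}\alpha^{-1}$ by the bound established in Lemma \ref{lemme-0}.

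To turn this into a genuine function of $(t,u)$ I would define $H(t,u)=u+\chi(t;(t,u))$, evaluating the construction along the solution of (\ref{sistema1}) issued from $u$ at time $t$. The point requiring care is well-definedness: one must check that the value assigned to the pair $(t,x(t))$ does not depend on the particular $(\tau,\xi)$ used to parametrize the orbit. This is exactly the content of the identity (\ref{uniqueness1}) of Remark \ref{uniqueness}, which gives $\chi(t;(\tau,\xi))=\chi(t;(t,x(t,\tau,\xi)))$; hence the tentative assignment $H[t,x(t)]=x(t)+\chi(t;(\tau,\xi))$ coincides with $H(t,x(t))$ as defined above, and the two descriptions agree. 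Property (i) then follows because $H(t,u)-u=\chi(t;(t,u))$ is uniformly bounded by $2\mu K\rho^{*}\alpha^{-1}$ over all $(t,u)\in\mathbb{R}\times\mathbb{R}^{n}$.

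The remaining point is uniqueness, which I expect to be the only genuinely delicate step. Suppose $\widetilde{H}$ also satisfies (i)--(ii). Fix $(t_{0},u_{0})$ and let $x(t)$ be the unique solution of (\ref{sistema1}) with $x(t_{0})=u_{0}$ (existence and uniqueness being guaranteed by (\ref{schema0}); see Remark \ref{Aboutcon}(iii)). Both $t\mapsto H[t,x(t)]$ and $t\mapsto \widetilde{H}[t,x(t)]$ are solutions of the linear system (\ref{lineal-efes}) by property (ii), so their difference is again a solution of (\ref{lineal-efes}); moreover it is bounded on $\mathbb{R}$, since each of $H[t,x(t)]-x(t)$ and $\widetilde{H}[t,x(t)]-x(t)$ is bounded by property (i). By Lemma \ref{UBS} the only solution of (\ref{lineal-efes}) bounded on $\mathbb{R}$ is the null solution, whence $H[t,x(t)]=\widetilde{H}[t,x(t)]$ for all $t$; evaluating at $t=t_{0}$ gives $H(t_{0},u_{0})=\widetilde{H}(t_{0},u_{0})$. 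As $(t_{0},u_{0})$ was arbitrary, $H=\widetilde{H}$, which completes the argument.
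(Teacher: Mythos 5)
Your proposal is correct and follows essentially the same route as the paper: the same construction $H(t,\xi)=\xi+\chi(t;(t,\xi))$ via Lemma \ref{lemme-0}, the same use of identity (\ref{uniqueness1}) to reconcile the parametrization by $(\tau,\xi)$ with the pointwise definition, and the same bound $2\mu K\rho^{*}\alpha^{-1}$. The only (cosmetic) difference is in the uniqueness step, where the paper identifies $\widetilde{H}[t,x(t)]-x(t)$ as a bounded solution of (\ref{auxiliar}) and invokes the uniqueness asserted in Lemma \ref{lemme-0}, whereas you subtract the two candidate conjugacies and appeal directly to Lemma \ref{UBS}; the two arguments are equivalent, since the former ultimately rests on the latter.
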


\begin{proof}
The proof will be decomposed in several steps.\\
\noindent\textit{Step 1) Existence of $H$:} Let us define the 
function $H\colon \mathbb{R}\times\mathbb{R}^{n}\to \mathbb{R}^{n}$
as follows
\begin{equation}
\label{homeo-00}
\begin{array}{rcl}
H(t,\xi)&=&\xi+\chi(t;(t,\xi))\\\\
&=&\displaystyle \xi-\int_{-\infty}^{\infty}\widetilde{G}(t,s)f(s,x(s,t,\xi),x(\gamma(s),t,\xi))\,ds
\end{array}
\end{equation}
and \textbf{(A2)} implies $|H(t,\xi)-\xi|\leq 2\mu K\rho^{*} \alpha^{-1}$.

By replacing $(t,\xi)$ by $(t,x(t,\tau,\xi))$ in (\ref{homeo-00}), we have that
\begin{displaymath}
\begin{array}{rcl}
H[t,x(t,\tau,\xi)]&=&x(t,\tau,\xi)+\chi(t;(t,x(t,\tau,\xi)))\\\\
\end{array}
\end{displaymath}
Now, by (\ref{uniqueness1}), we have 
\begin{equation}
\label{homeosofit}
H[t,x(t,\tau,\xi)]=x(t,\tau,\xi)+\chi(t;(\tau,\xi))
\end{equation}
or equivalently
\begin{equation}
\label{HomeH1}
\begin{array}{rcl}
H[t,x(t,\tau,\xi)]&=&\displaystyle x(t,\tau,\xi)-\int_{-\infty}^{\infty}\widetilde{G}(t,s)f(s,x(s,\tau,\xi),x(\gamma(s),\tau,\xi))\,ds.
\end{array}
\end{equation} 

Finally, it is easy to verify that $t\mapsto H[t,x(t,\tau,\xi)]$ is solution of (\ref{lineal-efes}).

\noindent\textit{Step 2) Uniqueness of $H$:} Let us suppose that there exists another map $\widetilde{H}$ satisfying 
properties (i) and (ii), this implies that
$\widetilde{H}[t,x(t,\tau,\xi)]$ is solution of (\ref{lineal-efes}) and
\begin{displaymath}
\hat{z}(t,\xi)=\widetilde{H}[t,x(t,\tau,\xi)]-x(t,\tau,\xi)
\end{displaymath}
is a bounded solution of (\ref{auxiliar}). Nevertheless, as (\ref{auxiliar}) has a unique bounded solution, we can 
conclude that $\hat{z}(t)=\chi(t;(\tau,\xi))$
and (\ref{homeosofit}) implies that
\begin{displaymath}
\widetilde{H}[t,x(t,\tau,\xi)]=x(t,\tau,\xi)+\chi(t;(\tau,\xi))=H[t,x(t,\tau,\xi)].
\end{displaymath}
\end{proof}

\begin{lemma}
\label{lema2}
There exists a unique function 
$L\colon \mathbb{R}\times \mathbb{R}^{n}\to \mathbb{R}^{n}$, satisfying:
\begin{enumerate}
 \item[(i)] $L(t,y)-y$ is bounded in $\mathbb{R}\times \mathbb{R}^{n}$,
 \item[(ii)] For any solution $t\mapsto y(t)$ of \textnormal{(\ref{lineal-efes})}, we have that 
 $t\mapsto L[t,y(t)]$ is a  solution of \textnormal{(\ref{sistema1})} verifying 
\begin{equation}
\label{cota2}
|L[t,y(t)]-y(t)|\leq 2\mu K\rho^{*} \alpha^{-1}.
\end{equation}
\end{enumerate}
\end{lemma}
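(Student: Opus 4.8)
The statement to be proved (Lemma~\ref{lema2}) is the mirror image of Lemma~\ref{lema1}: it asserts the existence and uniqueness of a map $L$ that carries solutions of the \emph{linear} DEPCAG (\ref{lineal-efes}) to solutions of the \emph{quasilinear} system (\ref{sistema1}), with $L(t,y)-y$ bounded and satisfying the explicit bound (\ref{cota2}). The natural plan is to run exactly the argument of Lemma~\ref{lema1} with the roles of the two systems interchanged, using the fixed-point object $\vartheta$ from Lemma~\ref{lemme-0-b} in place of the object $\chi$ from Lemma~\ref{lemme-0}.

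The plan is to \emph{define} $L$ explicitly by
\begin{displaymath}
L(t,\nu)=\nu+\vartheta(t;(t,\nu)),
\end{displaymath}
where $\vartheta$ is the unique bounded solution of (\ref{auxiliar2}) furnished by Lemma~\ref{lemme-0-b}. Property~(i) and the bound (\ref{cota2}) then follow immediately: by (\ref{fijo}) and assumption \textbf{(A2)}, together with the Green-function bound of Proposition~\ref{cotas-green}, one has
\begin{displaymath}
|\vartheta(t;(\tau,\nu))|\leq \int_{-\infty}^{\infty}|\widetilde{G}(t,s)|\,\mu\,ds\leq 2\mu K\rho^{*}\alpha^{-1},
\end{displaymath}
so that $|L(t,\nu)-\nu|\leq 2\mu K\rho^{*}\alpha^{-1}$, which is exactly (\ref{cota2}) evaluated along a solution.

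For property~(ii) I would substitute $(t,\nu)$ by $(t,y(t,\tau,\nu))$ in the definition of $L$ and invoke the translation identity (\ref{uniqueness1B}) from Remark~\ref{uniquenessB}, namely $\vartheta(t;(\tau,\nu))=\vartheta(t;(t,y(t,\tau,\nu)))$, to obtain
\begin{displaymath}
L[t,y(t,\tau,\nu)]=y(t,\tau,\nu)+\vartheta(t;(\tau,\nu)).
\end{displaymath}
Writing $w(t)=\vartheta(t;(\tau,\nu))$, the defining equation (\ref{auxiliar2}) says precisely that $y(t,\tau,\nu)+w(t)$ solves (\ref{sistema1}): adding the linear equation satisfied by $y$ to (\ref{auxiliar2}), the forcing term $f(t,y+w,\ldots)$ reassembles into the nonlinearity of (\ref{sistema1}) evaluated at $y+w$, so $t\mapsto L[t,y(t,\tau,\nu)]$ is a solution of (\ref{sistema1}). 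This is the step requiring the most care, since one must check that the argument of $f$ genuinely matches the solution $L[t,y(t)]$ itself (i.e.\ that $w(\gamma(t))$ pairs correctly with $y(\gamma(t))$ inside $f$), but it is forced by the form of (\ref{auxiliar2}).

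Finally, uniqueness is established exactly as in Step~2 of Lemma~\ref{lema1}: if $\widetilde{L}$ is any other map with properties (i)--(ii), then $\widetilde{L}[t,y(t,\tau,\nu)]-y(t,\tau,\nu)$ is a bounded solution of (\ref{auxiliar2}), and since Lemma~\ref{lemme-0-b} guarantees that (\ref{auxiliar2}) has a \emph{unique} bounded solution $\vartheta(t;(\tau,\nu))$, the two maps agree on every solution $y(t,\tau,\nu)$, hence everywhere. The only genuine obstacle is the verification in property~(ii) that the fixed-point relation (\ref{fijo}) really produces a solution of the target system (\ref{sistema1}) rather than of some translated equation; once the translation identity (\ref{uniqueness1B}) is in hand, the rest is bookkeeping parallel to Lemma~\ref{lema1}.
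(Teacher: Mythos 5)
Your proposal is correct and follows essentially the same route as the paper: define $L(t,\nu)=\nu+\vartheta(t;(t,\nu))$ via the fixed point of Lemma \ref{lemme-0-b}, get (i) and (\ref{cota2}) from \textbf{(A2)} and Proposition \ref{cotas-green}, use the translation identity (\ref{uniqueness1B}) to obtain $L[t,y(t,\tau,\nu)]=y(t,\tau,\nu)+\vartheta(t;(\tau,\nu))$, and deduce uniqueness from the uniqueness of the bounded solution of (\ref{auxiliar2}). The paper states this only in outline ("can be proved in a similar way" to Lemma \ref{lema1}), and your write-up simply supplies the details of that same argument.
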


\begin{proof}
The existence and uniqueness of the function $L$ satisfying (i)--(ii) can be proved in a similar way. Indeed, $L$ 
is defined by
$$
L(t,\nu)=\nu+\vartheta(t;(t,\nu)),
$$
where
\begin{displaymath}
\vartheta(t;(t,\nu))=\int_{-\infty}^{\infty}\widetilde{G}(t,s)f(s,y(s,t,\nu)+\vartheta(s;(t,\nu)),y(\gamma(s),t,\nu)+\vartheta(\gamma(s);(t,\nu)))\,ds
\end{displaymath}

As before, by using (\ref{uniqueness1B}), for $y(t)=y(t,\tau,\nu)$ we can define
\begin{equation}
\label{HomoL}
L[t,y(t)]=y(t,\tau,\nu)+\vartheta(t;(t,y(t,\tau,\nu)))=y(t,\tau,\nu)+\vartheta(t;(\tau,\nu)),
\end{equation}

It will be useful to describe $L[t,y(t)]$ as follows
\begin{equation}
\label{HomeL1}
\begin{array}{rcl}
L[t,y(t)]&=&\displaystyle y(t)+\int_{-\infty}^{\infty}\widetilde{G}(t,s)f(s,L[s,y(s)],L[\gamma(s),y(\gamma(s))])\,ds.\\\\
\end{array}
\end{equation}
\end{proof}

\begin{lemma}
\label{composition}
For any solution $x(t)$ of \textnormal{(\ref{sistema1})} and $y(t)$ of \textnormal{(\ref{lineal-efes})} with fixed $t$, it follows that
\begin{displaymath}
L[t,H[t,x(t)]]=x(t) \quad \textnormal{and} \quad H[t,L[t,y(t)]]=y(t).
\end{displaymath}
\end{lemma}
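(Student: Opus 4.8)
The plan is to exploit the variational characterizations of $H$ and $L$ established in Lemmas \ref{lema1} and \ref{lema2} together with the uniqueness of bounded solutions guaranteed by Theorem \ref{boundeness} and Lemma \ref{UBS}. The key structural facts are equations (\ref{homeosofit}) and (\ref{HomoL}): along a solution $x(t)=x(t,\tau,\xi)$ of (\ref{sistema1}) we have $H[t,x(t,\tau,\xi)]=x(t,\tau,\xi)+\chi(t;(\tau,\xi))$, and along a solution $y(t)=y(t,\tau,\nu)$ of (\ref{lineal-efes}) we have $L[t,y(t,\tau,\nu)]=y(t,\tau,\nu)+\vartheta(t;(\tau,\nu))$. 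I would verify both composition identities by showing that the difference between each side is a \emph{bounded} solution of a linear DEPCAG that admits only the null bounded solution.

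First I would prove $H[t,L[t,y(t)]]=y(t)$. Fix a solution $y(t)=y(t,\tau,\nu)$ of (\ref{lineal-efes}). By Lemma \ref{lema2}(ii), the curve $t\mapsto L[t,y(t)]$ is a solution of (\ref{sistema1}); call it $x(t)$. Applying Lemma \ref{lema1}(ii) to this solution, $t\mapsto H[t,x(t)]=H[t,L[t,y(t)]]$ is a solution of the linear system (\ref{lineal-efes}). Thus both $H[t,L[t,y(t)]]$ and $y(t)$ solve (\ref{lineal-efes}), so their difference solves (\ref{lineal-efes}) as well. The crucial estimate is that this difference is bounded: using the triangle inequality together with the bounds (\ref{cota1}) and (\ref{cota2}),
\begin{displaymath}
|H[t,L[t,y(t)]]-y(t)|\leq |H[t,x(t)]-x(t)|+|L[t,y(t)]-y(t)|\leq \frac{4\mu K\rho^{*}}{\alpha},
\end{displaymath}
which is uniformly bounded in $t$. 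Since by Lemma \ref{UBS} the only bounded solution of (\ref{lineal-efes}) on $(-\infty,\infty)$ is the null solution, the difference vanishes identically and $H[t,L[t,y(t)]]=y(t)$.

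The reverse identity $L[t,H[t,x(t)]]=x(t)$ is handled analogously, but here I expect the main obstacle: the difference $L[t,H[t,x(t)]]-x(t)$ is a difference of two solutions of the \emph{quasilinear} system (\ref{sistema1}), not of a linear one, so Lemma \ref{UBS} does not apply directly. The remedy is to pass to the equation (\ref{auxiliar}) satisfied by the deviation from a fixed solution. Fixing a solution $x(t)=x(t,\tau,\xi)$ of (\ref{sistema1}) and setting $y(t)=H[t,x(t)]$, which solves (\ref{lineal-efes}) by Lemma \ref{lema1}, I would show that $w(t):=L[t,H[t,x(t)]]-x(t)=L[t,y(t)]-x(t)$ is a bounded solution of (\ref{auxiliar}); boundedness follows from the same $4\mu K\rho^{*}\alpha^{-1}$ estimate as above. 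Since (\ref{auxiliar}) has, by Lemma \ref{lemme-0}, a unique bounded solution $\chi(t;(\tau,\xi))$, it suffices to check that $w(t)$ coincides with it. Tracking the defining formulas (\ref{homeosofit}) and (\ref{HomoL}) and invoking the uniqueness relations (\ref{uniqueness1}) and (\ref{uniqueness1B}), one finds that $L[t,y(t)]$ reproduces the solution $x(t)$ itself, forcing $w\equiv 0$ and hence $L[t,H[t,x(t)]]=x(t)$. This bookkeeping with the two characterizations—ensuring that the bounded-solution being matched is indeed the same $\chi$—is the delicate part, whereas the boundedness estimates and the appeal to uniqueness are routine.
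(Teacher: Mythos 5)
Your argument for the identity $H[t,L[t,y(t)]]=y(t)$ is fine, and is arguably cleaner than the paper's sketch: both curves solve the linear system (\ref{lineal-efes}), their difference is bounded by $4\mu K\rho^{*}\alpha^{-1}$ via (\ref{cota1})--(\ref{cota2}), and Lemma \ref{UBS} annihilates it. The gap is in the other identity, which is the one the paper actually proves in detail. You assert that $w(t)=L[t,H[t,x(t)]]-x(t)$ is a bounded solution of (\ref{auxiliar}); it is not. Writing $\tilde{x}(t)=L[t,H[t,x(t)]]$, both $\tilde{x}$ and $x$ solve the quasilinear system (\ref{sistema1}), so $w=\tilde{x}-x$ satisfies
\begin{displaymath}
\dot{w}=A(t)w+A_{0}(t)w(\gamma(t))+f(t,\tilde{x}(t),\tilde{x}(\gamma(t)))-f(t,x(t),x(\gamma(t))),
\end{displaymath}
whose forcing term depends on the unknown $\tilde{x}$, whereas (\ref{auxiliar}) carries the fixed inhomogeneity $-f(t,x(t,\tau,\xi),x(\gamma(t),\tau,\xi))$. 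Worse, even if $w$ did solve (\ref{auxiliar}), Lemma \ref{lemme-0} would only identify it with $\chi(t;(\tau,\xi))=H[t,x(t)]-x(t)$, which is not zero in general; so uniqueness of the bounded solution of (\ref{auxiliar}) cannot deliver $w\equiv 0$, and your sentence ``one finds that $L[t,y(t)]$ reproduces the solution $x(t)$ itself'' is exactly the assertion to be proved, not a consequence of the bookkeeping you describe.

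The missing ingredient is the contraction condition (\ref{FPT}). The paper's route is: apply the integral representation (\ref{HomeL1}) to the solution $y(t)=H[t,x(t)]$ of (\ref{lineal-efes}), so that $J[t,x(t)]:=L[t,H[t,x(t)]]$ satisfies
\begin{displaymath}
J[t,x(t)]=H[t,x(t)]+\int_{-\infty}^{\infty}\widetilde{G}(t,s)f(s,J[s,x(s)],J[\gamma(s),x(\gamma(s))])\,ds,
\end{displaymath}
then subtract the representation (\ref{HomeH1}) of $H[t,x(t)]$ to obtain
\begin{displaymath}
J[t,x(t)]-x(t)=\int_{\mathbb{R}}\widetilde{G}(t,s)\{f(s,J[s,x(s)],J[\gamma(s),x(\gamma(s))])-f(s,x(s),x(\gamma(s)))\}\,ds.
\end{displaymath}
Assumption \textbf{(A3)} and Proposition \ref{cotas-green} then give $|J[\cdot,x(\cdot)]-x(\cdot)|_{\infty}\leq 2K\rho^{*}\alpha^{-1}(\ell_{1}+\ell_{2})\,|J[\cdot,x(\cdot)]-x(\cdot)|_{\infty}$, and (\ref{FPT}) forces this norm to vanish. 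Some quantitative step of this kind is unavoidable for the direction $L\circ H=\mathrm{id}$: boundedness of the difference plus uniqueness of bounded solutions of auxiliary equations does not suffice.
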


\begin{proof}
We will prove only the first identity. The other one can be deduced similarly and is given for the reader. 

Let $t\mapsto x(t)=x(t,\tau,\xi)$ be a solution of (\ref{sistema1}). By using 
Lemma \ref{lema1}, we know that $H[t,x(t)]$ is solution of (\ref{lineal-efes}). Moreover, by Lemma \ref{lema2}, we can see that
$t\mapsto J[t,x(t)]=L[t,H[t,x(t)]]$ is 
solution of (\ref{sistema1}). Notice that
\begin{displaymath}
\begin{array}{rcl}
J[t,x(t)] & = & H[t,x(t)]+\vartheta(t;(t,H[t,x(t)]))\\
\end{array}
\end{displaymath}
where $t\mapsto \vartheta(t;(t,H[t,x(t)]))$ is the unique bounded solution of the system 
\begin{displaymath}
\begin{array}{rcl}
\dot{w}(t)&=&A(t)w(t)+A_{0}(t)w(\gamma(t))\\\\
&&+f(t,H[t,x(t)]+w(t),H[\gamma(t),x(\gamma(t))]+w(\gamma(t))). 
\end{array}
\end{displaymath}

By using Lemma \ref{lema2} with $H[t,x(t)]$ instead of $y(t)$, we have that
\begin{displaymath}
\begin{array}{rcl}
J[t,x(t)]&=&\displaystyle H[t,x(t)]+\int_{-\infty}^{\infty}\widetilde{G}(t,s)f(s,J[s,x(s)],J[\gamma(s),x(\gamma(s))])\,ds.\\\\
\end{array}
\end{displaymath}

Upon inserting (\ref{HomeH1}) in the identity above, we have that
\begin{displaymath}
J[t,x(t)]-x(t)=\int_{\mathbb{R}}\widetilde{G}(t,s)\{f(s,J[s,x(s)],J[\gamma(s),x(\gamma(s))])-f(s,x(s),x(\gamma(s)))\}\,ds, 
\end{displaymath}
which implies the inequality
\begin{displaymath}
|J[t,x(t)]-x(t)|\leq \frac{2K\rho^{*}}{\alpha}\big(\ell_{1}+\ell_{2}\big)|J[\cdot,x(\cdot)]-x(\cdot)|_{\infty}
\end{displaymath}
and (\ref{FPT}) implies that 
\begin{displaymath}
J[t,x(t)]=L[t,H[t,x(t)]]=x(t).
\end{displaymath}
\end{proof}

The reader can notice (see also Definition \ref{TopEq}) that the notation $H[\cdot,\cdot]$ and $L[\cdot,\cdot]$
is reserved to the case when $H$ and $L$ are respectively defined on solution of (\ref{sistema1}) and (\ref{lineal-efes}).

\begin{lemma}
\label{invertibilidad}
For any fixed $t$ and any couple $(\xi,\nu)\in \mathbb{R}^{n}\times\mathbb{R}^{n}$, it follows that
\begin{equation}
\label{inve1}
L(t,H(t,\xi))=\xi 
\end{equation}
and
\begin{equation}
\label{inve2}
H(t,L(t,\nu))=\nu.
\end{equation}
\end{lemma}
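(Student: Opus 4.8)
The plan is to prove the two pointwise identities in Lemma~\ref{invertibilidad} by reducing them to the composition identities already established in Lemma~\ref{composition}. The crucial observation is that the maps $H$ and $L$ defined in Lemmas~\ref{lema1} and \ref{lema2} satisfy, by construction, the relations $H(t,\xi)=\xi+\chi(t;(t,\xi))$ and $L(t,\nu)=\nu+\vartheta(t;(t,\nu))$, where the notation $H[t,x(t)]$ and $L[t,y(t)]$ (with square brackets) was reserved for the case where the second argument is evaluated along a solution. Lemma~\ref{composition} gives the composition identities $L[t,H[t,x(t)]]=x(t)$ and $H[t,L[t,y(t)]]=y(t)$ \emph{along solutions}, and the task here is simply to transfer these to arbitrary points $(\xi,\nu)$ at a fixed instant.

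First I would prove \eqref{inve1}. Fix $t$ and $\xi\in\mathbb{R}^{n}$, and let $x(s)=x(s,t,\xi)$ be the unique solution of \eqref{sistema1} with $x(t)=\xi$; its existence and uniqueness are guaranteed by \eqref{schema0} as noted in Remark~\ref{Aboutcon}(iii). Evaluating the round-bracket definition at the base point $s=t$ gives $H(t,\xi)=H[t,x(t)]$, since $x(t,t,\xi)=\xi$ and the reserved square-bracket notation agrees with the round-bracket map at $s=t$. Now $H[t,x(t)]$ is a solution of \eqref{lineal-efes} by Lemma~\ref{lema1}(ii), so applying the reserved notation for $L$ along this linear solution and invoking the first identity of Lemma~\ref{composition} yields
\begin{displaymath}
L(t,H(t,\xi))=L[t,H[t,x(t)]]=x(t)=\xi.
\end{displaymath}
The only subtlety to check carefully is that $L(t,H(t,\xi))$, the round-bracket composition of the two \emph{functions} at the fixed instant $t$, really coincides with $L[t,H[t,x(t)]]$, the square-bracket composition \emph{along the solution}; this follows because the value $H(t,\xi)$ equals the value at time $t$ of the linear solution $y(s):=H[s,x(s)]$, and $L$ evaluated at a point on a solution agrees with the reserved notation by the same base-point identity $y(t,t,H(t,\xi))=H(t,\xi)$.

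Then I would prove \eqref{inve2} in the mirror-image fashion. Fix $t$ and $\nu$, let $y(s)=y(s,t,\nu)$ be the solution of \eqref{lineal-efes} with $y(t)=\nu$, observe $L(t,\nu)=L[t,y(t)]$, note that $L[t,y(t)]$ solves \eqref{sistema1} by Lemma~\ref{lema2}(ii), and apply the second composition identity of Lemma~\ref{composition} to conclude $H(t,L(t,\nu))=H[t,L[t,y(t)]]=y(t)=\nu$. \textbf{The main obstacle} I anticipate is bookkeeping rather than analysis: one must verify scrupulously that the two notational conventions (round brackets for the abstract functions $H,L$ and square brackets for their restrictions along solutions) are interchanged correctly at the base point, so that the fixed-$t$ functional composition in \eqref{inve1}--\eqref{inve2} genuinely matches the along-solution composition already resolved in Lemma~\ref{composition}. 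Once that identification is made precise, no new estimate or fixed-point argument is needed—the statement is a clean corollary of Lemma~\ref{composition} combined with uniqueness of solutions of \eqref{sistema1}.
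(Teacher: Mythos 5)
Your proposal is correct and follows the paper's own argument exactly: the paper also deduces \eqref{inve1} by applying Lemma~\ref{composition} along the solution $x(\cdot,\tau,\xi)$ and then specializing to $\tau=t$, where the square-bracket and round-bracket notations coincide, with \eqref{inve2} handled symmetrically. Your extra care about the base-point identification is a faithful elaboration of the same one-line reduction.
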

\begin{proof}
By using Lemma \ref{composition}, we have that
\begin{displaymath}
L[t,H[x(t,\tau,\xi)]=x(t,\tau,\xi) \quad \textnormal{for any} \quad t\in \mathbb{R}.
\end{displaymath}

Now, if we consider the particular case  $\tau=t$, we obtain (\ref{inve1}). The identity (\ref{inve2}) can be deduced
similarly.
\end{proof}

\begin{remark}
\label{tofin}
Notice that the maps $\xi  \mapsto H(t,\xi)$ and $\nu \mapsto L(t,\nu)$ satisfy properties
(ii) and (iii) of Definition \ref{TopEq}, which is a consequence of Lemmatas \ref{lema1}--\ref{composition}. In addition,
Lemma \ref{invertibilidad} says that $u\mapsto L(t,u)=H^{-1}(t,u)$ for any $t\in \mathbb{R}$. In consequence, the last step is to prove
the uniform continuity of the maps, which will be made in the next two sections. 
\end{remark}

\section{Continuity with respect to initial conditions}

The following result generalizes the classical Gronwall's inequality to the DEPCAG framework:
\begin{proposition}\textnormal{(}Gronwall's type inequality, \cite[Lemma 2.1]{Chiu-2010}\textnormal{)}
\label{LGKS}
Let $u$,$\tilde{\eta}_{i}\colon \mathbb{R}\to [0,+\infty)$ $i=1,2$ be continuous functions 
and $\tilde{C}>0$. Suppose that for all $t\geq \tau$, the inequality
\begin{displaymath}
u(t)\leq \tilde{C} + \int_{\tau}^{t}\{\tilde{\eta}_{1}(s)u(s)+\tilde{\eta}_{2}(s)u(\gamma(s))\}\,ds 
\end{displaymath}
holds. If
\begin{displaymath}
w=\sup\limits_{i\in \mathbb{N}}\int_{t_{i}}^{\zeta_{i}}\tilde{\eta}_{2}(s)e^{\int_{s}^{\zeta_{i}}\tilde{\eta}_{1}(r)\,dr}\,ds<1,
\end{displaymath}
then for any $t\geq \tau$ it follows that
\begin{displaymath}
\displaystyle
u(t)\leq \tilde{C}\exp\Big(\int_{\tau}^{t}\tilde{\eta}_{1}(s)\,ds+\frac{1}{1-w}\int_{\tau}^{t}\Big[\tilde{\eta}_{2}(s)e^{\int_{t_{i}(s)}^{\gamma(s)}\tilde{\eta}_{1}(r)\,dr}\Big]\,ds\Big). 
\end{displaymath}
\end{proposition}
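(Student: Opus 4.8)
The plan is to bound the continuous, nondecreasing majorant
\[
v(t)=\tilde{C}+\int_{\tau}^{t}\{\tilde{\eta}_{1}(s)u(s)+\tilde{\eta}_{2}(s)u(\gamma(s))\}\,ds,
\]
which satisfies $u(t)\le v(t)$, $v(\tau)=\tilde{C}$, and, since $\tilde{\eta}_{1},\tilde{\eta}_{2}\ge 0$ and $u\le v$, the differential inequality $v'(t)\le \tilde{\eta}_{1}(t)v(t)+\tilde{\eta}_{2}(t)u(\gamma(t))$ for almost every $t$. The decisive structural fact is that on each interval $I_{k}=[t_{k},t_{k+1})$ the argument $\gamma(s)=\zeta_{k}$ is \emph{frozen}, so that $u(\gamma(s))=u(\zeta_{k})$ acts there as an unknown constant; the proof therefore proceeds interval by interval and then chains the estimates using the continuity of $v$.

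First I would work on a single interval $I_{k}$, writing $a_{k}=\max\{t_{k},\tau\}$ for the left endpoint of the part of $I_{k}$ over which we integrate. Treating $u(\zeta_{k})$ as a constant and integrating the linear inequality above with the integrating factor $\exp(-\int_{a_{k}}^{t}\tilde{\eta}_{1})$ yields
\[
v(t)\le v(a_{k})e^{\int_{a_{k}}^{t}\tilde{\eta}_{1}}+u(\zeta_{k})\int_{a_{k}}^{t}\tilde{\eta}_{2}(s)e^{\int_{s}^{t}\tilde{\eta}_{1}(r)\,dr}\,ds
\]
for $t\in I_{k}$. The key step is then to control the frozen value itself: setting $t=\zeta_{k}$, using $u(\zeta_{k})\le v(\zeta_{k})$ and noting that $\int_{a_{k}}^{\zeta_{k}}\tilde{\eta}_{2}(s)e^{\int_{s}^{\zeta_{k}}\tilde{\eta}_{1}}\,ds\le w$, I can \emph{solve} for $v(\zeta_{k})$ precisely because $w<1$, obtaining $v(\zeta_{k})\le (1-w)^{-1}v(a_{k})e^{\int_{a_{k}}^{\zeta_{k}}\tilde{\eta}_{1}}$. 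Substituting this back, bounding $e^{\int_{s}^{\zeta_{k}}\tilde{\eta}_{1}}\le e^{\int_{t_{k}}^{\zeta_{k}}\tilde{\eta}_{1}}=e^{\int_{t_{i(s)}}^{\gamma(s)}\tilde{\eta}_{1}}$, and using $1+x\le e^{x}$ gives the clean local estimate
\[
v(t)\le v(a_{k})\exp\Big(\int_{a_{k}}^{t}\tilde{\eta}_{1}+\frac{1}{1-w}\int_{a_{k}}^{t}\tilde{\eta}_{2}(s)e^{\int_{t_{i(s)}}^{\gamma(s)}\tilde{\eta}_{1}(r)\,dr}\,ds\Big).
\]

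Finally I would chain these local bounds. If $\tau\in I_{k_{0}}$ and $t\in I_{k_{1}}$, applying the local estimate successively on $I_{k_{0}},\dots,I_{k_{1}}$ and using that $v$ is continuous at each node $t_{k}$ makes the factors multiply; the exponents add up to integrals over all of $[\tau,t]$, and with $v(\tau)=\tilde{C}$ together with $u\le v$ this is exactly the asserted inequality. The main obstacle is the \emph{advanced} case, i.e. when $\zeta_{k}$ lies ahead of the running point $t$ inside $I_{k}$: there $v(\zeta_{k})\ge v(t)$, so one cannot dominate the frozen value by the current value, and it is exactly this difficulty that forces the hypothesis $w<1$ and the fixed-point-type bootstrap above. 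A related boundary point requiring care is the first interval when $\tau>\zeta_{k_{0}}$ (so $\gamma(s)=\zeta_{k_{0}}<\tau$ for the relevant $s$); there $\gamma(s)\le s$ throughout, the advanced phenomenon is absent, and the estimate follows from the classical Gronwall argument without invoking the bootstrap.
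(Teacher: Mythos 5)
First, a point of comparison: the paper does not prove this proposition at all --- it is imported verbatim from \cite[Lemma 2.1]{Chiu-2010} --- so your proof can only be measured against that external source, whose argument is essentially the one you reconstruct. Your main line is correct and is the standard one: majorize $u$ by the nondecreasing function $v$, use that $\gamma$ is frozen at $\zeta_{k}$ on each $I_{k}$ to reduce to a linear inequality with the unknown constant $u(\zeta_{k})$, evaluate at $t=\zeta_{k}$ and use $u(\zeta_{k})\le v(\zeta_{k})$ together with $w<1$ to solve for $v(\zeta_{k})$, then substitute back, bound $e^{\int_{s}^{\zeta_{k}}\tilde{\eta}_{1}}\le e^{\int_{t_{i(s)}}^{\gamma(s)}\tilde{\eta}_{1}}$ (valid whether $s$ lies before or after $\zeta_{k}$, since $\tilde{\eta}_{1}\ge 0$), apply $1+x\le e^{x}$, and chain through the nodes by continuity of $v$. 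The local exponents telescope exactly to the asserted bound, so the advanced case is handled correctly.

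The genuine gap is the initial sub-case $\tau>\zeta_{i(\tau)}$, which you flag but then dispose of incorrectly. There $\gamma(s)=\zeta_{i(\tau)}<\tau$ for $s\in[\tau,t_{i(\tau)+1})$, so $u(\gamma(s))$ is a value of $u$ at a point strictly to the left of $\tau$, and the hypotheses give no bound on it: the integral inequality controls $u$ only on $[\tau,\infty)$, and $v$ is a majorant of $u$ only there. The classical Gronwall argument needs $u(\gamma(s))\le v(s)$, which is precisely what is unavailable; it only yields $u(t)\le\big(\tilde{C}+u(\zeta_{i(\tau)})\int_{\tau}^{t}\tilde{\eta}_{2}\big)e^{\int_{\tau}^{t}\tilde{\eta}_{1}}$, and by taking $u(\zeta_{i(\tau)})$ large (e.g.\ with $\tilde{\eta}_{1}\equiv 0$ and $\tilde{\eta}_{2}$ a small constant so that $w<1$) one violates the stated conclusion for $t$ near $\tau$. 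So this sub-case is not covered by your argument --- indeed the proposition as literally stated fails there. Your proof is complete under the restriction $\tau\le\zeta_{i(\tau)}$ (i.e.\ $\tau\in I_{i(\tau)}^{+}$, the same normalization the paper imposes in Proposition \ref{vp}), or under an additional hypothesis controlling $u$ to the left of $\tau$; you should state one of these rather than assert that the case follows from the classical argument.
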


Similarly as in an ODE context, the Gronwall's inequality is a key tool in the proof of continuity with respect to the initial conditions:
\begin{lemma}
\label{CRCI}
Let $t\mapsto x(t,\tau,\xi)$ and $t \mapsto x(t,\tau,\xi')$ be the solutions of \textnormal{(\ref{sistema1})} passing respectively 
through $\xi$ and $\xi'$ at $t=\tau$. If \textnormal{(\ref{schema0})} is verified, then it follows that
\begin{equation}
\label{schema1}
|x(t,\tau,\xi')-x(t,\tau,\xi)|\leq |\xi-\xi'|e^{p_{1}|t-\tau|}
\end{equation}
where $p_{1}$ is defined by
\begin{equation}
\label{seuil}
p_{1}=\eta_{1}+\frac{\eta_{2}e^{\eta_{1}\theta}}{1-v} \quad \textnormal{with} \quad  \eta_{1}=M+\ell_{1}, \quad \eta_{2}=M_{0}+\ell_{2} 
\end{equation}
and $v\in [0,1)$ is defined by \textnormal{(\ref{schema0})}.
\end{lemma}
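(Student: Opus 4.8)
The plan is to reduce the statement to the DEPCAG Gronwall inequality of Proposition~\ref{LGKS}. First I would set $u(t)=|x(t,\tau,\xi')-x(t,\tau,\xi)|$ and write each solution in integral form: since a solution of (\ref{sistema1}) is continuous and satisfies the equation off the nodes $t_i$, integrating from $\tau$ to $t$ gives, for $t\geq\tau$,
\[
x(t,\tau,\xi)=\xi+\int_{\tau}^{t}\big[A(s)x(s,\tau,\xi)+A_{0}(s)x(\gamma(s),\tau,\xi)+f(s,x(s,\tau,\xi),x(\gamma(s),\tau,\xi))\big]\,ds,
\]
and similarly for $\xi'$. Subtracting and taking the vector norm, then using \textbf{(A1)} to bound $||A(s)||\leq M$, $||A_{0}(s)||\leq M_{0}$ and \textbf{(A3)} to bound the difference of the $f$ terms by $\ell_{1}u(s)+\ell_{2}u(\gamma(s))$, I would obtain
\[
u(t)\leq |\xi-\xi'|+\int_{\tau}^{t}\big[\eta_{1}u(s)+\eta_{2}u(\gamma(s))\big]\,ds,\qquad t\geq\tau,
\]
with $\eta_{1}=M+\ell_{1}$ and $\eta_{2}=M_{0}+\ell_{2}$ as in (\ref{seuil}). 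This is exactly the hypothesis of Proposition~\ref{LGKS} with $\tilde{C}=|\xi-\xi'|$ and constant weights $\tilde{\eta}_{1}\equiv\eta_{1}$, $\tilde{\eta}_{2}\equiv\eta_{2}$.

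Second, I would verify the smallness condition $w<1$ needed by Proposition~\ref{LGKS}. With constant weights the relevant integral is explicit, and since $\zeta_{i}-t_{i}\leq t_{i+1}-t_{i}\leq\theta$ by \textbf{(B1)} and \textbf{(B4)} and $x\mapsto(e^{\eta_{1}x}-1)/\eta_{1}$ is increasing,
\[
w=\sup_{i}\,\eta_{2}\,\frac{e^{\eta_{1}(\zeta_{i}-t_{i})}-1}{\eta_{1}}\leq(M_{0}+\ell_{2})\,\frac{e^{(M+\ell_{1})\theta}-1}{M+\ell_{1}}=F_{1}(\theta)(M_{0}+\ell_{2})\theta=v,
\]
which is $<1$ by (\ref{schema0}); in particular $1/(1-w)\leq 1/(1-v)$.

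Third, I would read off the conclusion of Proposition~\ref{LGKS}. The first exponent term is $\int_{\tau}^{t}\eta_{1}\,ds=\eta_{1}(t-\tau)$. For the second, using $\gamma(s)=\zeta_{i(s)}$ together with $\gamma(s)-t_{i(s)}=\zeta_{i(s)}-t_{i(s)}\leq\theta$ gives $e^{\int_{t_{i(s)}}^{\gamma(s)}\eta_{1}\,dr}\leq e^{\eta_{1}\theta}$, so the second integral is bounded by $\eta_{2}e^{\eta_{1}\theta}(t-\tau)$. Combining with $w\leq v$ yields
\[
u(t)\leq|\xi-\xi'|\exp\Big(\big(\eta_{1}+\tfrac{\eta_{2}e^{\eta_{1}\theta}}{1-v}\big)(t-\tau)\Big)=|\xi-\xi'|\,e^{p_{1}(t-\tau)},\qquad t\geq\tau,
\]
which is (\ref{schema1}) for $t\geq\tau$, with $p_{1}$ as in (\ref{seuil}).

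Finally, the case $t<\tau$ is where I expect the main obstacle, since Proposition~\ref{LGKS} is stated only for $t\geq\tau$. The clean fix is to run the estimate backward in time: for $t\leq\tau$ the integral identity yields the backward inequality $u(t)\leq|\xi-\xi'|+\int_{t}^{\tau}[\eta_{1}u(s)+\eta_{2}u(\gamma(s))]\,ds$, and I would invoke the reversed-time analogue of Proposition~\ref{LGKS} obtained through the substitution $s\mapsto\tau-s$. This reflection sends $\{t_{i}\}$, $\{\zeta_{i}\}$ to an admissible partition with the same mesh bound $\theta$; the only delicate point is that it interchanges the advanced and delayed parts of $\gamma$, but since all estimates only use $\zeta_{i}-t_{i}\leq\theta$ and $t_{i+1}-\zeta_{i}\leq\theta$, the constant $v$ is unchanged. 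This gives $u(t)\leq|\xi-\xi'|\,e^{p_{1}(\tau-t)}$ for $t\leq\tau$, and the two cases merge into the stated bound $|\xi-\xi'|\,e^{p_{1}|t-\tau|}$.
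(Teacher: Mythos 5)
Your proof is correct and follows essentially the same route as the paper: the same integral inequality with constant weights $\eta_{1}=M+\ell_{1}$, $\eta_{2}=M_{0}+\ell_{2}$, the same verification that (\ref{schema0}) yields the smallness condition $w\leq v<1$ of Proposition \ref{LGKS}, and the same bound $e^{\int_{t_{i(s)}}^{\gamma(s)}\eta_{1}\,dr}\leq e^{\eta_{1}\theta}$ leading to the exponent $p_{1}$. The only cosmetic differences are that you apply Proposition \ref{LGKS} once globally for $t\geq\tau$ whereas the paper iterates it interval by interval via (\ref{schema3}) and uniqueness, and that you sketch a time-reversal argument for $t<\tau$ where the paper leaves that case to the reader.
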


\begin{proof}
Without loss of generality, we will assume that $t>\tau$, the case corresponding to $t<\tau$ can be proved similary and is left to the reader.

Firstly, let us consider the case $t_{i}<\tau<t<t_{i+1}$ for some $i\in \mathbb{Z}$, then notice that \textbf{(A1)} and \textbf{(A3)} 
imply
\begin{displaymath}
\begin{array}{rcl}
|x(t,\tau,\xi')-x(t,\tau,\xi)|&\leq & \displaystyle |\xi-\xi'|+\int_{\tau}^{t}\big\{\eta_{1}|x(s,\tau,\xi')-x(s,\tau,\xi)|\\\\
&&+\eta_{2}|x(\gamma(s),\tau,\xi')-x(\gamma(s),\tau,\xi)|\big\}\,ds.
\end{array}
\end{displaymath}

As (\ref{schema0}) implies that
$$
\int_{t_{i}}^{\zeta_{i}}\eta_{2}e^{\eta_{1}(\zeta_{i}-s)}\,ds=\frac{\eta_{2}}{\eta_{1}}\Big(e^{\eta_{1}(\zeta_{i}-t_{i})}-1\Big)\leq v<1,
$$
then Proposition \ref{LGKS} combined with
$\zeta_{i}-t_{i}\leq \theta$ for any $i\in \mathbb{Z}$ imply (\ref{schema1}) for any $t\in (\tau,t_{i+1}]$. In particular, at $t=t_{i+1}$, 
we have that
\begin{equation}
\label{schema3}
|x(t_{i+1},\tau,\xi')-x(t_{i+1},\tau,\xi)|\leq |\xi'-\xi|\exp\left(\left\{\eta_{1}+\frac{\eta_{2}e^{\eta_{1}\theta}}{1-v}\right\}(t_{i+1}-\tau)\right).
\end{equation}

Secondly, let us consider $t\in (t_{i+1},t_{i+2}]$ and notice that uniqueness of the solutions imply
\begin{equation}
\label{uni1}
x(t,t_{i+1},x(t_{i+1},\tau,\xi))=x(t,\tau,\xi),
\end{equation}
and
\begin{equation}
\label{uni2}
x(\gamma(t),t_{i+1},x(t_{i+1},\tau,\xi))=x(\gamma(t),\tau,\xi),
\end{equation}

As in the previous step, we can observe that
\begin{equation}
\label{G2}
\begin{array}{rcl}
|x(t,\tau,\xi')-x(t,\tau,\xi)|&\leq & \displaystyle |x(t_{i+1},\tau,\xi')-x(t_{i+1},\tau,\xi)|\\\\
&&+\displaystyle \int_{t_{i+1}}^{t}\big\{\eta_{1}|x(s,\tau,\xi')-x(s,\tau,\xi)|\\\\
&&+\eta_{2}|x(\gamma(s),\tau,\xi')-x(\gamma(s),\tau,\xi)|\big\}\,ds
\end{array}
\end{equation}
for any $t\in (t_{i+1},t_{i+2}]$. By applying the Gronwall's type inequality to (\ref{G2}) combined 
with (\ref{schema3}) and (\ref{uni1}), we can deduce that
\begin{displaymath}
\begin{array}{rcl}
|x(t,\tau,\xi')-x(t,\tau,\xi)| &\leq & |x(t_{i+1},\tau,\xi')-x(t_{i+1},\tau,\xi)|\exp\left(\left\{\eta_{1}+\frac{\eta_{2}e^{\eta_{1}\theta}}{1-v}\right\}(t-t_{i+1})\right)\\\\
&\leq & |\xi'-\xi|\exp\left(\left\{\eta_{1}+\frac{\eta_{2}e^{\eta_{1}\theta}}{1-v}\right\}(t-\tau)\right)
\end{array}
\end{displaymath} 
for any $t\in (t_{i+1},t_{i+2}]$ and the reader can verify that (\ref{schema1}) is valid for any $t\geq \tau$ in a recursive way.
\end{proof}

The next results are similar to the previous one and its proof is left to the reader.
\begin{lemma}
\label{CRCI2}
Let $t\mapsto y(t,\tau,\nu)$ and $t \mapsto y(t,\tau,\nu')$ be the solutions of \textnormal{(\ref{lineal-efes})} passing respectively 
through $\nu$ and $\nu'$ at $t=\tau$. If \textnormal{(\ref{schema0B})} is satisfied, then:
\begin{equation}
\label{schema1B}
|y(t,\tau,\nu')-y(t,\tau,\nu)|\leq |\nu-\nu'|e^{p_{2}|t-\tau|} \quad \textnormal{with} \quad 
p_{2}=M+\frac{M_{0}e^{M\theta}}{1-\tilde{v}}, 
\end{equation}
where $\tilde{v}\in [0,1)$ is defined by \textnormal{(\ref{schema0B})}.
\end{lemma}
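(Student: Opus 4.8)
The plan is to transcribe the argument of Lemma~\ref{CRCI} to the linear equation (\ref{lineal-efes}), which is nothing but (\ref{sistema1}) with $f\equiv 0$. Formally this amounts to setting $\ell_{1}=\ell_{2}=0$ throughout that proof: the constants $\eta_{1}=M+\ell_{1}$ and $\eta_{2}=M_{0}+\ell_{2}$ collapse to $\eta_{1}=M$ and $\eta_{2}=M_{0}$, the hypothesis $v<1$ of (\ref{schema0}) is replaced by $\tilde{v}<1$ of (\ref{schema0B}), and the resulting exponent $p_{1}$ becomes $p_{2}=M+M_{0}e^{M\theta}/(1-\tilde{v})$. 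As before I would treat only $t>\tau$, the case $t<\tau$ being entirely symmetric, and I would use that \textbf{(C)} guarantees existence and uniqueness of the solutions $y(\cdot,\tau,\nu)$ through the transition matrix built in Proposition~\ref{pitaron}.

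First I would work on a single interval. Since $y(\cdot,\tau,\nu)$ is continuous and solves (\ref{lineal-efes}) with $\gamma(s)=\zeta_{i}$ constant on $I_{i}$, the difference $\delta(t)=y(t,\tau,\nu')-y(t,\tau,\nu)$ satisfies, for $t_{i}<\tau<t<t_{i+1}$,
\[
|\delta(t)|\leq|\nu-\nu'|+\int_{\tau}^{t}\bigl\{M|\delta(s)|+M_{0}|\delta(\gamma(s))|\bigr\}\,ds,
\]
using \textbf{(A1)}. The point is to invoke Proposition~\ref{LGKS} with $\tilde{\eta}_{1}=M$, $\tilde{\eta}_{2}=M_{0}$ and $\tilde{C}=|\nu-\nu'|$; its hypothesis $w<1$ is exactly (\ref{schema0B}), because
\[
\int_{t_{i}}^{\zeta_{i}}M_{0}e^{M(\zeta_{i}-s)}\,ds=\frac{M_{0}}{M}\bigl(e^{M(\zeta_{i}-t_{i})}-1\bigr)\leq\frac{M_{0}}{M}\bigl(e^{M\theta}-1\bigr)=M_{0}\theta F_{0}(\theta)=\tilde{v}<1,
\]
where the middle identity is precisely how $F_{0}(\theta)$ was defined. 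Bounding $\int_{t_{i(s)}}^{\gamma(s)}M\,dr\leq M\theta$ in the conclusion of Proposition~\ref{LGKS} then yields (\ref{schema1B}) for every $t\in(\tau,t_{i+1}]$, and in particular the endpoint estimate $|\delta(t_{i+1})|\leq|\nu-\nu'|e^{p_{2}(t_{i+1}-\tau)}$.

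Finally I would bootstrap across consecutive intervals exactly as in Lemma~\ref{CRCI}: on $(t_{i+1},t_{i+2}]$ the uniqueness relation $y(t,t_{i+1},y(t_{i+1},\tau,\nu))=y(t,\tau,\nu)$ lets me restart the estimate from $t_{i+1}$ with the new base value, apply Proposition~\ref{LGKS} once more, and multiply the endpoint bound already obtained by the factor $e^{p_{2}(t-t_{i+1})}$; an induction over the partition then delivers (\ref{schema1B}) for all $t\geq\tau$. The only step requiring genuine care is this interval-by-interval propagation: because the generalized argument may be \emph{advanced}, $\gamma(s)=\zeta_{i(s)}$ can lie ahead of $s$, so one cannot feed Proposition~\ref{LGKS} the whole half-line at once and instead must keep $\gamma(s)$ inside the interval currently under control before gluing the pieces with uniqueness. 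Everything else is the $\ell_{1}=\ell_{2}=0$ specialization of the preceding lemma.
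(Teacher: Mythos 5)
Your proposal is correct and follows exactly the route the paper intends: the paper explicitly leaves this proof to the reader as being ``similar to the previous one,'' i.e.\ the $\ell_{1}=\ell_{2}=0$ specialization of Lemma \ref{CRCI}, which is precisely what you carry out (Gronwall via Proposition \ref{LGKS} with $\tilde{\eta}_{1}=M$, $\tilde{\eta}_{2}=M_{0}$, the check that $\frac{M_{0}}{M}(e^{M\theta}-1)=\tilde{v}<1$ matches (\ref{schema0B}), and interval-by-interval propagation using uniqueness). No gaps.
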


\begin{lemma}
\label{CRCI3}
Let $t\mapsto x(t,\tau,\xi)$ and $t\mapsto x(t,\tau,\xi')$ (resp. $t\mapsto y(t,\tau,\nu)$ and $t \mapsto y(t,\tau,\nu')$) be the solutions 
of \textnormal{(\ref{sistema1-cor2})} (resp.\textnormal{(\ref{lineal-efes-cor2})}) passing  
through $\xi$ and $\xi'$ (resp. $\nu$ and $\nu'$) at $t=\tau$. If \textnormal{(\ref{schema0-cor2})} 
and \textnormal{(\ref{schema0B-cor2})} are satisfied, then:
\begin{equation}
\label{ef-00}
|x(t,\tau,\xi')-x(t,\tau,\xi)|\leq |\xi-\xi'|e^{\tilde{p}_{1}|t-\tau|} \quad \textnormal{with} \quad
\tilde{p}_{1}=\ell_{1}+\frac{(M_{0}+\ell_{2})e^{\ell_{1}\theta}}{1-\tilde{v}_{0}}, 
\end{equation}
and
\begin{equation}
\label{schema1B-00}
|y(t,\tau,\nu')-y(t,\tau,\nu)|\leq |\nu-\nu'|e^{\tilde{p}_{2}|t-\tau|} \quad \textnormal{with} \quad
\tilde{p}_{2}=\frac{M_{0}}{1-\tilde{u}_{0}}, 
\end{equation}
where $\tilde{v}_{0}\in [0,1)$ and $\tilde{u}_{0}\in [0,1)$ are respectively defined by \textnormal{(\ref{schema0-cor2})} and \textnormal{(\ref{schema0B-cor2})}.
\end{lemma}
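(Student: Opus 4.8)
The plan is to follow the pattern of Lemmas \ref{CRCI} and \ref{CRCI2} verbatim, specializing the computation to the case $A(t)\equiv 0$. Assume without loss of generality that $t>\tau$, and begin with $\tau,t$ in a single interval, say $t_{i}<\tau<t<t_{i+1}$. Writing the difference of two solutions of (\ref{sistema1-cor2}) in integral form and invoking \textbf{(A1)} together with \textbf{(A3)}, one obtains
\begin{displaymath}
|x(t,\tau,\xi')-x(t,\tau,\xi)|\leq |\xi-\xi'|+\int_{\tau}^{t}\big\{\ell_{1}|x(s,\tau,\xi')-x(s,\tau,\xi)|+(M_{0}+\ell_{2})|x(\gamma(s),\tau,\xi')-x(\gamma(s),\tau,\xi)|\big\}\,ds,
\end{displaymath}
which is precisely the hypothesis of Proposition \ref{LGKS} with $\tilde{\eta}_{1}=\ell_{1}$ and $\tilde{\eta}_{2}=M_{0}+\ell_{2}$, i.e. the choice $M=0$ in the constants $\eta_{1},\eta_{2}$ of Lemma \ref{CRCI}.

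The first step is to check the smallness condition $w<1$ of Proposition \ref{LGKS}. A direct computation gives $\int_{t_{i}}^{\zeta_{i}}(M_{0}+\ell_{2})e^{\ell_{1}(\zeta_{i}-s)}\,ds=\frac{M_{0}+\ell_{2}}{\ell_{1}}\big(e^{\ell_{1}(\zeta_{i}-t_{i})}-1\big)$, and since $\zeta_{i}-t_{i}\leq\theta$ this is bounded by $(M_{0}+\ell_{2})\theta\,\tilde{F}_{1}(\theta)=\tilde{v}_{0}<1$ by (\ref{schema0-cor2}); hence $w\leq \tilde{v}_{0}<1$. Applying Proposition \ref{LGKS} and using $\int_{t_{i}}^{\zeta_{i}}\tilde{\eta}_{1}\,dr\leq \ell_{1}\theta$ in its conclusion yields (\ref{ef-00}) on $(\tau,t_{i+1}]$, the rate $\frac{1}{1-w}$ being dominated by $\frac{1}{1-\tilde{v}_{0}}$; in particular this controls the deviation at $t=t_{i+1}$.

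Next I would pass to the successive intervals recursively, exactly as in the second half of the proof of Lemma \ref{CRCI}: using uniqueness identities of the type (\ref{uni1})--(\ref{uni2}) to restart the Gronwall estimate from $t_{i+1}$ with initial deviation $|x(t_{i+1},\tau,\xi')-x(t_{i+1},\tau,\xi)|$, and multiplying the successive exponential factors. This produces (\ref{ef-00}) for all $t\geq\tau$ with constant $\tilde{p}_{1}=\ell_{1}+\frac{(M_{0}+\ell_{2})e^{\ell_{1}\theta}}{1-\tilde{v}_{0}}$, and the case $t<\tau$ is symmetric.

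The estimate (\ref{schema1B-00}) for the linear equation (\ref{lineal-efes-cor2}) follows by the same argument with $\tilde{\eta}_{1}=0$ and $\tilde{\eta}_{2}=M_{0}$: now $w=\sup_{i}\int_{t_{i}}^{\zeta_{i}}M_{0}\,ds\leq M_{0}\theta\leq (M_{0}+\ell_{2})\theta=\tilde{u}_{0}<1$ by (\ref{schema0B-cor2}), the exponential weight in Proposition \ref{LGKS} collapses to $1$, and the rate reduces to $\frac{M_{0}}{1-w}\leq\frac{M_{0}}{1-\tilde{u}_{0}}=\tilde{p}_{2}$. Since both arguments are word-for-word specializations of the already-proved Lemmas \ref{CRCI} and \ref{CRCI2}, I do not expect any genuine obstacle; the only points requiring care are matching the smallness thresholds $w\leq\tilde{v}_{0}$ and $w\leq\tilde{u}_{0}$ to the hypotheses (\ref{schema0-cor2})--(\ref{schema0B-cor2}) through $\zeta_{i}-t_{i}\leq\theta$, and the monotonicity $\frac{1}{1-w}\leq\frac{1}{1-\tilde{v}_{0}}$ (resp. $\frac{1}{1-\tilde{u}_{0}}$) used to replace $w$ by the constants appearing in the statement.
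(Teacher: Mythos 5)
Your proposal is correct and is exactly the argument the paper intends: the paper leaves this proof to the reader as a specialization of Lemmas \ref{CRCI} and \ref{CRCI2} to $A(t)\equiv 0$, and your application of Proposition \ref{LGKS} with $\tilde{\eta}_{1}=\ell_{1}$, $\tilde{\eta}_{2}=M_{0}+\ell_{2}$ (resp. $\tilde{\eta}_{1}=0$, $\tilde{\eta}_{2}=M_{0}$), together with the checks $w\leq\tilde{v}_{0}$ and $w\leq M_{0}\theta\leq\tilde{u}_{0}$ and the interval-by-interval recursion, reproduces it faithfully.
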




\section{Proof of main results}

\subsection{Proof of Theorem \ref{intermedio}}
As stated in Remark \ref{tofin}, we only have to prove  that the maps $\xi  \mapsto H(t,\xi)$ and $\nu \mapsto L(t,\nu)$ defined in the section 4 
are uniformly continuous.

\begin{lemma}
\label{end-a1} 
The map $\xi\to H(t,\xi)=\xi+\chi(t;(t,\xi))$ is uniformly continuous for any $t$.
\end{lemma}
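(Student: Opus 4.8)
The plan is to show uniform continuity of $\xi \mapsto H(t,\xi) = \xi + \chi(t;(t,\xi))$ by exploiting the integral representation \eqref{homeo-00} together with the Green function bound from Proposition \ref{cotas-green} and the continuous dependence estimate from Lemma \ref{CRCI}. Concretely, for fixed $t$ and two points $\xi,\xi'$ I would write
\begin{displaymath}
H(t,\xi)-H(t,\xi')=(\xi-\xi')-\int_{-\infty}^{\infty}\widetilde{G}(t,s)\big\{f(s,x(s,t,\xi),x(\gamma(s),t,\xi))-f(s,x(s,t,\xi'),x(\gamma(s),t,\xi'))\big\}\,ds,
\end{displaymath}
and then estimate the integral using \textbf{(A3)}, the bound $|\widetilde{G}(t,s)|\leq K\rho^{*}e^{-\alpha|t-s|}$ from \eqref{cota-imp}, and the inequality \eqref{schema1} controlling $|x(s,t,\xi)-x(s,t,\xi')|$ by $|\xi-\xi'|e^{p_{1}|s-t|}$.

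The key difficulty is that the continuous-dependence estimate \eqref{schema1} grows exponentially in $|s-t|$ with rate $p_{1}$, whereas the Green function only decays at rate $\alpha$. The integrand is therefore controlled by a term of the form $K\rho^{*}(\ell_{1}+\ell_{2})|\xi-\xi'|e^{(p_{1}-\alpha)|s-t|}$ (using Lemma \ref{EUUG} to handle the $\gamma(s)$ argument, which only costs an extra factor $e^{p_{1}\theta}$). This integral converges only when $p_{1}<\alpha$, which is \emph{not} assumed in Theorem \ref{intermedio}; so a naive global bound fails. The main obstacle, and the crux of the argument, will be to circumvent this by a splitting/truncation device: I expect to integrate over a bounded window $|s-t|\leq T$ where the exponential growth is harmless, and to control the tail $|s-t|>T$ separately using the uniform boundedness $|f|\leq \mu$ from \textbf{(A2)} together with the genuine decay of $\widetilde{G}$. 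Choosing $T$ large makes the tail contribution small (uniformly in $\xi,\xi',t$), while on the bounded window continuity of $f$ and of the flow in the initial data makes the inner contribution small once $|\xi-\xi'|$ is small.

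The steps, in order, would be: first, fix $\varepsilon>0$ and use \eqref{cota-imp} plus \textbf{(A2)} to choose $T$ so that $\int_{|s-t|>T}|\widetilde{G}(t,s)|\cdot 2\mu\,ds \leq 2K\rho^{*}(2\mu)\alpha^{-1}e^{-\alpha T}<\varepsilon/2$, uniformly in $t$; second, on the compact window $\{|s-t|\leq T\}$ apply \textbf{(A3)}, Lemma \ref{EUUG} and Lemma \ref{CRCI} to bound the contribution by
\begin{displaymath}
K\rho^{*}(\ell_{1}+\ell_{2})e^{p_{1}\theta}|\xi-\xi'|\int_{|s-t|\leq T}e^{(p_{1}-\alpha)|s-t|}\,ds,
\end{displaymath}
which is a \emph{finite} constant (depending only on $T$, not on $t$) times $|\xi-\xi'|$; third, choose $\delta$ so that $|\xi-\xi'|<\delta$ forces this second piece below $\varepsilon/2$. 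Adding the $|\xi-\xi'|$ term from the leading difference, this yields $|H(t,\xi)-H(t,\xi')|<\varepsilon$ whenever $|\xi-\xi'|$ is small, with $\delta$ independent of $t$, giving uniform continuity.

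The point I would emphasize is that the window constant in the second step is uniform in $t$ precisely because both the Green bound \eqref{cota-imp} and the flow estimate \eqref{schema1} are translation-uniform, so no $t$-dependence leaks into $\delta$; this is exactly what upgrades ordinary continuity to uniform continuity. The same argument, with Lemma \ref{CRCI2} and the representation \eqref{HomeL1} in place of \eqref{homeo-00}, will handle the map $\nu\mapsto L(t,\nu)$, so I would state that case as a parallel lemma and defer its essentially identical proof.
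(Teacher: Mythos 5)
Your proposal is correct and follows essentially the same route as the paper's proof: the paper likewise splits the integral at $t\pm L$, bounds the two tails by $\tfrac{2K\mu\rho^{*}}{\alpha}e^{-\alpha L}$ via \textbf{(A2)} and Proposition \ref{cotas-green}, and bounds the window pieces by a constant (depending on $L$ but not on $t$) times $|\xi-\xi'|$ via \textbf{(A3)}, Lemma \ref{EUUG} and Lemma \ref{CRCI}, choosing $L$ first and then $\delta$. The only cosmetic difference is that the paper freezes the factor $e^{p_{1}L}$ outside the window integral rather than keeping $e^{(p_{1}-\alpha)|s-t|}$ inside it, which changes nothing.
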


\begin{proof}
As the identity is uniformly continuous, we only need to prove that the map $\xi\to \chi(t;(t,\xi))$ is uniformly continuous.

Let $\xi$ and $\xi'$ be two initial conditions of (\ref{sistema1}). Notice that (\ref{homeo-00}) allows to say that
\begin{equation}
\label{decoupage}
\begin{array}{rcl}
\chi(t;(t,\xi))-\chi(t;(t,\xi'))&=&\displaystyle -\int_{-\infty}^{t}\widetilde{G}(t,s)\big\{f(s,x(s,t,\xi),x(\gamma(s),t,\xi))\\\\
&& -f(s,x(s,t,\xi'),x(\gamma(s),t,\xi'))\big\}\,ds\\\\
&&\displaystyle -\int_{t}^{\infty}\widetilde{G}(t,s)\big\{f(s,x(s,t,\xi),x(\gamma(s),t,\xi))\\\\
&&-f(s,x(s,t,\xi'),x(\gamma(s),t,\xi'))\big\}\,ds\\\\
&=&-I_{1}+I_{2}.
\end{array}
\end{equation}

Now, we divide $I_{1}$ and $I_{2}$ as follows:
\begin{displaymath}
I_{1}=\int_{-\infty}^{t-L}+\int_{t-L}^{t}=I_{11}+I_{12}
\quad
\textnormal{and}
\quad
I_{2}=\int_{t}^{t+L}+\int_{t+L}^{\infty}=I_{21}+I_{22},
\end{displaymath}
where $L$ is a positive constant.

By using \textbf{(A2)} combined with Proposition \ref{cotas-green}, we can see that the integrals 
$I_{11}$ and $I_{22}$ are always finite since 
\begin{displaymath}
|I_{11}|\leq 2K \rho^{*} \mu \int_{-\infty}^{t-L}e^{-\alpha(t-s)}\,ds=\frac{2K\mu \rho^{*}}{\alpha}e^{-\alpha L} 
\end{displaymath}
and
\begin{displaymath}
|I_{22}|\leq 2K\rho^{*} \mu \int_{t+L}^{\infty}e^{-\alpha(s-t)}\,ds = \frac{2K\mu \rho^{*}}{\alpha}e^{-\alpha L}. 
\end{displaymath}

Now, by \textbf{(A3)} and Proposition \ref{cotas-green}, we have that 
\begin{displaymath}
\begin{array}{rcl}
|I_{12}|&\leq & \displaystyle \int_{t-L}^{t}K\rho^{*} e^{-\alpha(t-s)}\ell_{1}|x(s,t,\xi)-x(s,t,\xi')|\,ds\\\\
&  & \displaystyle +\int_{t-L}^{t}K\rho^{*} e^{-\alpha(t-s)}\ell_{2}|x(\gamma(s),t,\xi)-x(\gamma(s),t,\xi')|\,ds\\\\
& \leq  & \displaystyle \int_{0}^{L}K\rho^{*} e^{-\alpha u}\ell_{1}|x(t-u,t,\xi)-x(t-u,t,\xi')|\,ds\\\\
&  & \displaystyle +\int_{0}^{L}K\rho^{*} e^{-\alpha u}\ell_{2}|x(\gamma(t-u),t,\xi)-x(\gamma(t-u),t,\xi')|\,ds.
\end{array}
\end{displaymath}

On the other hand, by Lemma \ref{CRCI}, we have that
\begin{displaymath}
0\leq |x(t-u,t,\xi)-x(t-u,t,\xi')|\leq |\xi-\xi'|e^{p_{1}L} \quad \textnormal{for any} \quad u\in [0,L].
\end{displaymath}

Similarly, by using Lemmatas \ref{EUUG} and \ref{CRCI}, we have that
\begin{displaymath}
0\leq |x(\gamma(t-u),t,\xi)-x(\gamma(t-u),t,\xi')|\leq |\xi-\xi'|e^{p_{1}(\theta+L)} \quad \textnormal{for any} \quad u\in [0,L].
\end{displaymath}

The reader can deduce that the inequalities above implies
\begin{equation}
\label{estimacion1}
|I_{12}|\leq D|\xi-\xi'| \quad \textnormal{with} \quad D=\frac{K\rho^{*} e^{p_{1}L}}{\alpha}(1-e^{-\alpha L})(\ell_{1}+\ell_{2}e^{p_{1}\theta}).
\end{equation}

Analogously, we can deduce that 
\begin{equation}
\label{estimacion2}
|I_{21}|\leq D|\xi-\xi'|.
\end{equation}

For any $\varepsilon>0$, we can choose 
\begin{displaymath}
L \geq \displaystyle \frac{1}{\alpha}\ln\big(\frac{8K\mu\rho^{*}}{\alpha \varepsilon}\big),
\end{displaymath}
which implies that
$|I_{11}|+|I_{22}|<\varepsilon/2$. By using this fact combined with (\ref{estimacion1})--(\ref{estimacion2}), we obtain 
that
\begin{displaymath}
\forall \varepsilon>0 \hspace{0.2cm}\exists \delta=\frac{\varepsilon}{4D}>0 \quad \textnormal{such that} \quad
|\xi-\xi'|<\delta \Rightarrow |\chi(t;(t,\xi))-\chi(t;(t,\xi'))|<\varepsilon
\end{displaymath}
and the uniform continuity follows. 
\end{proof}

\begin{lemma}
\label{end-a2}
The map $\nu\mapsto L(t,\nu)=\nu+\vartheta(t;(t,\nu))$ is uniformly continuous for any $t$. 
\end{lemma}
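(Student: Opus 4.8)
The plan is to mirror the proof of Lemma~\ref{end-a1} almost verbatim, exploiting the structural symmetry between $H$ and $L$. Since the identity map $\nu \mapsto \nu$ is uniformly continuous, it suffices to show that $\nu \mapsto \vartheta(t;(t,\nu))$ is uniformly continuous for each fixed $t$. To this end I would start from the fixed-point characterization \eqref{fijo}, which for the arguments $\nu$ and $\nu'$ gives
\begin{displaymath}
\vartheta(t;(t,\nu))-\vartheta(t;(t,\nu'))=\int_{-\infty}^{\infty}\widetilde{G}(t,s)\big\{f(s,y+\vartheta,\,y_{\gamma}+\vartheta_{\gamma})-f(s,y'+\vartheta',\,y'_{\gamma}+\vartheta'_{\gamma})\big\}\,ds,
\end{displaymath}
where $y=y(s,t,\nu)$, $\vartheta=\vartheta(s;(t,\nu))$, the subscript $\gamma$ denotes evaluation at $\gamma(s)$, and primed quantities correspond to $\nu'$. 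The key point is that this is the same Green-kernel structure as in \eqref{decoupage}, so I would split the integral as $\int_{-\infty}^{t}+\int_{t}^{\infty}$ and then further cut each piece at distance $L$ from $t$, writing $I_{1}=I_{11}+I_{12}$ and $I_{2}=I_{21}+I_{22}$ exactly as before.

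For the tail integrals $I_{11}$ and $I_{22}$ I would use \textbf{(A2)} together with the bound $|\vartheta|\le 2K\rho^{*}\mu\alpha^{-1}$ from Lemma~\ref{lemme-0-b}, which makes the integrand bounded; combined with the exponential estimate \eqref{cota-imp} of Proposition~\ref{cotas-green}, these tails are again controlled by $\tfrac{2K\mu\rho^{*}}{\alpha}e^{-\alpha L}$ and can be made smaller than $\varepsilon/2$ by taking $L$ large. For the central pieces $I_{12}$ and $I_{21}$ I would apply \textbf{(A3)} to get
\begin{displaymath}
|I_{12}|\le \int_{0}^{L}K\rho^{*}e^{-\alpha u}\big\{\ell_{1}|Y(t-u)-Y'(t-u)|+\ell_{2}|Y_{\gamma}(t-u)-Y'_{\gamma}(t-u)|\big\}\,du,
\end{displaymath}
where I abbreviate $Y=y+\vartheta$. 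Here the dependence on the initial data enters through two quantities: the linear flow difference, estimated by Lemma~\ref{CRCI2} as $|y(t-u,t,\nu)-y(t-u,t,\nu')|\le|\nu-\nu'|e^{p_{2}L}$, and the correction difference $|\vartheta(t-u;(t,\nu))-\vartheta(t-u;(t,\nu'))|$.

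The main obstacle, and the genuine difference from Lemma~\ref{end-a1}, is that $\vartheta$ is itself defined implicitly through the fixed-point equation, so $\vartheta(\cdot;(t,\nu))-\vartheta(\cdot;(t,\nu'))$ is not controlled a priori by a Gronwall estimate on the solutions of \eqref{lineal-efes}. The clean way to handle this is to bound the supremum norm $\|\vartheta(\cdot;(t,\nu))-\vartheta(\cdot;(t,\nu'))\|_{\infty}$ directly: subtracting the two fixed-point identities and using \textbf{(A3)} with Proposition~\ref{cotas-green} yields, after Lemma~\ref{EUUG} absorbs the argument shift,
\begin{displaymath}
\|\vartheta(\cdot;(t,\nu))-\vartheta(\cdot;(t,\nu'))\|_{\infty}\le \frac{2K\rho^{*}}{\alpha}\Big\{(\ell_{1}+\ell_{2})\,\|\vartheta(\cdot;(t,\nu))-\vartheta(\cdot;(t,\nu'))\|_{\infty}+(\ell_{1}+\ell_{2})\sup_{s}|y(s,t,\nu)-y(s,t,\nu')|\Big\},
\end{displaymath}
where the contraction condition \eqref{FPT} lets me move the $\vartheta$-difference term to the left-hand side. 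The remaining difficulty is that $\sup_{s}|y(s,t,\nu)-y(s,t,\nu')|$ grows like $e^{p_{2}L}$ over the window $[t-L,t+L]$, so the estimate is only useful on that window; this is precisely why the tail cutoff at $L$ is needed, and why the final argument is a two-parameter estimate. Putting the pieces together, I would obtain $|I_{12}|+|I_{21}|\le \widetilde{D}|\nu-\nu'|$ for a constant $\widetilde{D}$ depending on $L$ (analogous to $D$ in \eqref{estimacion1}), and then for any $\varepsilon>0$ fix $L$ to kill the tails and choose $\delta=\varepsilon/(4\widetilde{D})$, concluding uniform continuity exactly as in Lemma~\ref{end-a1}.
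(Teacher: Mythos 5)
Your proposal follows the paper's proof essentially verbatim: the same splitting of the fixed-point integral (\ref{fijo}) at $t\pm L$, the same tail bounds from \textbf{(A2)} and Proposition~\ref{cotas-green}, the same treatment of the central window via \textbf{(A3)}, Lemma~\ref{CRCI2} and Lemma~\ref{EUUG}, and the same absorption of the $\|\vartheta(\cdot;(t,\nu))-\vartheta(\cdot;(t,\nu'))\|_{\infty}$ term using the contraction condition (\ref{FPT}). The only wrinkle is that your intermediate display bounding this sup norm by a multiple of $\sup_{s}|y(s,t,\nu)-y(s,t,\nu')|$ over all of $\mathbb{R}$ is vacuous as written (that supremum is generically infinite under an exponential dichotomy), but you diagnose and repair this yourself by applying the Lipschitz estimate only on the window and the boundedness of $f$ on the tails, which is exactly what the paper does.
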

\begin{proof}
We only need to prove that the map $\nu\mapsto \vartheta(t;(t,\nu))$ is uniformly continuous. In order to prove that,
let $\nu$ and $\nu'$ be two initial conditions of (\ref{lineal-efes}) and define
$$
\Delta=\vartheta(t;(t,\nu))-\vartheta(t;(t,\nu')).
$$

By using (\ref{fijo}), we can see that $\Delta$ can be written as follows:
\begin{equation}
\label{decoupage2}
\begin{array}{rl}
\Delta=\\\\
&\displaystyle \int_{-\infty}^{t}\hspace{-0.2cm}\widetilde{G}(t,s)\big\{f(s,y(s,t,\nu)+\vartheta(s;(t,\nu)),
y(\gamma(s),t,\nu)+\vartheta(\gamma(s);(t,\nu)))\\\\
& -f(s,y(s,t,\nu')+\vartheta(s;(t,\nu')),y(\gamma(s),t,\nu')+\vartheta(\gamma(s);(t,\nu')))\big\}\,ds+\\\\
&\displaystyle \int_{t}^{\infty}\hspace{-0.2cm}\widetilde{G}(t,s)\big\{f(s,y(s,t,\nu)+\vartheta(s;(t,\nu)),y(\gamma(s),t,\nu)+\vartheta(\gamma(s);(t,\nu)))\\\\
&-f(s,y(s,t,\nu')+\vartheta(s;(t,\nu')),y(\gamma(s),t,\nu')+\vartheta(\gamma(s);(t,\nu')))\big\}\,ds\\\\
=&J_{1}+J_{2}.
\end{array}
\end{equation}

As before, we divide $J_{1}$ and $J_{2}$ as follows:
\begin{displaymath}
J_{1}=\int_{-\infty}^{t-\tilde{L}}+\int_{t-\tilde{L}}^{t}=J_{11}+J_{12}, \quad
J_{2}=\int_{t}^{t+\tilde{L}}+\int_{t+\tilde{L}}^{\infty}=J_{21}+J_{22}.
\end{displaymath}

By \textbf{(A2)} and Proposition \ref{cotas-green}, it is straightforward to verify that
\begin{displaymath}
|J_{11}|\leq \frac{2K\rho^{*} \mu}{\alpha}e^{-\alpha \tilde{L}} \quad \textnormal{and} \quad
|J_{22}|\leq \frac{2K\rho^{*} \mu}{\alpha}e^{-\alpha \tilde{L}}. 
\end{displaymath}

Let us define
\begin{equation}
\label{Ninfty}
||\vartheta(\cdot;(t,\nu))-\vartheta(\cdot;(t,\nu'))||_{\infty}=\sup\limits_{s\in (-\infty,\infty)}|\vartheta(s;(t,\nu))-\vartheta(s;(t,\nu'))|,
\end{equation}
and notice that \textbf{(A3)} and Proposition \ref{cotas-green} implies:
\begin{displaymath}
\begin{array}{rcl}
|J_{12}|&\leq & \displaystyle \frac{K\rho^{*}}{\alpha}(\ell_{1}+\ell_{2})||\vartheta(\cdot;(t,\nu))-\vartheta(\cdot;(t,\nu'))||_{\infty}\\\\
        & &\displaystyle +K\rho^{*} \ell_{1}\int_{t-\tilde{L}}^{t}e^{-\alpha(t-s)}|y(s,t,\nu)-y(s,t,\nu')|\,ds\\\\
        & &\displaystyle +K\rho^{*} \ell_{2}\int_{t-\tilde{L}}^{t}e^{-\alpha(t-s)}|y(\gamma(s),t,\nu)-y(\gamma(s),t,\nu')|\,ds\\\\
        &\leq & \displaystyle \frac{K\rho^{*}}{\alpha}(\ell_{1}+\ell_{2})||\vartheta(\cdot;(t,\nu))-\vartheta(\cdot;(t,\nu'))||_{\infty}\\\\
        & &\displaystyle +K\rho^{*} \ell_{1}\int_{0}^{L}e^{-\alpha u}|y(t-u,t,\nu)-y(t-u,t,\nu')|\,ds\\\\
        & &\displaystyle +K\rho^{*} \ell_{2}\int_{0}^{L}e^{-\alpha u}|y(\gamma(t-u),t,\nu)-y(\gamma(t-u),t,\nu')|\,ds.\\\\
\end{array}
\end{displaymath}

By using Lemma \ref{CRCI2}, we know that
\begin{displaymath}
|y(t-u,t,\nu)-y(t-u,t,\nu')|\leq |\nu-\nu'|e^{p_{2}\tilde{L}} \quad \textnormal{for any} \quad u\in [0,\tilde{L}]
\end{displaymath}
and by using again Lemmatas \ref{CRCI2} and \ref{EUUG}, we have
\begin{displaymath}
|y(\gamma(t-u),t,\nu)-y(\gamma(t-u),t,\nu')|\leq |\nu-\nu'|e^{p_{2}(\theta+\tilde{L})} \quad \textnormal{for any} \quad u\in [0,\tilde{L}]
\end{displaymath}
and the reader can deduce that 
\begin{displaymath}
|J_{12}|\leq \frac{K\rho^{*}}{\alpha}(\ell_{1}+\ell_{2})||\vartheta(\cdot;(t,\nu))-\vartheta(\cdot;(t,\nu'))||_{\infty}+\tilde{D}|\nu-\nu'| 
\end{displaymath}
with 
$$
\tilde{D}=\frac{K\rho^{*} e^{p_{2}L}}{\alpha}(1-e^{-\alpha \tilde{L}})(\ell_{1}+\ell_{2}e^{p_{2}\theta}),
$$
in addition, the following inequality can be proved in a similar way
\begin{displaymath}
|J_{21}|\leq \frac{K\rho^{*}}{\alpha}(\ell_{1}+\ell_{2})||\vartheta(\cdot;(t,\nu))-\vartheta(\cdot;(t,\nu'))||_{\infty}+\tilde{D}|\nu-\nu'|. 
\end{displaymath}

By using the inequalities stated above combined with (\ref{FPT}), he have
\begin{displaymath}
\begin{array}{rcl}
|\vartheta(t;(t,\nu))-\vartheta(t;(t,\nu'))|&\leq& |J_{11}|+|J_{22}|+|J_{12}|+|J_{21}|\\\\
&\leq & \displaystyle \frac{4K\rho^{*} \mu}{\alpha}e^{-\alpha \tilde{L}}+2\tilde{D}|\nu-\nu'|\\\\
& &\displaystyle  + \frac{2K\rho^{*}}{\alpha}(\ell_{1}+\ell_{2})||\vartheta(\cdot;(t,\nu))-\vartheta(\cdot;(t,\nu'))||_{\infty},
\end{array}
\end{displaymath}
and we obtain 
\begin{displaymath}
\begin{array}{rcl}
|\vartheta(t;(t,\nu))-\vartheta(t;(t,\nu'))|&\leq & \displaystyle \frac{4K\rho^{*} \mu e^{-\alpha \tilde{L}}}{\alpha(1-\Gamma^{*})}+\frac{2\tilde{D}}{1-\Gamma^{*}}|\nu-\nu'|.
\end{array}
\end{displaymath}
with $\Gamma^{*}$ defined by
\begin{displaymath}
\Gamma^{*}=\frac{2K\rho^{*}}{\alpha}(\ell_{1}+\ell_{2})<1.
\end{displaymath}

Finally, for any $\varepsilon>0$, we can choose 
\begin{displaymath}
\tilde{L} \geq \displaystyle \frac{1}{\alpha}\ln\Big(\frac{8K\mu\rho^{*}}{\alpha \varepsilon (1-\Gamma^{*})}\Big),
\end{displaymath}
which implies that
$\frac{4K\rho^{*} \mu}{\alpha(1-\Gamma^{*})}e^{-\alpha \tilde{L}} <\varepsilon/2$. By using this fact, we obtain 
that
\begin{displaymath}
\forall \varepsilon>0 \hspace{0.2cm}\exists \delta=\frac{\varepsilon}{4\tilde{D}(1-\Gamma^{*})}>0 \quad \textnormal{such that} \quad
|\nu-\nu'|<\delta \Rightarrow |\vartheta(t;(t,\nu))-\vartheta(t;(t,\nu'))|<\varepsilon
\end{displaymath}
and the uniform continuity follows. 
\end{proof}

\subsection{Proof of Theorem \ref{intermedio2}}
As before, we  only have to prove that the maps $\xi  \mapsto H(t,\xi)$ and $\nu \mapsto L(t,\nu)$ defined in the section 4 are H\"older continous.
 
\begin{lemma}
For any couple $\xi$ and $\xi'$ such that $|\xi-\xi'|<1$, there exists $C_{1}>1$ such that the map 
$\xi\to H(t,\xi)=\xi+\chi(t;(t,\xi))$ verifies
$$
|H(t,\xi)-H(t,\xi')|\leq C_{1}|\xi-\xi'|^{\frac{\alpha}{p_{1}}} \quad \textnormal{for any} \quad t\in \mathbb{R},
$$
with $p_{1}$ defined by \textnormal{(\ref{seuil})}.
\end{lemma}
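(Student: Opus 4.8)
The plan is to repeat the decomposition used in the proof of Lemma~\ref{end-a1}, but to retain the Gronwall growth \emph{pointwise} inside the central integrals instead of replacing it by its supremum over the truncation window, and then to optimise the truncation length $L$ as a function of $|\xi-\xi'|$. Since $H(t,\xi)=\xi+\chi(t;(t,\xi))$ and the identity satisfies $|\xi-\xi'|\le|\xi-\xi'|^{\alpha/p_{1}}$ whenever $|\xi-\xi'|<1$ and $\alpha/p_{1}<1$, it suffices to estimate $\chi(t;(t,\xi))-\chi(t;(t,\xi'))$. I keep the splitting $\chi(t;(t,\xi))-\chi(t;(t,\xi'))=-I_{1}+I_{2}$ with $I_{1}=I_{11}+I_{12}$ and $I_{2}=I_{21}+I_{22}$, where $I_{11},I_{22}$ are the tails over $(-\infty,t-L]$ and $[t+L,\infty)$ and $I_{12},I_{21}$ are the two central pieces of length $L$.

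For the tails I would argue exactly as before: \textbf{(A2)} together with Proposition~\ref{cotas-green} gives $|I_{11}|,|I_{22}|\le \tfrac{2K\rho^{*}\mu}{\alpha}e^{-\alpha L}$. The decisive change is in the central pieces. Applying \textbf{(A3)}, Proposition~\ref{cotas-green}, Lemma~\ref{CRCI} and Lemma~\ref{EUUG} and substituting $u=t-s$, I would obtain
$$|I_{12}|\le K\rho^{*}|\xi-\xi'|\int_{0}^{L}e^{-\alpha u}\bigl(\ell_{1}e^{p_{1}u}+\ell_{2}e^{p_{1}(\theta+u)}\bigr)\,du\le \frac{K\rho^{*}(\ell_{1}+\ell_{2}e^{p_{1}\theta})}{p_{1}-\alpha}\,e^{(p_{1}-\alpha)L}|\xi-\xi'|,$$
and the same bound for $|I_{21}|$. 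The point is to keep the factor $e^{p_{1}u}$ inside the integral rather than bounding it by $e^{p_{1}L}$, so that the dichotomy factor $e^{-\alpha u}$ produces the cancellation $e^{-\alpha u}e^{p_{1}u}=e^{(p_{1}-\alpha)u}$ and the integrable constant $1/(p_{1}-\alpha)$; both steps use $\alpha<p_{1}$.

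To finish I would balance the tail and central estimates by choosing $L=\tfrac{1}{p_{1}}\ln\!\bigl(1/|\xi-\xi'|\bigr)$, which is positive precisely because $|\xi-\xi'|<1$. This makes $e^{p_{1}L}|\xi-\xi'|=1$, so the tails give $e^{-\alpha L}=|\xi-\xi'|^{\alpha/p_{1}}$ while the central pieces give $e^{(p_{1}-\alpha)L}|\xi-\xi'|=e^{-\alpha L}=|\xi-\xi'|^{\alpha/p_{1}}$; summing the four pieces and the identity term yields $|H(t,\xi)-H(t,\xi')|\le C_{1}|\xi-\xi'|^{\alpha/p_{1}}$ with $C_{1}>1$ independent of $t$. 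The exponent lies in $(0,1)$ because hypothesis (\ref{alfa}) is exactly $\alpha<\min\{p_{1},p_{2}\}$, whose first branch reads $\alpha<p_{1}$. The one delicate point --- and the reason the exponent is the sharp $\alpha/p_{1}$ rather than the weaker $\alpha/(\alpha+p_{1})$ that the cruder bound of Lemma~\ref{end-a1} would give --- is precisely this pointwise tracking of $e^{p_{1}u}$; everything else is a rerun of the uniform-continuity argument plus the single optimisation of $L$.
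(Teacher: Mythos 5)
Your proposal is correct and follows essentially the same route as the paper: the same decomposition from (\ref{decoupage}), the same truncation length $\tfrac{1}{p_{1}}\ln(1/|\xi-\xi'|)$, the same pointwise use of Lemma \ref{CRCI} together with Lemma \ref{EUUG} inside the central integrals to produce the factor $e^{(p_{1}-\alpha)(s-t)}$, and the same appeal to $\alpha<p_{1}$ from (\ref{alfa}). The paper's proof is exactly this optimisation of the truncation parameter, yielding the same exponent $\alpha/p_{1}$ and an explicit constant $C_{1}$.
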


\begin{proof}
As before, we only need to prove that the map $\xi\mapsto \chi(t;(t,\xi))$ is uniformly continuous. Now, we use the
the identity 
\begin{displaymath}
\begin{array}{rcl}
\chi(t;(t,\xi))-\chi(t;(t,\xi'))&=&-I_{1}+I_{2},
\end{array}
\end{displaymath}
described by (\ref{decoupage}). Nevertheless, this time we consider the intervals $I_{1}$ and $I_{2}$:
\begin{displaymath}
I_{1}=\int_{-\infty}^{t-T}+\int_{t-T}^{t}=I_{11}+I_{12}, \quad
I_{2}=\int_{t}^{t+T}+\int_{t+T}^{\infty}=I_{21}+I_{22},
\end{displaymath}
where
\begin{equation}
\label{threshold}
T=\frac{1}{p_{1}}\ln\Big(\frac{1}{|\xi-\xi'|}\Big).
\end{equation}

The reader can easily verify that
\begin{equation}
\label{T2}
e^{-\alpha T}=|\xi-\xi'|^{\frac{\alpha}{p_{1}}} \quad \textnormal{and} \quad e^{p_{1}T}=|\xi-\xi'|^{-1}, 
\end{equation}
which combined with (\ref{cota-imp}) implies that
\begin{displaymath}
\displaystyle |I_{11}|\leq \frac{2\mu K\rho^{*}}{\alpha}|\xi-\xi'|^{\frac{\alpha}{p_{1}}} \quad \textnormal{and} \quad
|I_{22}|\leq \frac{2\mu K\rho^{*}}{\alpha}|\xi-\xi'|^{\frac{\alpha}{p_{1}}}
\end{displaymath}

By using \textbf{(A3)}, Proposition \ref{cotas-green} and Lemma \ref{CRCI}, we have that 
\begin{displaymath}
\begin{array}{rcl}
|I_{21}|&\leq & \displaystyle \int_{t}^{t+T}K\rho^{*} e^{-\alpha(s-t)}\ell_{1}|x(s,t,\xi)-x(s,t,\xi')|\,ds\\\\
&  & \displaystyle +\int_{t}^{t+T}K\rho^{*} e^{-\alpha(s-t)}\ell_{2}|x(\gamma(s),t,\xi)-x(\gamma(s),t,\xi')|\,ds\\\\
& \leq & \displaystyle |\xi-\xi'|K\rho^{*}\ell_{1}  \int_{t}^{t+T}e^{(p_{1}-\alpha)(s-t)}\,ds\\\\
& & +\displaystyle |\xi-\xi'|K\rho^{*}\ell_{2}  \int_{t}^{t+T}e^{-\alpha(s-t)}e^{p_{1}|\gamma(s)-t|}\,ds.
\end{array}
\end{displaymath}

By using Lemma \ref{EUUG}, we can see that
\begin{displaymath}
\begin{array}{rcl}
|I_{21}|&\leq &\displaystyle  \big\{\ell_{1}+\ell_{2}e^{p_{1}\theta}\big\}|\xi-\xi'|K\rho^{*} \int_{t}^{t+T}e^{(p_{1}-\alpha)(s-t)}\,ds.
\end{array}
\end{displaymath}

Now, by (\ref{alfa}), we have that $p_{1}>\alpha$. By using this fact combined with (\ref{T2}), we obtain:
\begin{displaymath}
\begin{array}{rcl}
|I_{21}|\leq \displaystyle \frac{K\rho^{*}}{p_{1}-\alpha} \big\{\ell_{1}+\ell_{2}e^{p_{1}\theta}\big\}|\xi-\xi'|^{\frac{\alpha}{p_{1}}}.
\end{array}
\end{displaymath}

A similar estimation can be obtained for $I_{12}$:
\begin{displaymath}
\begin{array}{rcl}
|I_{12}|\leq \displaystyle \frac{K\rho^{*}}{p_{1}-\alpha} \big\{\ell_{1}+\ell_{2}e^{p_{1}\theta}\big\}|\xi-\xi'|^{\frac{\alpha}{p_{1}}}.
\end{array}
\end{displaymath}

Finally, as $\alpha<p_{1}$ and $|\xi-\xi'|<1$, we can conclude that 
\begin{displaymath}
\begin{array}{rcl}
|H(t,\xi)-H(t,\xi')|&\leq & |\xi-\xi'|+|\chi(t;(t,\xi))-\chi(t;(t,\xi'))|\\\\
&\leq  & \displaystyle \Big(1+\frac{2K\rho^{*}}{p_{1}-\alpha} \big\{\ell_{1}+\ell_{2}e^{p_{1}\theta}\big\}+\frac{4\mu K\rho^{*}}{\alpha}\Big)|\xi-\xi|^{\frac{\alpha}{p_{1}}}.
\end{array}
\end{displaymath}
\end{proof}


\begin{lemma}
For any couple $\nu$ and $\nu'$ such that $|\nu-\nu'|<1$, there exists $D_{1}>1$ such that 
the map $\xi\to L(t,\xi)=\xi+\vartheta(t;(t,\nu))$ verifies
\begin{displaymath}
|L(t,\nu)-L(t,\nu')|\leq D_{1}|\nu-\nu'|^{\frac{\alpha}{p_{2}}}.
\end{displaymath}

\end{lemma}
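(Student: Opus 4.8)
The plan is to follow the decomposition of Lemma \ref{end-a2} verbatim, but to replace the uniform-continuity threshold $\tilde{L}$ by the $\nu$--dependent H\"older threshold used for $H$ in the preceding lemma. Since the identity map is Lipschitz and $|\nu-\nu'|<1$ together with $\alpha/p_2<1$ gives $|\nu-\nu'|\le|\nu-\nu'|^{\alpha/p_2}$, it suffices to establish the H\"older bound for the map $\nu\mapsto\vartheta(t;(t,\nu))$. I would start from the fixed-point identity (\ref{fijo}) and write $\Delta=\vartheta(t;(t,\nu))-\vartheta(t;(t,\nu'))=J_1+J_2$ exactly as in (\ref{decoupage2}), where $J_1$ and $J_2$ integrate the difference of $f$ against $\widetilde{G}(t,s)$ over $(-\infty,t)$ and $(t,\infty)$ respectively.

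The new ingredient is the splitting $J_1=J_{11}+J_{12}$, $J_2=J_{21}+J_{22}$ at the threshold $\tilde{T}=p_2^{-1}\ln(1/|\nu-\nu'|)$, the exact analogue of (\ref{threshold}) with $p_1$ replaced by $p_2=M+M_0e^{M\theta}/(1-\tilde{v})$ from (\ref{schema1B}), so that $e^{-\alpha\tilde{T}}=|\nu-\nu'|^{\alpha/p_2}$ and $e^{p_2\tilde{T}}=|\nu-\nu'|^{-1}$ as in (\ref{T2}). For the far tails $J_{11}$ and $J_{22}$ I would invoke \textbf{(A2)} and Proposition \ref{cotas-green} to bound each by $2\mu K\rho^{*}\alpha^{-1}e^{-\alpha\tilde{T}}=2\mu K\rho^{*}\alpha^{-1}|\nu-\nu'|^{\alpha/p_2}$. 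For the near parts $J_{12}$ and $J_{21}$ I would use \textbf{(A3)} and Proposition \ref{cotas-green}; here the argument of $f$ produces two kinds of terms: those carrying the linear-solution difference $|y(s,t,\nu)-y(s,t,\nu')|$ and those carrying the bounded-solution difference $|\vartheta(s;(t,\nu))-\vartheta(s;(t,\nu'))|$ (together with their values at $\gamma(s)$). The linear terms are controlled by Lemma \ref{CRCI2} and Lemma \ref{EUUG} just as the $y$--terms in Lemma \ref{end-a2}; because (\ref{alfa}) forces $\alpha<p_2$, the integral $\int_0^{\tilde{T}}e^{(p_2-\alpha)u}\,du\le(p_2-\alpha)^{-1}e^{(p_2-\alpha)\tilde{T}}$, which after multiplication by the prefactor $|\nu-\nu'|$ collapses to a constant times $|\nu-\nu'|^{\alpha/p_2}$.

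The main obstacle is the self-referential $\vartheta$--contribution. The terms involving $|\vartheta(s;(t,\nu))-\vartheta(s;(t,\nu'))|$ cannot be estimated pointwise, so I would bound them on each of $J_{12}$ and $J_{21}$ by $K\rho^{*}\alpha^{-1}(\ell_1+\ell_2)||\vartheta(\cdot;(t,\nu))-\vartheta(\cdot;(t,\nu'))||_{\infty}$, using the sup-norm (\ref{Ninfty}). Collecting everything yields, for every $t$,
\begin{displaymath}
|\Delta|\le\Big(\frac{4\mu K\rho^{*}}{\alpha}+\frac{2K\rho^{*}}{p_2-\alpha}(\ell_1+\ell_2 e^{p_2\theta})\Big)|\nu-\nu'|^{\frac{\alpha}{p_2}}+\Gamma^{*}||\vartheta(\cdot;(t,\nu))-\vartheta(\cdot;(t,\nu'))||_{\infty},
\end{displaymath}
with $\Gamma^{*}=2K\rho^{*}\alpha^{-1}(\ell_1+\ell_2)<1$ by (\ref{FPT}).

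The delicate step is the absorption: since the left-hand side equals $|\vartheta(t;(t,\nu))-\vartheta(t;(t,\nu'))|$ for arbitrary $t$, I would take the supremum over $t$ and move $\Gamma^{*}||\cdot||_{\infty}$ to the left, obtaining $||\vartheta(\cdot;(t,\nu))-\vartheta(\cdot;(t,\nu'))||_{\infty}\le(1-\Gamma^{*})^{-1}(\cdots)|\nu-\nu'|^{\alpha/p_2}$. It is precisely the contraction hypothesis (\ref{FPT}) that renders the implicit dependence of $\vartheta$ on itself harmless; this is the only nonroutine point. Finally I would add the identity contribution and use $|\nu-\nu'|\le|\nu-\nu'|^{\alpha/p_2}$ to read off
\begin{displaymath}
|L(t,\nu)-L(t,\nu')|\le D_1|\nu-\nu'|^{\frac{\alpha}{p_2}},\qquad D_1=1+\frac{1}{1-\Gamma^{*}}\Big(\frac{4\mu K\rho^{*}}{\alpha}+\frac{2K\rho^{*}}{p_2-\alpha}(\ell_1+\ell_2 e^{p_2\theta})\Big)>1,
\end{displaymath}
so that $D_2=\alpha/p_2\in(0,1)$, which completes the proof and, together with the companion estimate for $H$, finishes Theorem \ref{intermedio2}.
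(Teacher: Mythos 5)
Your proposal is correct and follows essentially the same route as the paper's own proof: the same splitting of (\ref{decoupage2}) at the threshold $\tilde{T}=p_{2}^{-1}\ln(1/|\nu-\nu'|)$, the same tail and near-part estimates via \textbf{(A2)}, \textbf{(A3)}, Proposition \ref{cotas-green}, Lemmas \ref{CRCI2} and \ref{EUUG}, and the same absorption of the $\Gamma^{*}\|\vartheta(\cdot;(t,\nu))-\vartheta(\cdot;(t,\nu'))\|_{\infty}$ term using (\ref{FPT}). Your final constant even keeps the factor $\rho^{*}$ in the tail contribution $4\mu K\rho^{*}/\alpha$, which the paper drops, so your version is if anything slightly more careful.
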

\begin{proof}
As in the previous proof, we will start by studying the map $\nu \to \vartheta(t;(t,\nu))$. Let us
recall the identity 
\begin{displaymath}
\begin{array}{rcl}
|\vartheta(t;(t,\nu))-\vartheta(t;(t,\nu'))|&=&J_{1}+J_{2},
\end{array}
\end{displaymath}
described by (\ref{decoupage2}). As before, we divide $J_{1}$ and $J_{2}$ as follows:
\begin{displaymath}
J_{1}=\int_{-\infty}^{t-\tilde{T}}+\int_{t-\tilde{T}}^{t}=J_{11}+J_{12}, \quad
J_{2}=\int_{t}^{t+\tilde{T}}+\int_{t+\tilde{T}}^{\infty}=J_{21}+J_{22},
\end{displaymath}
with $\tilde{T}$ defined by
\begin{displaymath}
\tilde{T}=\frac{1}{p_{2}}\ln\Big(\frac{1}{|\nu-\nu'|}\Big).
\end{displaymath}

The inequalities
\begin{displaymath}
|J_{11}|\leq \frac{2\mu K}{\alpha}|\nu-\nu'|^{\frac{\alpha}{p_{2}}} \quad \textnormal{and}\quad
|J_{22}|\leq \frac{2\mu K}{\alpha}|\nu-\nu'|^{\frac{\alpha}{p_{2}}}
\end{displaymath}
can be proved analogously as before.

By using \textbf{(A3)} combined with Proposition \ref{cotas-green} and Lemma \ref{CRCI2}, we can deduce that
\begin{displaymath}
\begin{array}{rcl}
|J_{12}|&\leq & \displaystyle \int_{t-\tilde{T}}^{t}K\rho^{*} e^{-\alpha(t-s)}\ell_{1}|y(s,t,\nu)-y(s,t,\nu')|\,ds\\\\
&  & \displaystyle +\int_{t-\tilde{T}}^{t}K\rho^{*} e^{-\alpha(t-s)}\ell_{2}|y(\gamma(s),t,\nu)-y(\gamma(s),t,\nu')|\,ds\\\\
&  & \displaystyle +\int_{t-\tilde{T}}^{t}K\rho^{*} e^{-\alpha(t-s)}\ell_{1}|\vartheta(s;(t,\nu))-\vartheta(s;(t,\nu'))|\,ds\\\\
&  & \displaystyle +\int_{t-\tilde{T}}^{t}K\rho^{*} e^{-\alpha(t-s)}\ell_{2}|\vartheta(\gamma(s);(t,\nu))-\vartheta(\gamma(s);(t,\nu'))|\,ds\\\\
& \leq & \displaystyle \frac{K\rho^{*}}{p_{2}-\alpha} \Big\{\ell_{1}+\ell_{2}e^{p_{2}\theta}\Big\}|\nu-\nu'|^{\frac{\alpha}{p_{2}}}+\frac{K\rho^{*}}{\alpha}(\ell_{1}+\ell_{2})||\vartheta(\cdot;(t,\nu))-\vartheta(\cdot;(t,\nu'))||_{\infty},
\end{array}
\end{displaymath}
where $p_{2}>\alpha$ is consequence of (\ref{alfa}). Let us recall that $||\vartheta(\cdot;(t,\nu))-\vartheta(\cdot;(t,\nu'))||_{\infty}$ is defined by (\ref{Ninfty}).

Similarly, we can deduce that
\begin{displaymath}
\begin{array}{rcl}
|J_{21}|&\leq & \displaystyle \frac{K\rho^{*}}{p_{2}-\alpha} \Big\{\ell_{1}+\ell_{2}e^{p_{2}\theta}\Big\}|\nu-\nu'|^{\frac{\alpha}{p_{2}}}+
\frac{K\rho^{*}}{\alpha}(\ell_{1}+\ell_{2})||\vartheta(\cdot;(t,\nu))-\vartheta(\cdot;(t,\nu'))||.
\end{array}
\end{displaymath}

Then, we obtain
\begin{displaymath}
\begin{array}{rcl}
|\vartheta(t;(t,\nu))-\vartheta(t;(t,\nu'))| &\leq & |J_{11}|+|J_{12}|+|J_{21}|+|J_{22}|\\\\
& \leq &\displaystyle \frac{2K\rho^{*}}{p_{2}-\alpha} \Big\{\ell_{1}+\ell_{2}e^{p_{2}\theta}\Big\}|\nu-\nu'|^{\frac{\alpha}{p_{2}}}+
\frac{4\mu K}{\alpha}|\nu-\nu'|^{\frac{\alpha}{p_{2}}}\\\\
& &+\displaystyle\frac{2K\rho^{*}}{\alpha}(\ell_{1}+\ell_{2})||\vartheta(\cdot;(t,\nu))-\vartheta(\cdot;(t,\nu'))||_{\infty},
\end{array}
\end{displaymath}
which implies that
\begin{displaymath}
\begin{array}{rcl}
||\vartheta(\cdot;(t,\nu))-\vartheta(\cdot;(t,\nu'))||_{\infty} &\leq & \displaystyle \Big(\frac{2K\rho^{*}}{p_{2}-\alpha} \Big\{\ell_{1}+\ell_{2}e^{p_{2}\theta}\Big\}+
\frac{4\mu K}{\alpha}\Big)|\nu-\nu'|^{\frac{\alpha}{p_{2}}}\\\\
& &+\displaystyle\frac{2K\rho^{*}}{\alpha}(\ell_{1}+\ell_{2})||\vartheta(\cdot;(t,\nu))-\vartheta(\cdot;(t,\nu'))||_{\infty}.
\end{array}
\end{displaymath}

Now, by using (\ref{FPT}), we conclude that
\begin{displaymath}
|\vartheta(t;(t,\nu))-\vartheta(t;(t,\nu'))|\leq \big(1-\Gamma^{*}\big)^{-1}\Big( \frac{2K\rho^{*}}{p_{2}-\alpha} \Big\{\ell_{1}+\ell_{2}e^{p_{2}\theta}\Big\}+\frac{4\mu K}{\alpha}\Big)|\nu-\nu'|^{\frac{\alpha}{p_{2}}},
\end{displaymath}
and the inequality $p_{2}>\alpha$ combined with $|\nu-\nu'|<1$ allows to deduce
\begin{displaymath}
\begin{array}{rcl}
|L(t,\nu)-L(t,\nu')|&\leq & |\nu-\nu'|+|\vartheta(t;(t,\nu))-\vartheta(t;(t,\nu'))|\\\\
&\leq& \displaystyle \Bigg(1+\frac{\displaystyle \frac{2K\rho^{*}}{p_{2}-\alpha} \big(\ell_{1}+\ell_{2}e^{p_{2}\theta}\big)+\frac{4\mu K}{\alpha}}
{1-\Gamma^{*}}\Bigg)|\nu-\nu'|^{\frac{\alpha}{p_{2}}}
\end{array}
\end{displaymath}
and the result follows.

\end{proof}




-



\end{document}